\documentclass{amsart}

\usepackage{graphicx}%
\usepackage{multirow}%
\usepackage{amsmath,amssymb,amsfonts}%
\usepackage{amsthm}%
\usepackage{mathrsfs}%
\usepackage[title]{appendix}%
\usepackage{xcolor}%
\usepackage{textcomp}%
\usepackage{manyfoot}%
\usepackage{algorithm}%
\usepackage{algorithmicx}%
\usepackage{algpseudocode}%
\usepackage{listings}%
\usepackage{longtable}
\usepackage{tabularx}
\usepackage{booktabs}
\usepackage{tikz-cd}

\usepackage{natbib}
\newtheorem{theorem}{Theorem}[section]
\newtheorem{proposition}[theorem]{Proposition}
\newtheorem{lemma}[theorem]{Lemma}
\newtheorem{corollary}[theorem]{Corollary}

\theoremstyle{definition}

\newtheorem{definition}{Definition}[section]
\newtheorem{example}{Example}[section]

\theoremstyle{remark}
\newtheorem{remark}[definition]{Remark}

\newcommand{\forkindep}[1][]{%
	\mathrel{
		\mathop{
			\vcenter{
				\hbox{\oalign{\noalign{\kern-.3ex}\hfil$\vert$\hfil\cr
						\noalign{\kern-.7ex}
						$\smile$\cr\noalign{\kern-.3ex}}}
			}
		}\displaylimits_{#1}
	}
}
\newcommand{\dotminus}{\mathbin{\dot{-}}}

\numberwithin{equation}{section}

\begin{document}
	
	\author[Haoming Wang]{Haoming Wang}
	\title{Continuous First-Order Logic of Abstract Harmonic Spaces}
	\address{Center for Combinatorics, Nankai University, Tientsin 300071, China}
	\email{wanghm37@nankai.edu.cn}
	\urladdr{https://blueairm.github.io/}
	
	\subjclass[2020]{Primary 03C66; Secondary 31D05, 03C10.}
	\keywords{Martin compactification, Harmonic measure, Keisler measure, Riemann surface}
	
	\begin{abstract}
		%This paper studies the Brelot theory of abstract harmonic spaces and its language expansions within an unbounded many-sorted continuous first-order syntax. The results are divided into four parts.  (1) This theory together with finite interpolation schemes is shown to eliminate harmonic-variable quantifiers on finite evaluation fragments. On regular domains, the classical Dirichlet solution operator gives a definable Skolem function, allowing the auxiliary boundary sorts to be eliminated. (2) On a Greenian domain, after fixing a reference point, the Martin compactification is homeomorphic to a compact subspace of the local type space, and the Martin boundary corresponds precisely to non-principal Martin types. Consequently, there exists a Keisler measure supported on the minimal Martin types with complete lifting.  (3) Harnack compactness makes normalized local evaluation formulas stable. It therefore yields canonical bases so that the harmonic barycenter associated with a complete lift depends only on its underlying local measure. (4) First-jet expansions provide continuous logic formulations of Rad\'o--Kneser--Choquet and Lewy rigidity phenomena, and of thinness, polarity, and fine openness. A forking chain of Lebesgue thorn is constructed by this means.
		This paper studies Brelot harmonic spaces within an unbounded
		many-sorted continuous first-order language and its expansions.
		
		\begin{enumerate}
			\item In the finite evaluation reduct of the harmonic language, the Brelot theory together with the finite interpolation scheme eliminates bounded harmonic quantifiers while leaving point quantifiers untouched. 
			
			%Two further results develop model-theoretic counterparts of classical potential theory. 
			\item The Martin compactification is homeomorphic to a compact subset of the local type space, with the Martin boundary corresponding to non-principal types. 
			
			\item If the compact convex space of normalized positive harmonic functions is a Choquet simplex, then each Martin representing measure on the minimal boundary corresponds to a unique Keisler measure supported on minimal types and admits a complete lift.
			
			\item Harnack compactness also implies stability of normalized evaluation formulas, yielding canonical bases and a harmonic barycenter factoring through Keisler measures.
		\end{enumerate}
		
		These constructions are further applied to fine potential theory, where boundary poles of potentials are Keisler null. First-jet expansions encode the Rad\'o--Kneser--Choquet and Lewy planar rigidity phenomena, while higher-dimensional failures are analyzed through thorn-forking chains in o-minimal expansions.
		%compactness implies that normalized evaluation formulas are stable, yielding canonical bases and a harmonic barycenter factoring through local Keisler measures. Applying these results to fine potential theory, polarity is identified with Keisler measure zero. This paper also tries to encode Rad\'o--Kneser--Choquet and Lewy planar rigidity phenomena, and other higher-dimensional failure examples are finally analyzed by dividing and forking chains.
	\end{abstract}

	\maketitle
	
	\section{Introduction}
	
	Axiomatic potential theory, initiated by M. Brelot \cite{Brelot1960}, organizes harmonic functions on a locally compact, locally connected Hausdorff space. At its heart lies the interaction among harmonic sheaves, superharmonic functions, and balayage (sweeping) operators. Further developed by Constantinescu and Cornea \cite{CC1963axiomatic} and Bauer \cite{bauer2006harmonische}, this theory provides an axiomatic setting for the Dirichlet problem and Martin-boundary theory.
	
	The Martin representation theorem \cite{Martin1941harmonic} associates with a Greenian domain $U$ and a reference point $x_0\in U$ a minimal Martin boundary $\partial_m U$ whose points parameterize normalized minimal positive harmonic functions. Under the Choquet-simplex assumption \cite{Choquet1954capacity}, every positive harmonic function $h$ on $U$ admits a unique representation by a positive Borel measure $\mu_h$ on $\partial_m U$. By comparison, if $U$ is a regular domain and $g\in C(\partial U)$, the solution of the classical Dirichlet problem is represented by harmonic measure
	\begin{equation}
		h(x)
		=
		\int_{\partial U}g(y)\omega^x(\mathrm{d}y),
		\quad x\in U.
		\label{eq:Dirichlet-problem}
	\end{equation}
	Here $g$ is the prescribed continuous boundary datum, and $h$
	extends continuously to $\partial U$ with boundary values $g$. When the minimal Martin boundary can be identified with the topological boundary and the harmonic measures with poles $x$ and $x_0$ are mutually absolutely continuous,
	%% equivalent to connectedness
	the normalized Martin kernel is given by
	$K(x,y)=\mathrm{d}\omega^x/\mathrm{d}\omega^{x_0}(y)$
	for $\omega^{x_0}$-almost all $y\in\partial_m U$. In this case, the Martin
	representing measure of the positive Dirichlet solution is
	$\mu_h(\mathrm{d}y)=g(y)\omega^{x_0}(\mathrm{d}y)$.

	The analytic depth of Brelot harmonic spaces is characterized by the sheaf axiom, regular basis axiom, and the monotone convergence axiom scheme, the latter of which was shown to be equivalent to Harnack inequalities by Loeb and Walsh \cite{LW1965axiomatic}. The subsequent development of fine topology and boundary limits by El Kadiri and Fuglede \cite{ElKadiriFuglede2025} further connects excessive (superharmonic) functions to Markov processes \cite{Dynkin1962finetopo,doob1984classical}. Nevertheless, the classical treatment of balayage and polar sets remains tethered to heavy measure-theoretic machinery, most notably the capacity introduced by Choquet \cite{Choquet1954capacity}.
	
	Modern continuous first-order logic provides a natural syntax for
	metric structures carrying uniformly continuous functions and
	predicates. Its applications include group actions,
	operator algebras, and other analytic structures
	\cite{Berenstein2024freegroup,Dabrowski2019vonNeumann,GoldbringHoudayer2022wstar,Henson2017Gurarij}. The unbounded version
	is particularly well suited to harmonic function spaces equipped with
	compact-open metrics and gauges \cite{BenYaacov2008Unbounded}. Recent work by Miller
	\cite{Miller2026blms} and by Borgard and Miller \cite{Borgard_Miller_2026} concerns harmonic functions definable in o-minimal expansions of the real field. Rather than fixing one tame real structure, we treat the harmonic sheaf itself as a family of metric sorts and ask which classical constructions can be identified with local types, definable predicates, or measures.
	
	%To date, continuous model theory has emerged as a comprehensive language for metric structures, starting from Chang and Keisler \cite{ChangandKeisler1968continuousmodeltheory}, and developed by Ben Yaacov, Usvyatsov, Pedersen, Hanson \cite{Yaacov2007dfinite,Yaacov2008CONTINUOUSFO,Yaacov2010completeness,Hanson2025StronglyMinimalContinuousLogic} and others. In this setting, classical binary truth values $\{0,1\}$ are replaced by the unit $[0,1]$ or possibly an unbounded interval. Its applications now range from group actions \cite{Berenstein2024freegroup}, operator algebras \cite{Dabrowski2019vonNeumann,GoldbringHoudayer2022wstar} to many others \cite{Henson2017Gurarij}. However, the abstract harmonic space itself, despite providing canonical examples of metric structures, has remained relatively unexplored. Miller \cite{Miller2026blms} recently studied definable harmonic functions in an o-minimal expansion, but the abstract theory introduced here attempts to understand harmonicity in a different many-sorted formalism. Precisely, it asks which classical constructions identify with local types, definable predicates, or measures.

	In Section 2 and throughout this article, we fix a minimal harmonic language $\mathcal L_{\mathrm{Har}}$ in which the point space is a single sort, while regular domains form an external partially ordered set indexing the harmonic and boundary sheaf sorts. %Two nested theories and a scheme are mainly considered here: %the theory of Brelot harmonic space (BHS), the theory of non-trivial connected Brelot harmonic space (CBHS), and the finite interpolation scheme (FI). %and the theory of $n$-dimensional connected Brelot harmonic space (CBHS$_n$), $n\ge 2$. 
	The theory BHS of Brelot harmonic spaces consists of the sheaf, regular-basis, monotone-convergence, and compatibility axioms. Thus BHS describes pure Brelot structures, including Euclidean spaces, Riemann surfaces, and graphs. %By adding connectedness, CBHS rules out all models of discrete simple graphs. %The additional dimensionality assumption $n\ge2$ on CBHS$_n$ forces all infinite models of graphs to be impossible. %However, we may confront some tricky problems because a lot of familiar topological properties are unfortunately not first-order sentences in a single-sorted language. Thus, it urges a many-sorted language where a basis of open sets can be used for indexing sorts. This might require a general approach to axiomatizing harmonic function spaces, but we still choose it as our starting point. 
	The finite-interpolation scheme FI is an auxiliary axiom scheme expressing the abundance of harmonic functions. In conjunction with BHS, it yields relative elimination of harmonic quantifiers on finite-evaluation fragments: bounded harmonic quantifiers are eliminated, while point quantifiers remain untouched. For example, Euclidean spaces of dimension $n\ge2$ provide a broad class of harmonic functions satisfying finite interpolation.
	
	For analytic spaces, expanded languages usually have greater expressive power. We add two useful collections of symbols, Bal and Mar, to the minimal language. The Bal expansion introduced in Section 3 names balayage as a transfer-sort function, so the Dirichlet solution operator becomes a definable Skolem function. The Mar predicates introduced in Section 4 are designed for Greenian domains. In the resulting expansion, the Martin compactification is homeomorphic to a compact subset of local types determined by the compact-open closure of Cauchy nets, and the Martin boundary corresponds exactly to the non-principal types. Under the Choquet-simplex assumption, every Martin representing measure on the minimal boundary is identified with a unique Keisler measure supported on minimal types. Such a local measure admits complete lifts, whose collection is a nonempty compact convex subset of the space of probability measures. We call these correspondences the first and second isomorphism theorems.
	
	The results in Section 5 concern local
	stability. Harnack compactness implies stability of normalized evaluation formulas on compact subsets. The gluing technique for stable formulas by Ben Yaacov and Usvyatsov \cite{Yaacov2008CONTINUOUSFO} then yields an
	independence calculus for the resulting fixed stable family. Its
	canonical bases admit harmonic barycenters, and the barycenter
	associated with a complete Keisler lift depends only on the underlying local measure.

	A further smooth first-jet expansion is considered in Section 6 and applied to the Rad\'o--Kneser--Choquet and Lewy planar rigidity phenomena, together with their limitations in dimensions $n\ge3$. Boundary poles of potentials are shown to be null for the
	corresponding Keisler measures. Wood's counterexample and the classical Lebesgue thorn illustrate failures of rigidity and boundary regularity whose form is particularly sensitive to dimension. The latter also provides a concrete test case for the associated forking-independence phenomena.

	%This is because predicates are continuous real-valued functions in continuous first-order logic. This place needs a bit of caution. In classical first-order theory, values assigned to each formula share a common bound. Intuitively, a continuous function may be unbounded, while we have noticed that \cite{BenYaacov2008Unbounded} already deals with unbounded metric structures.  

	\section{Abstract harmonic spaces}
	In this section, we present the abstract harmonic spaces introduced by Brelot and formalize their theory in continuous first-order logic.
	
	\subsection{Abstract harmonic spaces}
	Let $(X,\tau)$ be a nonempty, locally compact, locally connected Hausdorff space.
	\begin{definition}[Presheaf]
		A presheaf $E=(E(U),\rho_{V,U})$ on $(X,\tau)$
		consists of the following data:
		\begin{itemize}
			\item for every open set $U\in\tau$, a set $E(U)$, and
			\item for every pair of open sets $V\subseteq U$, a restriction map
			$\rho_{V,U}:E(U)\to E(V)$.
		\end{itemize}
		These data are required to satisfy the following two conditions
		\begin{enumerate}
			\item (Identity) for every open set $U\in\tau$,
			$\rho_{U,U}=\operatorname{id}_{E(U)}$, and
			\item (Morphism) whenever $W\subseteq V\subseteq U$ are open sets,
			$\rho_{W,U}=\rho_{W,V}\circ\rho_{V,U}$.
		\end{enumerate}
		If $f\in E(U)$ and $V\subseteq U$, we write $\rho_{V,U}(f)=f|_V$.
	\end{definition}
	
	\begin{definition}[Sheaf]
		A presheaf $E=(E(U),\rho_{V,U})$ is a sheaf if it satisfies two additional
		conditions:
		\begin{enumerate}
			\item[(3)] (Locality) if $f,g\in E(\cup_{\alpha} U_{\alpha})$ agree on each open set $U_\alpha$, then $f=g$ on $\cup_{\alpha} U_{\alpha}$, and
			
			\item[(4)] (Gluing) if $f_\alpha\in E(U_\alpha)$ and the sections agree on every overlap $f_\alpha|_{U_\alpha\cap U_\beta}=f_\beta|_{U_\alpha\cap U_\beta}$, then there is a unique $f\in E(\cup_{\alpha}U_{\alpha})$ with $f|_{U_\alpha}=f_\alpha$ for every $\alpha$.
		\end{enumerate}
	\end{definition}
	
	%\begin{remark}   The full sheaf axiom is an analytic condition on the intended harmonic structures and quantifies over arbitrary open covers, so it is not a first-order sentence. Its finite part is nevertheless first-order. For every named finite cover, agreement of finitely many sections on pairwise overlaps and their gluing to a section can be expressed by a first-order scheme.\end{remark}

	For example, continuous functions on $(X,\tau)$ with the usual restriction maps form a sheaf.

	Let $U\in\tau$. Recall that an ultrafilter $\mathscr U$ on $U$ converges to $y\in\partial U$ if, for every open neighborhood $V$ of $y$, the intersection $V\cap U$ belongs to $\mathscr U$. A set $K$ is relatively compact in $U$ if its closure $\overline K$ is compact and contained in $U$, written $K\Subset U$. If $K\Subset X$, we simply call $K$ relatively compact. Suppose that $H=(H(U),\rho_{V,U})$ is a sheaf whose elements $h\in H(U)$ are called harmonic functions on $U$.
	
	\begin{definition}[Regular basis]
		A relatively compact open set $U$ is called regular if for every continuous function $g$ on $\partial U$ there exists a unique harmonic function $h$ on $U$ such that for every $y\in\partial U$ and every ultrafilter $\mathscr{U}$ on $U$ that converges to $y$ with respect to $\tau$, one has
		$$
		\lim_{\mathscr{U}\to y} h = g(y),
		$$
		and moreover $h\ge0$ whenever $g\ge0$. The function $h$ corresponding to $g$ is called the Dirichlet solution. A regular basis $\tau_0$ is a family of relatively compact regular open sets that forms a base for $\tau$.
	\end{definition}
	
	Since $X$ is locally compact and Hausdorff, every
	$g\in C(\partial U)$ admits, by the Tietze extension theorem, a
	bounded continuous extension to the compact closure of some relatively compact neighborhood of $\overline U$. Such an extension is generally not unique. Regularity of $U$, however, guarantees the existence of a unique continuous harmonic extension
	$h=D_U(g)\in H(U)\cap C(\overline U)$.
	
	We illustrate some examples where a regular basis exists.

	\begin{example}[Euclidean domain]\label{example1}
		Let $\Omega\subseteq\mathbb R^n$ be a connected open set with the classical Laplacian $\Delta$. The relatively compact Euclidean balls $B(a,r)\Subset\Omega$ form a regular basis because the Dirichlet problem is solvable on every ball.
		A countable basis is obtained by restricting to rational centers and rational radii.
	\end{example}

	\begin{example}[Riemann surface]
		Let $\mathcal R$ be a Riemann surface. Around every point $x\in\mathcal R$, choose a
		holomorphic coordinate chart
		$z_\alpha:U_\alpha\to V_\alpha\subseteq\mathbb C$.
		A regular basis is obtained by taking coordinate discs
		$\Omega=z_\alpha^{-1}(B(a,r))\Subset U_\alpha$.
		Let $\mathrm{d}s_\alpha$ be the arc-length element induced by $z_\alpha$. The Poisson kernel on $\Omega$ is
		$$P_{\Omega,\alpha}(x,\xi)
		=
		\frac{r^2-|z_\alpha(x)-a|^2}
		{2\pi r|z_\alpha(x)-z_\alpha(\xi)|^2}.$$
		Thus, %for every boundary function $f\in C(\partial\Omega)$, 
		the Dirichlet solution is given by the Poisson integral formula.
		%$$ h(x) = \int_{\partial\Omega} P_{\Omega,\alpha}(x,\xi)f(\xi)ds_\Omega(\xi), \quad x\in\Omega.$$ %where $ds_\Omega$ denotes the pullback of Euclidean arclength measure on $\partial B(a,r)$.
	\end{example}
	
	\begin{example}[Semialgebraic sets]
		Let $\Omega\subseteq\mathbb R^n$ be an open connected set definable in an o-minimal expansion of the real field. For the classical Laplacian, the balls $B(a,r)$ with $\overline{B(a,r)}\subseteq\Omega$ form a definable regular basis. Assume in addition that $\Omega$ is Greenian, and let $G(x,y)$ be its Green kernel off the diagonal. Given an exhaustion $U_0\Subset U_1\Subset\cdots$ of $\Omega$ by relatively compact regular domains, one metric for the compact-open topology of bounded Martin profiles is
		$$d_M(y,z)=\sum_{m<\omega}2^{-m-1}\sup_{x\in\overline U_m}|\widehat K(x,y)-\widehat K(x,z)|,$$
		where $K(x,y)=G(x,y)/G(x_0,y)$ and $\widehat K=K/(1+K)$. The Martin compactification is the completion of this profile space, after quotienting if $d_M$ is only a pseudometric. When $\Omega$ is regular, the Riesz representation theorem gives harmonic measure $\omega^x$ from the positive functional $g\mapsto D_\Omega(g)(x)$. If the topological and minimal Martin boundaries agree and the relevant harmonic measures are mutually absolutely continuous, then $K(x,\xi)=\mathrm{d}\omega^x/\mathrm{d}\omega^{x_0}(\xi)$ for $\omega^{x_0}$-almost every $\xi\in\partial\Omega$. Consequently, for $g\in C(\partial\Omega)$, the Dirichlet solution has the Martin representation
		$$
		\begin{aligned}
			h(x)
			=
			\int_{\partial\Omega}K(x,\xi)\mathrm{d}\mu_h(\xi)
		\end{aligned}
		$$
		where $\mathrm{d}\mu_h(\xi)=g(\xi)\mathrm{d}\omega^{x_0}(\xi)$. Without a simplex assumption, this measure is generally not unique.
	\end{example}

	\begin{example}[Discrete graph]\label{example: discrete graph}
		Let $G=(V,E)$ be a connected locally finite graph. Equip $V$ with the discrete topology and consider the graph Laplacian
		$$\Delta f(x)=f(x)-\frac{1}{\deg(x)}\sum_{y\sim x}f(y),$$
		where $y\sim x$ means that $y$ is adjacent to $x$. Thus the equation $\Delta f(x)=0$ is precisely the mean-value property. More generally, for an irreducible Markov chain $P=(p(x,y))$ supported on the edges, put $\Delta_P=I-P$, where $Pf(x)=\sum_{y\in V}p(x,y)f(y)$. For a finite nonempty set $U\Subset V$, define its outer graph
		boundary $$\partial_G U=\left\{
		\xi\in V\setminus U:
		\exists x\in U, p(x,\xi)>0
		\right\}.$$ Finite connected subgraphs $U\Subset V$, together with their outer boundaries $\partial_GU$, form the natural regular domains for discrete potential theory.
		
		Let $X_n$ be the corresponding Markov chain, and define the first exit time from $U$ by $\tau_U=\inf\{n\ge0:X_n\notin U\}.$ Since $U$ is finite and the chain is irreducible, one has
		$\mathbb P_x(\tau_U<\infty)=1$ for $x\in U$.
		The discrete Poisson kernel is the exit distribution
		$$P_U(x,\xi)
		=
		\mathbb P_x(X_{\tau_U}=\xi),
		\quad
		x\in U,\xi\in\partial_GU.$$ For every boundary function
		$f\in\mathbb R^{\partial_GU}$, the unique solution $u\in\mathbb R^U$ of the discrete Dirichlet problem satisfies $(I-Q)u=Rf$, where $Q=(p(x,y))_{x,y\in U}$ and $R=(p(x,\xi))_{x\in U,\,\xi\in\partial_GU}$. Thus $u$ is definable from the transition probabilities and boundary data by a finite linear system.
	\end{example}
	
	Henceforth, we assume that the nonempty, locally compact, locally connected Hausdorff space $X$ admits a regular basis $\tau_0$ and that the harmonic sheaf $H=(H(U),\rho_{V,U})$ is considered only for $U,V\in\tau_0$. %For each $U  \in \tau_0$, assume $1 \in H(U)$. % and $H(U)$ is a subspace of the Banach space $C(\overline U)$ of continuous functions on $\overline U$.
	
	\begin{definition}[Monotone convergence principle]
		A harmonic sheaf is said to satisfy the monotone convergence principle if, for any harmonic functions $h_n$ with $h_n\le h_{n+1}$ on a connected set $U\in\tau_0$, there exists an upper envelope $h= %\lim\limits_{\mathscr U} h_n 
		\sup_n h_n$ taking values in $(-\infty,+\infty]$ that is either harmonic or identically $+\infty$ on $U$.
	\end{definition}

	\begin{remark}%The original convergence axiom of Brelot is stated for an arbitrary upward-directed family $\mathcal F\subseteq\mathcal H(U)$. The axiom implies its pointwise upper envelope $\sup_{h\in\mathcal F} h$ is either in $H(U)$ or identically equals $+\infty$. \cite{CC1963axiomatic} proved these two axioms equivalent. 
		The monotone-convergence principle is an axiom scheme. The related Harnack inequality states that for every connected open set $U$, every compact set
		$K\Subset U$, and every nonnegative harmonic function $h\in H(U)$, there is a constant $C_{K,U}\ge1$, independent of $h$, such that
		$$
		\sup_{x\in K}h(x)
		\le
		C_{K,U}\inf_{x\in K}h(x).
		$$
		Under the sheaf and regular basis axioms, this family of inequalities is equivalent to the monotone convergence principle \cite[Theorem 1]{LW1965axiomatic}.
	\end{remark}

	%\begin{lemma}[Tietze]   For every $U \in \tau_0$ and $g \in C(\partial U)$, there exists a relatively compact open neighborhood $W$ of $\overline U$ and a bounded continuous extension $\tilde g \in C(\overline{W})$ that agrees with $g$ on $\partial U$.\label{lem: Tietze}\end{lemma}\begin{proof} Since $\overline U$ is compact and $X$ is locally compact, there exists an open set $W$ such that $\overline U \subset W$ and $\overline{W}$ is compact. The subspace $\overline{W}$ is compact Hausdorff, hence normal. The set $\partial U$ is a closed subset of $\overline{W}$, so by the Tietze extension theorem, $g$ extends to a continuous function $\tilde g \in C(\overline{W})$. Clearly, $\tilde{g}$ is bounded and agrees with $g$ on $\partial U$ since $\overline W$ is a compact neighborhood of $\partial U$.\end{proof}

	\begin{theorem}[Dirichlet solution and trace]\label{thm: Dirichlet solution and trace} Assume the regular-basis axiom and let $U\in\tau_0$. Then $D_U:C(\partial U)\to C(\overline U)\cap H(U)$ is a bijection whose inverse is the trace operator $\operatorname{Tr}_U$. Conversely, if $\operatorname{Tr}_U:C(\overline U)\cap H(U)\to C(\partial U)$ is a surjective isometry, then the regular Dirichlet problem on $U$ is solvable and unique.
	\end{theorem}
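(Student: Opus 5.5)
The plan has two parts: the forward direction via the definition of regular domains, and the converse by reading existence and uniqueness off the trace.

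\emph{Forward direction.} Fix $U\in\tau_0$ and define $\operatorname{Tr}_U(h)=h|_{\partial U}$ for $h\in C(\overline U)\cap H(U)$. Given $g\in C(\partial U)$, the regular-basis axiom produces a unique $h\in H(U)$ with $\lim_{\mathscr U\to y}h=g(y)$ for every ultrafilter $\mathscr U$ on $U$ converging to $y\in\partial U$. The first step is to show that setting $h(y)=g(y)$ on $\partial U$ gives a continuous function on $\overline U$. Continuity at interior points is automatic, since $H(U)$ consists of continuous functions. At a boundary point $y$, suppose continuity fails. Then some $\varepsilon>0$ has the property that every neighbourhood $V$ of $y$ contains a point $z\in V\cap\overline U$ with $|h(z)-g(y)|\ge\varepsilon$. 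Points of $\partial U$ can be discarded, using continuity of $g$ on the boundary together with the fact that each such boundary point is itself a limit of interior points. The sets $\{x\in V\cap U:|h(x)-g(y)|\ge\varepsilon/2\}$ therefore have the finite intersection property. An ultrafilter extending them converges to $y$ but violates the boundary limit condition, a contradiction.

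Hence $D_U(g)\in C(\overline U)\cap H(U)$ and $\operatorname{Tr}_U D_U=\operatorname{id}$. For the other composition, take $h\in C(\overline U)\cap H(U)$ and put $g=h|_{\partial U}$. Continuity of $h$ on $\overline U$ shows that $h|_U$ satisfies the ultrafilter boundary condition for $g$, so uniqueness in the axiom gives $D_U(\operatorname{Tr}_U h)=h$. Thus $D_U$ is a bijection with inverse $\operatorname{Tr}_U$.

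\emph{Converse.} Assume $\operatorname{Tr}_U$ is a surjective isometry, with the sup norms on $C(\overline U)$ and $C(\partial U)$.
\begin{itemize}
\item \emph{Existence.} Surjectivity gives, for each $g$, some $h\in C(\overline U)\cap H(U)$ with $h|_{\partial U}=g$. The ultrafilter limits follow from continuity on the compact set $\overline U$, exactly as in the forward direction.
\item \emph{Uniqueness among continuous extensions.} An isometry is injective, so two such extensions with the same trace coincide.
\item \emph{Uniqueness in the stronger sense.} The axiom asks for uniqueness among all $h\in H(U)$ satisfying the boundary limit condition, not only those already in $C(\overline U)$. But any such $h$ extends continuously to $\overline U$ by the argument of the forward direction, so this reduces to the previous point.
\item \emph{Positivity.} Take $g\ge0$ and suppose $h=D_U g$ has $\inf h<0$. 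Then $\|h\|_\infty$ is attained in absolute value. I expect to need linearity of $H(U)$, which is implicit in the setting and should be stated. I would compare $h$ with $\tilde h=D_U(|g|+\|g\|)$, or argue via $\|h-c\|=\|g-c\|$ applied to $c=\|g\|/2$. For this I need constants to be harmonic, which will have to be taken as a standing assumption (cf.\ the commented assumption $1\in H(U)$). The identity $\|h-c\|_{\overline U}=\|g-c\|_{\partial U}\le c$ then forces $h\ge0$.
\end{itemize}

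\emph{Main obstacle.} I expect the hardest part to be positivity in the converse, since it genuinely needs linearity and harmonicity of constants. I will make these hypotheses explicit. The compactness argument turning ultrafilter limits into continuity is routine but must be written carefully.
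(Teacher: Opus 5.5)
Your proposal is correct and follows the paper's route. In the forward direction, regularity together with uniqueness gives $\operatorname{Tr}_U D_U=\operatorname{id}$ and $D_U\operatorname{Tr}_U=\operatorname{id}$. In the converse, surjectivity gives existence and injectivity of the isometry gives uniqueness.

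Your ultrafilter argument for the continuous extension, and your reduction of general uniqueness to uniqueness among continuous extensions, fill in steps the paper takes for granted. The positivity bullet is not needed: the paper reads ``solvable and unique'' as existence and uniqueness only. You therefore do not have to assume that constants are harmonic.
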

	\begin{proof}
		For every $g\in C(\partial U)$, regularity gives a unique $h\in H(U)$ extending continuously to $\overline U$ with $h|_{\partial U}=g$. Thus $D_U(g)=h$ and $\operatorname{Tr}_U(D_Ug)=g$.
		
		Conversely, if $h\in C(\overline U)\cap H(U)$, uniqueness gives $D_U(\operatorname{Tr}_Uh)=h$. Hence the two maps are inverse. For $h_1,h_2\in C(\overline U)\cap H(U)$, the maximum principle gives
		$$
		\|h_1-h_2\|_{C(\overline U)}
		=
		\|\operatorname{Tr}_U(h_1)-\operatorname{Tr}_U(h_2)\|_{C(\partial U)}.
		$$
		Therefore $\operatorname{Tr}_U$ is an isometry. Conversely, surjectivity produces a continuous harmonic extension for every boundary datum, and injectivity gives uniqueness.
	\end{proof}

	\begin{definition}[Brelot harmonic space] A Brelot harmonic space is a harmonic sheaf which admits a regular basis and satisfies the monotone convergence principle.
	\end{definition}
	
	\subsection{Continuous first-order logic}
	
	%\begin{definition}The logical symbols of continuous first-order logic are\begin{itemize}    \item Parentheses: $( \ , \ )$    \item Connectives: $\dotminus$    \item Quantifiers: $\sup$ (or $\inf$)    \item Variables: $v_0, v_1, \dots$    \item Constants: $\mathbb Q \cap [0,1]$    \item A metric: $d$\end{itemize}\end{definition}
	
	We use the logical symbols of continuous first-order logic, including parentheses, the connectives $\dotminus$, $\neg$, and $\frac12$, the quantifiers $\sup$ and $\inf$, and variables $x_1,x_2,\ldots$, as in \cite{Yaacov2008CONTINUOUSFO}. Our terminology for unbounded continuous logic follows \cite{BenYaacov2008Unbounded}. %Extend the formal binary operation $\dotminus$ on terms $x\dotminus y = \max(x-y,0)$ and on well-formed formulas $v(\varphi\dotminus \psi)= \max(v(\varphi) - v(\psi), 0)$ for any evaluation map $v$ to the set of all formulas by structural induction. %The only difference between continuous first-order logic and binary first-order logic is that continuous first-order logic contains a real closed field as meta-theory, so we can easily deal with the closure operation and convergence of points.
	%. In the context of continuous logic, it is extended to all formulas as follows       \begin{itemize}        \item for terms $t_1,t_2$, this operation is defined pointwise as usual;        \item for atomic formulas $\varphi,\psi$, $v(\varphi\dotminus \psi)= \max(v(\varphi) - v(\psi), 0)$ for any evaluation map $v$;   \item for arbitrary well-formed formulas $\varphi, \psi$, the operation is extended recursively. Assuming the variable $x$ is not free in $\psi$, we have the following logical equivalences\end{itemize}              \begin{align*}            (\sup_x \varphi) \dotminus \psi &\equiv \sup_x (\varphi \dotminus \psi), \\            (\inf_x \varphi) \dotminus \psi &\equiv \inf_x (\varphi \dotminus \psi), \\   \psi \dotminus (\sup_x \varphi) &\equiv \inf_x (\psi \dotminus \varphi), \\            \psi \dotminus (\inf_x \varphi) &\equiv \sup_x (\psi \dotminus \varphi).        \end{align*}By structural induction, these rules ensure that $\dotminus$ can penetrate any quantifier prefix, making it a globally well-defined operation on the set of all formulas. 

	\begin{definition}[Language Har]
		%Fix a locally compact, locally connected Hausdorff base space $X$. Let $(\tau_0,\subseteq)$ be a partially ordered family of relatively compact regular open sets of $X$. %The elements$U,V\in\tau_0$ are external indices of the language and are not variables of the object language.
		The unbounded many-sorted continuous first-order language
		$\mathcal L_{\mathrm{Har}}$ first contains the following sort:
		
		\begin{itemize}
			
			\item (Point sort) $(X,d_X,\nu_X)$, a gauged metric space, where $d_X:X\times X\to[0,\infty)$ is a metric and $\nu_X:X\to[0,\infty)$ is a gauge.
			
		\end{itemize}
		Let $(\tau_0,\subseteq)$ be a partially ordered family of relatively compact regular open subsets of the base space $X$. For every $U\in\tau_0$, the language $\mathcal L_{\mathrm{Har}}$ also contains
		
		\begin{itemize}
			\item (Domain sort) $(X_U,d_U,\nu_U)$, a gauged metric space,
			
			\item (Harmonic sort)
			$(H(U),+_{H(U)},\cdot_{H(U)},d_{H(U)},\nu_{H(U)})$, a gauged metric linear space of harmonic functions on $U$,
			
			\item (Boundary sort) $(B(U),+_{B(U)},\cdot_{B(U)},d_{B(U)},\nu_{B(U)})$, a gauged metric linear space of boundary functions on $\partial U$,
			
			\item (Embedding function) $i_U:X_U\to X$,
			
			\item (Solution function) $D_U:B(U)\to H(U)$, with $D_U(f)=h$, a positive linear operator,
			
			and, for every inclusion $V\subseteq U$ in $\tau_0$,
			
			\item (Inclusion function) $j_{V,U}:X_V\to X_U$,
			
			\item (Restriction function) $\rho_{V,U}:H(U)\to H(V)$, with $\rho_{V,U}(h)=h|_V$, a linear embedding,
			
			\item (Interior predicate) 
			$\operatorname{Eval}_U:X_U\times H(U)
			\to
			\mathbb R$, with
			$\operatorname{Eval}_U(x,h)=h(x)$.
			%\item (Boundary predicate) ${\rm Eval}_{U,\xi}:B(U)    \rightarrow    \mathbb R,$ $    {\rm Eval}_{U,\xi}(f)=f(\xi)$.
		\end{itemize}
	\end{definition}
	
	\begin{definition}[Compatibility axiom] \label{def: Compatibility axiom}
		The predicates and functions are required to satisfy the following compatibility axioms. For every $U\in\tau_0$,
		\begin{enumerate}
			\item (Identity) $j_{U,U}=\operatorname{id}_{X_U}$.

            For every $V\subseteq U$, $x\in X_V$, and
			$h\in H(U)$,
			\item (Restriction) $
			\operatorname{Eval}_V\bigl(x,\rho_{V,U}(h)\bigr)
			=
			\operatorname{Eval}_U\bigl(j_{V,U}(x),h\bigr).
			$

            For every $W\subseteq V\subseteq U$,
			\item (Morphism) 
			$i_U\circ j_{V,U}=i_V$ and
			$j_{W,U}=j_{V,U}\circ j_{W,V}$.

            For every $U\in\tau_0$, $x\in X_U$,
			$h_1,h_2\in H(U)$, and named scalars $\lambda_1,\lambda_2$,
            
			\item (Linearity) 
			$\operatorname{Eval}_U(x,\lambda_1 h_1+\lambda_2 h_2)
			=
			\lambda_1\operatorname{Eval}_U(x,h_1)
			+
			\lambda_2\operatorname{Eval}_U(x,h_2).$
		\end{enumerate}
	\end{definition}

	%\begin{remark}   For the boundary sort $C(\partial U)$, one should be more careful since $\operatorname{Eval}_{\xi}:(\xi,f)\mapsto f(\xi)$ is not uniformly continuous in $\xi$ for the entire space $C(\partial U)$. Thus, it is safer to include only boundary point evaluation maps $\operatorname{Eval}_{\xi}$ for named boundary points $\xi$ when needed.\end{remark}
	
	\begin{remark} Throughout the rest of this paper, $d_X$ is assumed to be compatible with the topology $\tau$. The boundary sort $B(U)$ is intended to represent $C(\partial U)$. For every $U\in\tau_0$, regularity of $U$ means that each boundary function $f\in C(\partial U)$ has a unique harmonic extension $h\in H(U)$. Model-theoretically, the Dirichlet solution operator $D_U:C(\partial U)\to H(U)$ is a Skolem function. Theorem~\ref{thm: Dirichlet solution and trace} shows that $D_U$ is $1$-Lipschitz in the supremum norm and hence is a legitimate function symbol in continuous first-order logic. \end{remark}
	
	%\begin{remark}For every $U \in \tau_0$, writing $  \delta_U(x) :=d_X(\iota_U(x),X\setminus U),$ one may take $\nu_U(x)    :=    {1}/{\delta_U(x)}$ and $  d_U(x,y):=    d_X(\iota_U(x),\iota_U(y))    +    |\nu_U(x)-\nu_U(y)|.$ Thus points approaching $\partial U$ have gauge tending to $+\infty$, while every gauge bounded part of $X_U$ lies in a compact subset of $U$. Choose a compact exhausting $K_0\Subset K_1 \Subset \dots \Subset U$. Thus, there exists a natural metric on $H(U)$ induced by the family of semi-norms $p_m(h)=\sup_{x\in K_m} |h(x)|$$d_{H(U)}(h_1,h_2) = \sum_{m=1}^{\infty}2^{-m} \left[p_m(h_1 - h_2) \land 1\right],$where the choice of a compact exhaustion is independent of this Fr\'echet metric.\end{remark}

	Fix a continuous many-sorted language $\mathcal L$. For example, take $\mathcal L=\mathcal L_{\mathrm{Har}}$.
	\begin{definition}[Structure]
		An $\mathcal L$-structure, or model, $\mathfrak M=(M_1,M_2,\ldots;\sigma)$ consists of nonempty sets $M_1,M_2,\ldots$ and an interpretation function $\sigma$ that assigns a metric space to each sort and a continuous function with a prescribed uniform-continuity modulus to each predicate and function symbol.
	\end{definition}
	
	Recall that an $\mathcal L$-formula without free variables is an $\mathcal L$-sentence. An assignment $\chi$ in $\mathfrak M=(M_1,M_2,\dots;\sigma)$ sends each variable to the appropriate sort.
	
	\begin{lemma}[Coincidence lemma]
		\label{lem: coincidence}
		Let $\mathfrak M$ be an $\mathcal L$-structure, let $t$ be an $\mathcal L$-term, let $\varphi$ be an $\mathcal L$-formula, and let $\chi_1,\chi_2$ be assignments in $\mathfrak M$.
		
		\begin{enumerate}
			\item If $\chi_1(x)=\chi_2(x)$ for every variable $x$ occurring in $t$, then
			$t^{\mathfrak M,\chi_1}
			=
			t^{\mathfrak M,\chi_2}$.
			
			\item If $\chi_1(x)=\chi_2(x)$ for every variable $x$ occurring freely in $\varphi$, then
			$\varphi^{\mathfrak M,\chi_1}
			=
			\varphi^{\mathfrak M,\chi_2}$.
		\end{enumerate}
	\end{lemma}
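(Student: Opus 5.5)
The plan is structural induction: first on terms for part (1), then on formulas for part (2), with part (1) feeding into the atomic case of part (2). Throughout, I use the recursive definition of the value $t^{\mathfrak M,\chi}$ and $\varphi^{\mathfrak M,\chi}$ in continuous logic. A variable receives its assigned element. A function symbol $F$ of $\mathcal L_{\mathrm{Har}}$, such as $D_U$, $\rho_{V,U}$, $i_U$, $j_{V,U}$ or the linear operations on $H(U)$ and $B(U)$, is interpreted by $\sigma(F)$ applied to the values of its argument terms.

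For part (1), I argue by induction on $t$. If $t$ is a variable $x$, then $x$ occurs in $t$, so $t^{\mathfrak M,\chi_1}=\chi_1(x)=\chi_2(x)=t^{\mathfrak M,\chi_2}$. If $t$ is a constant symbol, its value does not depend on the assignment at all. If $t=F(t_1,\dots,t_k)$, then every variable occurring in some $t_i$ occurs in $t$. The induction hypothesis therefore gives $t_i^{\mathfrak M,\chi_1}=t_i^{\mathfrak M,\chi_2}$ for each $i$, and applying $\sigma(F)$ to equal arguments gives equal values. Sorts cause no difficulty, since both assignments send each variable into its prescribed sort.

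For part (2), I argue by induction on $\varphi$, claiming the statement simultaneously for all pairs of assignments.
\begin{itemize}
\item Atomic formulas $P(t_1,\dots,t_k)$, with $P$ either a metric symbol $d_S$, a gauge $\nu_S$, or $\operatorname{Eval}_U$: every variable of each $t_i$ is free in $\varphi$, so part (1) makes the arguments agree and hence the real values agree.
\item Connectives $\dotminus$, $\neg$, $\frac12$ (and any other continuous connective applied to finitely many formulas): the value is a fixed real function of the values of the immediate subformulas. Each of these has its free variables among those of $\varphi$, so the induction hypothesis applies.
\end{itemize}

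The only step needing care is the quantifier case $\varphi=\sup_y\psi$ (and likewise $\inf_y\psi$), where $y$ ranges over its sort $M_S$. Here $\varphi^{\mathfrak M,\chi}=\sup_{a\in M_S}\psi^{\mathfrak M,\chi[y\mapsto a]}$. For each $a$, the modified assignments $\chi_1[y\mapsto a]$ and $\chi_2[y\mapsto a]$ agree on every free variable of $\psi$. Such a variable is either $y$, where both take the value $a$, or a free variable of $\varphi$, where $\chi_1$ and $\chi_2$ already agree. This is why the induction hypothesis must be stated for all pairs of assignments, and not just the fixed $\chi_1,\chi_2$. It gives equality of the two families of values indexed by $a$, hence of their suprema.

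In the unbounded setting of \cite{BenYaacov2008Unbounded} the supremum may be $+\infty$, or be taken over gauge-bounded pieces, but two identical families of values still have identical suprema or infima. So no analytic input, and nothing about harmonicity, is used anywhere in the argument.
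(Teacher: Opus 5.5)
Your proposal is correct and takes the same route as the paper, which says only that the lemma follows by induction on terms and then on formulas, citing Chang--Keisler. Your explicit treatment of the quantifier case is exactly the standard detail that citation covers: you prove the formula clause for all pairs of assignments at once, so that the modified assignments $\chi_i[y\mapsto a]$ fall under the induction hypothesis.
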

	The coincidence lemma follows by induction on terms and formulas \cite[Section 1.4]{ChangandKeisler2013modeltheory}. In particular, if $t$ is a term with no variables, then its interpretation is independent of the assignment, and we write it simply as $t^{\mathfrak M}.$ If $\varphi$ is an $\mathcal L$-sentence, then its value is independent of the assignment, and we write $\varphi^{\mathfrak M}.$ %For an $\mathcal L$-formula $\varphi(\bar x_1, \bar x_2,\dots)$ with free variables $\bar x_1, \bar x_2,\dots$, the $\mathcal L$-sentence $\sup_{\bar x} \varphi$ is called its universal closure where only finitely many variables occur in $\sup_{\bar x} = \sup_{\bar x_1}\sup_{\bar x_2}\dots$. An $\mathcal L$-structure $\mathfrak M$ is called a model of an $\mathcal L$-formulas $\varphi$, denoted as $\mathfrak M\models \varphi$ if its universal closure of $\sup_{\bar x} \varphi$ assigns to 0 in $\mathfrak M$.
	
	\begin{definition}[Theory] An $\mathcal L$-theory $T$ is a set of $\mathcal L$-sentences. A structure $\mathfrak M$ satisfies an $\mathcal L$-sentence $\varphi$, written $\mathfrak M\models \varphi$, when $\varphi^{\mathfrak M} = 0$. $\mathfrak M$ is a model of $T$, written $\mathfrak M \models T$, when it satisfies every sentence in $T$. An $\mathcal L$-theory $T$ is called satisfiable if there is a model of $T$. The full theory $\operatorname{Th}(\mathfrak M)$ of an $\mathcal L$-structure $\mathfrak M$ consists of all sentences having value zero in $\mathfrak M$.
	\end{definition}
	
	Let $\mathfrak M,\mathfrak M'$ be two $\mathcal L$-structures. An elementary embedding is an injective map $\eta:M\to M'$ that preserves the interpretation of every symbol and satisfies $(\mathfrak M,\chi)\models\varphi$ if and only if $(\mathfrak M',\eta\circ\chi)\models\varphi$ for every $\mathcal L$-formula $\varphi(\bar x)$ and every assignment $\chi$ sending $\bar x$ to $\bar a$. If $M\subseteq M'$, we say that $\mathfrak M'$ is an elementary extension of $\mathfrak M$. Two models $\mathfrak M$ and $\mathfrak M'$ are elementarily equivalent if $\mathfrak M\models\varphi$ if and only if $\mathfrak M'\models\varphi$ for every sentence $\varphi$.
	A satisfiable $\mathcal L$-theory $T$ is called complete if any two models $\mathfrak M_1$ and $\mathfrak M_2$ of $T$ are elementarily equivalent. By construction, $\operatorname{Th}(\mathfrak M)$ is complete.
	
	%\begin{definition}[Lebesgue covering dimension]   Let $\mathcal U$ be a family of subsets of $X$. Define $\operatorname{mult}(\mathcal U)$ as the smallest ordinal $\alpha$ such that when every point of $X$ belongs to at most $\alpha+1$ members of $\mathcal U$. $\dim (X)$ is the smallest ordinal $\beta$ such that every open cover $\mathcal U$ of $X$ admits an open refinement $\mathcal U'$ satisfying $  \operatorname{mult}(\mathcal U')\le \beta+1.$\end{definition}

	\begin{definition}[BHS] The theory BHS consists of the $\mathcal L_{\mathrm{Har}}$-sentences for Brelot harmonic spaces. Its axioms are the compatibility and sheaf axioms, the regular-basis axiom, and the monotone-convergence scheme.
		%\begin{itemize}   \item  BHS = Compatibility + Sheaf + Regular basis + Monotone convergence. \item  CBHS = BHS + $X$ is connected and contains at least two points. 
		%   \item CBHS$_n$ = CBHS + $\dim (X) = n$.
		%\item BHS$^{\rm FI}$ = BHS + FI, CBHS$^{\rm FI}$ = CBHS + FI.\end{itemize}
\end{definition} 

%\begin{remark}[Sheaf axiom and finitary syntax]\label{rem:sheaf-syntax}The full sheaf axiom is an analytic condition on the intended Brelot structures and quantifies over arbitrary open covers, so it is not asserted to be finitary first-order sentences. Its finite part is nevertheless first-order. For every named finitecover, compatibility of finitely many sections on the pairwise overlaps and their gluing to a section on $U$ can be expressed by a bounded continuous first-order scheme, since only finitely many harmonic variables and restriction maps occur. We refer to these sentences as the finite sheaf scheme.\end{remark}

%\begin{remark}	Connectedness and dimension require separate coding. One possible connected expansion adds a sort of finite chains of basic domains and axioms saying that any two point parameters occur in a chain with successive nonempty overlaps. In a smooth $n$-dimensional expansion, one instead adds chart sorts and maps $\chi_U:X_U\to\mathbb R^n$ so that the coordinate arity fixes $n$, and transition maps satisfy the usual compatibility conditions. We refer to these coded expansions descriptively when an example needs them, but do not denote them by new global theories.\end{remark}

\begin{example}[Incompleteness of $\mathrm{BHS}$]
	\label{ex:BHS-incomplete}
	Example~\ref{example: discrete graph} gives finite and infinite graph models whose point sort carries the discrete metric. If $\mathfrak G_N$ has exactly $N$ points and $\mathfrak G_\infty$ has infinitely many, consider the sentence
	$$\sigma_N=\sup_{x_0,\ldots,x_N\in X}\min_{0\le i<j\le N}d_X(x_i,x_j).$$
	Every $(N+1)$-tuple in $\mathfrak G_N$ repeats a coordinate, so $\sigma_N^{\mathfrak G_N}=0$. In $\mathfrak G_\infty$ one can choose $N+1$ distinct points, so $\sigma_N^{\mathfrak G_\infty}=1$. Hence $\mathrm{BHS}$ is not complete.

	Example~\ref{example1} also yields non-elementarily-equivalent models. Assume that the language contains a distinguished regular-domain
	label $U_*$ and the bounded Dirichlet harmonic sort
	$$
	H^D_1(U_*)
	=
	\{h\in H^D(U_*):\nu_{H^D(U_*)}(h)\le 1\}.
	$$
	Fix a rational number $r$ with $0<r<1$. For every $N\ge2$,
	consider the sentence
	$$
	\sigma^D_{N,r}
	=
	\inf_{h_1,\ldots,h_N\in H^D_1(U_*)}
	\max_{1\le i<j\le N}
	\bigl(r-d_{H^D(U_*)}(h_i,h_j)\bigr)_+.
	$$
	Its value is zero precisely when the unit ball
	$H^D_1(U_*)$ contains an $N$-element $r$-separated subset.
	
	Let $M_1$ be the classical harmonic structure on
	$\mathbb R$, with
	$U_*^{M_1}=(-1,1)$.
	Every harmonic function on $(-1,1)$ is affine. The trace map
	$h\mapsto\bigl(h(-1),h(1)\bigr)$
	identifies $H^D(U_*^{M_1})$ isometrically with a
	two-dimensional normed space. Hence its closed unit ball is compact.
	It is therefore totally bounded, so there exists $N_0$ such that
	$H^D_1(U_*^{M_1})$ contains no $N_0$-element
	$r$-separated subset. Since the infimum in
	$\sigma^D_{N_0,r}$ is taken over a compact space, it follows that
	$(\sigma^D_{N_0,r})^{M_1}>0$.
	
	Let $M_2$ be the classical harmonic structure on
	$\mathbb R^2$, with
	$U_*^{M_2}=B(0,1)$.
	The Dirichlet operator is an isometric isomorphism
	$D_{U_*}:C(S^1)\to H^D(B(0,1))$.
	Thus $H^D(U_*^{M_2})$ is infinite-dimensional. By Riesz
	lemma, its closed unit ball contains arbitrarily large finite
	$r$-separated subsets. Consequently,
	$(\sigma^D_{N,r})^{M_2}=0$
	for every $N\ge2$. In particular,
	$(\sigma^D_{N_0,r})^{M_1}
	\neq
	(\sigma^D_{N_0,r})^{M_2}.$
	Hence the two connected harmonic structures are not elementarily
	equivalent.
\end{example}

%Similarly, one can show that CBHS$_n$ is also not complete.

Write $H^D(U)=C(\overline U)\cap H(U)$. Theorem~\ref{thm: Dirichlet solution and trace} shows that the two Skolem functions $C(\partial U)
\underset{\operatorname{Tr}_U}{\overset{D_U}{\rightleftarrows}}
H^D(U)$ are inverse isometries.

\begin{theorem}[Elimination of the auxiliary boundary sort]
	\label{thm:boundary-sort-elimination}
	For every formula $\varphi(g,\bar z)$ with $g\in B(U)$, there is a formula $\varphi^D(h,\bar z)$ with $h\in H^D(U)$ such that $\varphi(g,\bar z)=\varphi^D(D_U(g),\bar z)$.
	Consequently,
	$$\begin{aligned}
	    \inf_{g\in B(U)}\varphi(g,\bar z)&=\inf_{h\in H^D(U)}\varphi^D(h,\bar z),\\
        \sup_{g\in B(U)}\varphi(g,\bar z)& =\sup_{h\in H^D(U)}\varphi^D(h,\bar z).
	\end{aligned}$$
\end{theorem}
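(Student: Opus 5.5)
The plan is to define $\varphi^D$ by substitution and then prove the identity by induction on the construction of $\varphi$. By Theorem~\ref{thm: Dirichlet solution and trace}, $D_U:C(\partial U)\to H^D(U)$ and $\operatorname{Tr}_U$ are mutually inverse isometries. They are also linear: $D_U$ is linear by the language, and hence so is its inverse. So $\operatorname{Tr}_U$ is a legitimate $1$-Lipschitz function symbol $H^D(U)\to B(U)$, and I will treat it as a Skolem function available in the language, alongside $D_U$.

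I set
\[
\varphi^D(h,\bar z):=\varphi(\operatorname{Tr}_U(h),\bar z),
\]
that is, every free occurrence of the boundary variable $g$ is replaced by the term $\operatorname{Tr}_U(h)$. By the substitution lemma, proved by the same induction on terms and formulas as Lemma~\ref{lem: coincidence}, the value of $\varphi^D$ at $h$ equals the value of $\varphi$ under the assignment sending $g$ to $\operatorname{Tr}_U(h)$. Evaluating at $h=D_U(g)$ and using $\operatorname{Tr}_U\circ D_U=\operatorname{id}_{B(U)}$ then gives $\varphi^D(D_U(g),\bar z)=\varphi(g,\bar z)$. Before substituting I rename bound variables so that $h$ is not captured. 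If one prefers not to add $\operatorname{Tr}_U$ as a symbol, the alternative is to push the substitution through the atomic cases. Boundary-sort atoms are built from $+_{B(U)}$, $\cdot_{B(U)}$, $d_{B(U)}$, $\nu_{B(U)}$ and $D_U$. Each of these can be transported along the isometric linear bijection: for example, $d_{B(U)}(g_1,g_2)=d_{H^D(U)}(D_Ug_1,D_Ug_2)$, and $D_U(g_1+g_2)=D_Ug_1+D_Ug_2$. This requires the norm and gauge on $H^D(U)$ to be the sup-norm ones, matching $B(U)$.

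For the quantifier identities, I use that $D_U:B(U)\to H^D(U)$ is surjective. This follows from Theorem~\ref{thm: Dirichlet solution and trace}, since every $h\in H^D(U)$ equals $D_U(\operatorname{Tr}_Uh)$. Hence
\[
\{\varphi(g,\bar z):g\in B(U)\}=\{\varphi^D(D_Ug,\bar z):g\in B(U)\}=\{\varphi^D(h,\bar z):h\in H^D(U)\},
\]
and the infima and suprema of these equal sets coincide.

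I expect the main obstacle to be a syntactic subtlety rather than an analytic one: $H^D(U)$ has to be a sort, or a definable subsort of $H(U)$, on which $\varphi^D$ makes sense and which inherits the sup-norm metric, so that $\operatorname{Tr}_U$ has a uniform continuity modulus. I would handle this by declaring $H^D(U)$ as the image sort of $D_U$, equipped with the metric transported from $B(U)$. With that metric, both directions are isometries, and every modulus of uniform continuity is preserved under the substitution.
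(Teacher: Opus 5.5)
Your proposal is correct and follows the paper's proof. The paper likewise defines $\varphi^D(h,\bar z)=\varphi(\operatorname{Tr}_U(h),\bar z)$, uses that $D_U$ and $\operatorname{Tr}_U$ are inverse definable isometries by Theorem~\ref{thm: Dirichlet solution and trace}, and derives the $\inf$ and $\sup$ identities from bijectivity of $D_U$ onto $H^D(U)$; your explicit handling of $\operatorname{Tr}_U$ as a named symbol and of the sup-norm metric on $H^D(U)$ spells out what the paper assumes when it calls these maps definable isometries.
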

\begin{proof}
	By Theorem~\ref{thm: Dirichlet solution and trace}, $D_U:B(U)\to H^D(U)$ and $\operatorname{Tr}_U:H^D(U)\to B(U)$ are inverse definable isometries. Define $\varphi^D(h,\bar z)=\varphi(\operatorname{Tr}_U(h),\bar z)$. Then $\varphi^D(D_U(g),\bar z)=\varphi(g,\bar z)$. The translation commutes with continuous connectives. Since $D_U$ is bijective onto $H^D(U)$, it also commutes with $\inf$ and $\sup$ over the corresponding sorts. Induction on formulas proves the result.
\end{proof}

Although BHS is incomplete, adding FI yields a relative result. The next subsection proves relative quantifier elimination on the finite-evaluation fragment and the resulting relative model-completeness.

\subsection{Finite interpolation}
\label{subsec:finite-interpolation-relative-QE}

We work in the fixed language $\mathcal L_{\mathrm{Har}}$. Throughout this subsection, a tuple $\bar r\in\mathbb R^m$ is metalinguistic notation for numerical values of evaluation predicates. In a formula, its entries occur only as constants in continuous connectives or as slots into which real-valued formula outputs are substituted.

\begin{definition}[Finite interpolation]
	\label{def:finite-interpolation}
	Fix $U\in\tau_0$, let $\bar x=(x_1,\ldots,x_m)\in U^m$, and define the finite evaluation map by $\operatorname{ev}_{\bar x}(h)=(\operatorname{Eval}_U(x_1,h),\ldots,\operatorname{Eval}_U(x_m,h))$. The condition $\mathrm{FI}_{U,m}$ says that $\operatorname{ev}_{\bar x}:H(U)\to\mathbb R^m$ is surjective whenever the entries of $\bar x$ are pairwise distinct. The finite-interpolation scheme is $\mathrm{FI}=\{\mathrm{FI}_{U,m}:U\in\tau_0,\ m\ge1\}$.
\end{definition}

For a compact set $K\Subset U$ and $\delta>0$, put $\operatorname{sep}_U(\bar x)=\min_{i<j}d_U(x_i,x_j)$ and
$\Gamma_{K,m,\delta}=\{\bar x\in K^m:\operatorname{sep}_U(\bar x)\ge\delta\}.$
This is the separated compact configuration set. Semantically, FI says that an unrestricted harmonic quantifier ranges over every numerical tuple of values at each $\bar x\in\Gamma_{K,m,\delta}$. The next lemma supplies the uniform gauge bound needed for bounded quantifiers.

\begin{lemma}[Uniform finite interpolation]
	\label{lem:uniform-finite-interpolation}
	Assume FI. For every $K\Subset U$, $m\ge1$, $\delta>0$, and $S>0$, there is a constant $C_{U,K,m,\delta,S}$ such that, for every $\bar x\in\Gamma_{K,m,\delta}$ and every $\bar r\in[-S,S]^m$, some $h\in H(U)$ satisfies $\operatorname{ev}_{\bar x}(h)=\bar r$ and $\nu_{H(U)}(h)\le C_{U,K,m,\delta,S}$. The constants may be chosen so that
	$C_{U,K,m,\delta,S}=S C_{U,K,m,\delta,1}.$
\end{lemma}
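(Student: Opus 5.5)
The plan is to reduce to the unit data $\bar r\in[-1,1]^m$ by linearity, then get a uniform bound over the compact configuration set $\Gamma_{K,m,\delta}$ by a finite-cover argument. Throughout I use that $\nu_{H(U)}$ is a seminorm-type gauge on the linear sort $H(U)$, i.e.\ $\nu_{H(U)}(\lambda h)=|\lambda|\nu_{H(U)}(h)$ and $\nu_{H(U)}(h_1+h_2)\le\nu_{H(U)}(h_1)+\nu_{H(U)}(h_2)$. This holds, for instance, for a sup-norm on a fixed compact exhaustion or for the Dirichlet norm; if the paper's gauge is only equivalent to such a seminorm, the constants change by a fixed factor. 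Granting this, the scaling claim $C_{U,K,m,\delta,S}=S\,C_{U,K,m,\delta,1}$ is immediate. If $h$ interpolates $\bar r/S\in[-1,1]^m$, then $Sh$ interpolates $\bar r$, and $\nu_{H(U)}(Sh)=S\,\nu_{H(U)}(h)$. So only the case $S=1$ needs proof.

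\emph{Step 1 (local cardinal basis).} Fix $\bar x^0\in\Gamma_{K,m,\delta}$. Its entries are pairwise distinct, so $\mathrm{FI}_{U,m}$ yields $h^0_1,\ldots,h^0_m\in H(U)$ with $\operatorname{ev}_{\bar x^0}(h^0_j)=e_j$, the standard basis vectors. For $\bar x$ near $\bar x^0$, consider the $m\times m$ matrix
$$A(\bar x)=\bigl(\operatorname{Eval}_U(x_i,h^0_j)\bigr)_{i,j}.$$
Each $h^0_j$ is continuous on $U$; in the language, $\operatorname{Eval}_U$ is uniformly continuous on gauge-bounded parts, and $K$ is gauge-bounded. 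Hence $A(\bar x)\to A(\bar x^0)=I$ as $\bar x\to\bar x^0$. Choose an open neighbourhood $N_{\bar x^0}$ on which $\|A(\bar x)-I\|\le\tfrac12$ in the $\ell^\infty$ operator norm. Then $A(\bar x)$ is invertible with $\|A(\bar x)^{-1}\|\le 2$ by the Neumann series.

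\emph{Step 2 (local bound).} For $\bar x\in N_{\bar x^0}$ and $\bar r\in[-1,1]^m$, set $\bar c=A(\bar x)^{-1}\bar r$ and $h=\sum_j c_j h^0_j$. The linearity compatibility axiom gives
$$\operatorname{ev}_{\bar x}(h)=A(\bar x)\bar c=\bar r.$$
Moreover $|c_j|\le 2$ for each $j$, so
$$\nu_{H(U)}(h)\le 2\sum_{j}\nu_{H(U)}(h^0_j)=:C_{\bar x^0}.$$

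\emph{Step 3 (compactness).} The set $\Gamma_{K,m,\delta}$ is a closed subset of the compact set $K^m$, since $\operatorname{sep}_U$ is continuous. Hence it is compact and is covered by finitely many neighbourhoods $N_{\bar x^1},\ldots,N_{\bar x^p}$. Take
$$C_{U,K,m,\delta,1}=\max_{k} C_{\bar x^k}.$$
The separation $\delta$ is used exactly here: it keeps the cover inside the region where FI applies, so that no base configuration has coinciding entries, where the cardinal functions would blow up.

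The main obstacle I expect is Step 1's continuity: I need $\bar x\mapsto\operatorname{Eval}_U(x_i,h)$ to be continuous for fixed $h$ on the relevant part of $X_U$, compatibly with the metric $d_U$ used in $\operatorname{sep}_U$. This should follow from the uniform-continuity modulus assigned to $\operatorname{Eval}_U$ on gauge-bounded sets, or semantically from the fact that harmonic functions are continuous on $U\supseteq K$. The other delicate point is the gauge hypothesis above. If $\nu_{H(U)}$ is merely a gauge that is monotone under the seminorm, I would rerun Steps 2 and 3 with that seminorm and then convert the bound at the end.
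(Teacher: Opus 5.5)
Your proposal is correct and follows the paper's proof step for step. Both arguments use FI to get a cardinal (Kronecker-delta) basis at each base configuration, keep the evaluation matrix within $\tfrac12$ of the identity so that its inverse has norm at most $2$, cover the compact set $\Gamma_{K,m,\delta}$ by finitely many such neighbourhoods, and use absolute homogeneity of the gauge to get $C_{U,K,m,\delta,S}=S\,C_{U,K,m,\delta,1}$. The paper also implicitly relies on the subadditivity and homogeneity of $\nu_{H(U)}$ that you make explicit.
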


\begin{proof}
	Fix $\bar x\in\Gamma_{K,m,\delta}$. By FI, there are $h_1^{\bar x},\ldots,h_m^{\bar x}\in H(U)$ with $h_i^{\bar x}(x_j)=1$ when $i=j$ and $h_i^{\bar x}(x_j)=0$ otherwise. For $\bar y$ near $\bar x$, let $M_{\bar x}(\bar y)$ be the matrix whose $(j,i)$-entry is $\operatorname{Eval}_U(y_j,h_i^{\bar x})$. This matrix depends continuously on $\bar y$ and equals the identity at $\bar x$. After shrinking to a neighborhood $N_{\bar x}$, we therefore have $\|M_{\bar x}(\bar y)^{-1}\|_{\infty\to\infty}\le2$ for $\bar y\in N_{\bar x}$.
	
	If $\|\bar r\|_\infty\le1$ and $\bar y\in N_{\bar x}$, set $\bar c=M_{\bar x}(\bar y)^{-1}\bar r$ and $h=\sum_{i=1}^m c_i h_i^{\bar x}$. Then $\operatorname{ev}_{\bar y}(h)=\bar r$ and
	$\nu_{H(U)}(h)\le2\sum_{i=1}^m\nu_{H(U)}(h_i^{\bar x}).$
	The compact set $\Gamma_{K,m,\delta}$ is covered by finitely many of the neighborhoods $N_{\bar x}$. Taking twice the largest corresponding sum gives $C_{U,K,m,\delta,1}$. Absolute homogeneity of the gauge then gives the stated value for general $S$.
\end{proof}

\begin{definition}[Minimal interpolation gauge]
	\label{def:minimal-interpolation-gauge}
	For a pairwise distinct tuple $\bar x\in U^m$ and a numerical target $\bar r\in\mathbb R^m$, define the minimal interpolation gauge
	$$q_{U,\bar x}(\bar r)=\inf\{\nu_{H(U)}(h):h\in H(U),\ \operatorname{ev}_{\bar x}(h)=\bar r\}.$$
	%This is the quotient gauge induced by the linear surjection $\operatorname{ev}_{\bar x}$. Equivalently, it is the Minkowski functional of the convex balanced set $\operatorname{ev}_{\bar x}(\{h:\nu_{H(U)}(h)\le1\})$. Thus $q_{U,\bar x}$ is determined by the finite evaluation map and the original harmonic gauge.
\end{definition}

\begin{theorem}[Uniform definability of the interpolation gauge]
	\label{thm:uniform-definability-interpolation-gauge}
	Assume FI, fix $K\Subset U$, $m\ge1$, $\delta>0$, and $S>0$, and put $C=C_{U,K,m,\delta,1}$. For every $\bar x\in\Gamma_{K,m,\delta}$ and $\bar r\in[-S,S]^m$, one has the exact penalty identity
	$$q_{U,\bar x}(\bar r)=\inf_{h\in H_{CS}(U)}\left(\nu_{H(U)}(h)+C\|\operatorname{ev}_{\bar x}(h)-\bar r\|_\infty\right),$$
	where $H_{CS}(U)=\{h\in H(U):\nu_{H(U)}(h)\le CS\}$. Moreover, $q_{U,\bar x}$ is $C$-Lipschitz in the target, the family is uniformly continuous in $\bar x$ on $\Gamma_{K,m,\delta}$, and these functionals form a uniformly definable predicate schema on the indicated bounded ranges.
\end{theorem}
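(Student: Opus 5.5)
The plan is to prove the penalty identity by two inequalities, then read off the Lipschitz, continuity and definability claims from the formula. Write $P(\bar x,\bar r)$ for the right-hand side.

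For $P\le q$: by Lemma~\ref{lem:uniform-finite-interpolation} with the constants $C_{U,K,m,\delta,S}=SC$, there is an interpolant $h_0$ with $\operatorname{ev}_{\bar x}(h_0)=\bar r$ and $\nu(h_0)\le CS$. Hence $q_{U,\bar x}(\bar r)\le CS$. Any $h$ with $\operatorname{ev}_{\bar x}(h)=\bar r$ and $\nu(h)<q+\varepsilon\le CS+\varepsilon$ therefore nearly lies in $H_{CS}(U)$. To avoid the $\varepsilon$ overshoot, scale $h$ by $t=\min(1,CS/\nu(h))$. Then $\nu(th)\le CS$, and the penalty changes by at most $C\|(1-t)\bar r\|_\infty\le C(1-t)S\le\varepsilon$ while the gauge drops. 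So $P\le q+2\varepsilon$.

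For $q\le P$: take any $h\in H_{CS}(U)$ and let $\bar e=\bar r-\operatorname{ev}_{\bar x}(h)$. We correct the mismatch using Lemma~\ref{lem:uniform-finite-interpolation} again, now with target $\bar e$. Homogeneity of the gauge and the normalization $C_{\cdot,1}=C$ give a harmonic $k$ with $\operatorname{ev}_{\bar x}(k)=\bar e$ and $\nu(k)\le C\|\bar e\|_\infty$. Then $h+k$ interpolates $\bar r$. By subadditivity of the gauge,
\[
q\le\nu(h)+\nu(k)\le\nu(h)+C\|\operatorname{ev}_{\bar x}(h)-\bar r\|_\infty .
\]
Taking the infimum over $h$ gives $q\le P$. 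This correction step is exactly why the penalty constant must be the interpolation constant $C$.

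The same correction argument gives the $C$-Lipschitz bound in the target. If $h$ interpolates $\bar r$, adding a $k$ with $\nu(k)\le C\|\bar r'-\bar r\|_\infty$ yields
\[
q(\bar r')\le q(\bar r)+C\|\bar r-\bar r'\|_\infty ,
\]
and symmetry gives the other direction. Here I only need $\bar r,\bar r'$ bounded, with $S$ enlarged harmlessly.

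For uniform continuity in $\bar x$ I would use the formula for $P$ rather than $q$ directly. The integrand $\nu(h)+C\|\operatorname{ev}_{\bar x}(h)-\bar r\|_\infty$ depends on $\bar x$ only through $\operatorname{Eval}_U(x_i,h)$. The main obstacle is to get a modulus of continuity for $x\mapsto h(x)$ that is uniform over the bounded ball $H_{CS}(U)$ and over $x\in K$. I would get it from the modulus attached to the predicate symbol $\operatorname{Eval}_U$ on gauge-bounded pieces, as built into the unbounded continuous-logic interpretation. Analytically this is Harnack equicontinuity of bounded harmonic families on compacta, which follows from the monotone-convergence scheme via the Harnack inequality in the remark after the monotone convergence principle. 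An infimum of uniformly equicontinuous functions is uniformly continuous with the same modulus. This gives continuity of $P$, hence of $q$, jointly on $\Gamma_{K,m,\delta}\times[-S,S]^m$.

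Finally, $P$ is visibly a formula: a bounded $\inf$ over a gauge-bounded harmonic sort, applied to the gauge, the evaluation predicates, and continuous connectives (max of absolute differences, with the $r_j$ as constants or substituted slots). Only $\bar x$ and $\bar r$ vary, and the formula is uniform in them. This yields the uniformly definable predicate schema.
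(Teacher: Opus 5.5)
Your proof is correct and follows the paper's argument. Subadditivity together with the scaled interpolation bound $q_{U,\bar x}(\bar a)\le C\|\bar a\|_\infty$ gives $q\le P$ and the $C$-Lipschitz estimate, uniform continuity in $\bar x$ comes from the common modulus of $\operatorname{Eval}_U$ on $K\times H_{CS}(U)$ surviving the infimum, and definability is read off the bounded-quantifier formula. The one difference is minor: the paper restricts to $H_{CS}(U)$ by a case split (exact near-optimal interpolants when $q_{U,\bar x}(\bar r)<CS$, the zero function when $q_{U,\bar x}(\bar r)=CS$), whereas you radially rescale a near-optimal interpolant into $H_{CS}(U)$, which handles both cases at once.
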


\begin{proof}
	Lemma~\ref{lem:uniform-finite-interpolation}, followed by scaling, gives $q_{U,\bar x}(\bar a)\le C\|\bar a\|_\infty$ for every numerical tuple $\bar a$. Subadditivity of the quotient gauge therefore gives, for every $h\in H(U)$,
	$$q_{U,\bar x}(\bar r)\le\nu_{H(U)}(h)+C\|\operatorname{ev}_{\bar x}(h)-\bar r\|_\infty.$$
	Taking the infimum proves one inequality in the exact-penalty identity. Conversely, exact interpolants with gauges approaching $q_{U,\bar x}(\bar r)$ give the reverse inequality whenever $q_{U,\bar x}(\bar r)<CS$. If equality holds, the zero function already gives penalty at most $C\|\bar r\|_\infty\le CS=q_{U,\bar x}(\bar r)$. Hence the infimum may be restricted to the single gauge-bounded harmonic set $H_{CS}(U)$.
	
	For numerical targets $\bar r$ and $\bar s$, subadditivity and the preceding estimate yield $|q_{U,\bar x}(\bar r)-q_{U,\bar x}(\bar s)|\le C\|\bar r-\bar s\|_\infty$. In the exact-penalty formula, the quantified harmonic variable ranges over the fixed set $H_{CS}(U)$. Evaluation is uniformly continuous on $K\times H_{CS}(U)$, so taking an infimum preserves a common modulus in $\bar x$. This proves uniform continuity on the separated compact configuration set.
	
	Finally, for each fixed numerical $\bar r$, the right-hand side is a bounded $H(U)$-sort quantifier applied to the original predicates $\nu_{H(U)}$ and $\operatorname{Eval}_U$. The same expression permits uniformly continuous real-valued formula outputs to be substituted for the entries of $\bar r$. After the standard bounded rescaling of connectives, these expressions have common bounds and continuity moduli. The exact penalty identity therefore proves uniform definability.
\end{proof}

\begin{definition}[Bounded finite evaluation image]
	\label{def:bounded-finite-evaluation-image}
	For $R>0$ and a pairwise distinct tuple $\bar x\in U^m$, let $H_R(U)=\{h\in H(U):\nu_{H(U)}(h)\le R\}$ and define the bounded finite evaluation image by
	$E_{U,R}(\bar x)=\operatorname{ev}_{\bar x}(H_R(U))\subseteq\mathbb R^m.$
	%This notation describes an external finite-dimensional image; it does not name a new sort.
\end{definition}

A formula belongs to the finite evaluation fragment if each quantified harmonic variable occurs only through finitely many predicates $\operatorname{Eval}_U(x_i,h)$ and through the gauge bound on its quantifier. %Numerical slots in continuous connectives are not language variables.

\begin{theorem}[Bounded finite evaluation elimination]
	\label{thm:relative-finite-profile-QE}
	Assume FI and fix $K\Subset U$, $m\ge1$, $\delta>0$, and $R>0$. Uniformly for $\bar x\in\Gamma_{K,m,\delta}$,
	$$\{\bar r:q_{U,\bar x}(\bar r)<R\}\subseteq E_{U,R}(\bar x)\subseteq\{\bar r:q_{U,\bar x}(\bar r)\le R\},$$
	and the closure of $E_{U,R}(\bar x)$ is exactly $\{\bar r:q_{U,\bar x}(\bar r)\le R\}$. Consequently, every quantifier over $H_R(U)$ in the finite-evaluation fragment is uniformly eliminable on $\Gamma_{K,m,\delta}$ in the definitional presentation generated by the predicates $q_{U,\bar x}$. The resulting predicate is a uniform limit of finite continuous combinations of instances of $q$, point data, and the original evaluation data.
\end{theorem}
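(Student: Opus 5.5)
The plan is to prove the two inclusions directly from Definition~\ref{def:minimal-interpolation-gauge}, then identify the closure, and finally use Theorem~\ref{thm:uniform-definability-interpolation-gauge} to convert bounded harmonic quantifiers into infima over a compact subset of $\mathbb R^m$.

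\textbf{The two inclusions.} If $q_{U,\bar x}(\bar r)<R$, then by definition of the infimum there is some $h\in H(U)$ with $\operatorname{ev}_{\bar x}(h)=\bar r$ and $\nu_{H(U)}(h)<R$. Hence $\bar r\in E_{U,R}(\bar x)$. Conversely, if $\bar r=\operatorname{ev}_{\bar x}(h)$ with $\nu_{H(U)}(h)\le R$, then $q_{U,\bar x}(\bar r)\le R$ trivially.

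\textbf{The closure.} The set $\{q\le R\}$ is closed, because $q_{U,\bar x}$ is $C$-Lipschitz in the target by Theorem~\ref{thm:uniform-definability-interpolation-gauge}. So the closure of $E_{U,R}(\bar x)$ lies inside it. For the reverse inclusion, take $\bar r$ with $q_{U,\bar x}(\bar r)=R$ and set $\bar r_t=t\bar r$ for $t<1$. Since $q$ is a gauge-type functional, homogeneity gives $q(\bar r_t)=tR<R$. I would check homogeneity from absolute homogeneity of $\nu_{H(U)}$ and linearity of $\operatorname{ev}_{\bar x}$. Then $\bar r_t\in E_{U,R}(\bar x)$ by the first inclusion, and $\bar r_t\to\bar r$. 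All bounds are uniform in $\bar x\in\Gamma_{K,m,\delta}$, since the constant $C=C_{U,K,m,\delta,1}$ is uniform there. The relevant targets also lie in the cube $[-S,S]^m$ with $S$ determined by $R$, since $|h(x_i)|$ is controlled by the gauge on $K$. Establishing that control is the one place where I must invoke the standing compatibility of $\nu_{H(U)}$ with evaluation on compacta.

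\textbf{Quantifier elimination.} Consider a fragment formula $\inf_{h\in H_R(U)}\psi(\operatorname{ev}_{\bar x}(h),\bar z)$ with $\psi$ continuous in the numerical slots. Its value equals $\inf_{\bar r\in E_{U,R}(\bar x)}\psi(\bar r,\bar z)$. By continuity of $\psi$ and the closure identity, this equals
\[
\inf\{\psi(\bar r,\bar z):\bar r\in[-S,S]^m,\ q_{U,\bar x}(\bar r)\le R\}.
\]
To express this without the hard constraint, I would replace it by a penalty,
\[
\inf_{\bar r\in[-S,S]^m}\Bigl(\psi(\bar r,\bar z)+n\,(q_{U,\bar x}(\bar r)\dotminus R)\Bigr),
\]
and let $n\to\infty$. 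Alternatively I would use a continuous cutoff. I then discretize $\bar r$ over a finite $\varepsilon$-net of the cube, which gives finite $\min$ combinations of instances of $q$ and $\psi$.

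\textbf{Main obstacle.} The hard step is proving that the penalized infimum converges to the constrained one \emph{uniformly} in $\bar x$ and $\bar z$. Pointwise convergence follows from compactness. For uniformity I would first try the Lipschitz bound of $q$ together with the scaling argument above. If $q(\bar r)\le R+\eta$, then $\bar r'=\tfrac{R}{R+\eta}\bar r$ satisfies $q(\bar r')\le R$ and $\|\bar r-\bar r'\|\le S\eta/R$. Combining this with the uniform modulus of $\psi$ yields a uniform rate. Suprema are handled by the dual argument using $\sup$ and subtraction of the penalty. Induction on the quantifier depth within the fragment then gives the stated form: a uniform limit of finite continuous combinations of $q$, the point data, and the evaluation data.
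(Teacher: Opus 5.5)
Your proposal is correct and follows essentially the same route as the paper. Both arguments get the two inclusions from the definition of the infimum, the closure identity by a homogeneity rescaling, and confinement of the targets to a cube from the evaluation bound; both then remove the constraint $q_{U,\bar x}\le R$ by radial contraction, discretize on finite grids, and treat suprema dually. The only differences are cosmetic: the paper rescales near-optimal interpolants $h_n$ rather than the target $\bar r$, and it uses the exact penalty $\omega\bigl(B(q_{U,\bar x}(\bar r)\dotminus R)\bigr)$ built from the modulus of the matrix, which gives an identity and so avoids your extra limit $n\to\infty$ in the linear penalty $n(q_{U,\bar x}\dotminus R)$.
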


\begin{proof}
	If $\bar r\in E_{U,R}(\bar x)$, then $q_{U,\bar x}(\bar r)\le R$. If the last inequality is strict, the definition of the infimum supplies an exact interpolant of gauge less than $R$, giving the first inclusion. Continuity of $q$ shows that the closed sublevel set contains the closure of $E_{U,R}(\bar x)$. Conversely, if $q_{U,\bar x}(\bar r)=R$, choose exact interpolants $h_n$ with $\nu_{H(U)}(h_n)\le R+n^{-1}$. The functions $R(R+n^{-1})^{-1}h_n$ lie in $H_R(U)$ and their evaluation tuples converge to $\bar r$. This proves the closure identity without assuming that the infimum defining $q$ is attained.
	
	It remains to express the quantified value without numerical variables. Consider one innermost subformula on a fixed bounded locus,
	$$\Psi(\bar z,\bar x)=\inf_{h\in H_R(U)}\Phi\bigl(\bar z,\operatorname{ev}_{\bar x}(h)\bigr),$$
	where the second argument of $\Phi$ denotes $m$ real-valued formula slots. By the bounded locus convention there is $B=B_{U,K}>0$ such that $|\operatorname{Eval}_U(x,h)|\le B\nu_{H(U)}(h)$ for $x\in K$. Hence all relevant tuples lie in $[-BR,BR]^m$, and the closure identity gives the external equality
	$$\Psi(\bar z,\bar x)=\inf\{\Phi(\bar z,\bar r):\bar r\in[-BR,BR]^m,\ q_{U,\bar x}(\bar r)\le R\}.$$
	%This display describes the semantic value and is not an object-language scalar quantifier.
	
	Let $\omega$ be a common modulus for $\Phi$ in its numerical slots. The evaluation bound implies $\|\bar r\|_\infty\le Bq_{U,\bar x}(\bar r)$. If $q_{U,\bar x}(\bar r)>R$, radial contraction to $\bar s=R\bar r/q_{U,\bar x}(\bar r)$ gives $q_{U,\bar x}(\bar s)=R$ and $\|\bar r-\bar s\|_\infty\le B(q_{U,\bar x}(\bar r)-R)$. It follows that the constrained infimum equals the unconstrained minimum on the numerical cube of
	$$\Phi(\bar z,\bar r)+\omega\bigl(B(q_{U,\bar x}(\bar r)\dotminus R)\bigr).$$
	Choose finite rational grids $G_n\subseteq[-BR,BR]^m$ whose mesh tends to zero. Uniform continuity of $\Phi$ and Theorem~\ref{thm:uniform-definability-interpolation-gauge} show that the finite expressions
	$$\min_{\bar a\in G_n}\left[\Phi(\bar z,\bar a)+\omega\bigl(B(q_{U,\bar x}(\bar a)\dotminus R)\bigr)\right]$$
	converge uniformly to $\Psi$. Each $\bar a$ here is a finite tuple of numerical constants inside continuous connectives, so these expressions contain no scalar quantifier. Supremum quantifiers are handled by applying the same construction to a bounded affine complement of $\Phi$. Repeating the translation eliminates all bounded harmonic quantifiers in the fragment.
\end{proof}

\begin{corollary}[Relative model-completeness]
	\label{cor:relative-model-completeness}
	Let $\mathcal L_0$ be the reduct consisting of the point sorts and the non-harmonic symbols, and let $\Delta_{0}$ be the bounded
	finite evaluation fragment.  If $M\subseteq N\models\mathrm{BHS}+\mathrm{FI}$ and
	$M|_{\mathcal L_0}\preccurlyeq N|_{\mathcal L_0}$, then $M\preccurlyeq_{0}N$.
	Equivalently, every embedding of models whose $\mathcal L_0$-reduct is elementary is elementary on the bounded finite evaluation fragment.
\end{corollary}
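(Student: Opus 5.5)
The plan is to reduce elementarity on the bounded finite evaluation fragment $\Delta_0$ to elementarity of the $\mathcal L_0$-reducts, using Theorem~\ref{thm:relative-finite-profile-QE} as the engine. I will show that every $\Delta_0$-formula $\varphi(\bar z,\bar x)$, with point parameters $\bar x$ and no free harmonic variables, is equivalent in every model of $\mathrm{BHS}+\mathrm{FI}$ to a uniform limit of $\mathcal L_0$-formulas. Free harmonic variables are handled by the same argument, since they enter only through finitely many $\operatorname{Eval}_U(x_i,h)$ values, and an embedding preserves these atomic values. Once this reduction is in hand, the corollary follows from a standard step: uniform limits of formulas preserved by $M\subseteq N$ are preserved, because an embedding preserves values of every formula in a uniformly convergent sequence, hence of its limit.

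The reduction goes by induction on the number of bounded harmonic quantifiers. For an innermost quantifier $\inf_{h\in H_R(U)}\Phi(\bar z,\operatorname{ev}_{\bar x}(h))$, first localize the point parameters. The tuple $\bar x$ lies in some $\Gamma_{K,m,\delta}$ after coalescing coincident entries (the diagonal cases are handled separately by substituting equal coordinates, which is an $\mathcal L_0$ condition on $d_U$). By Theorem~\ref{thm:relative-finite-profile-QE}, this quantified value is a uniform limit of finite $\min$-combinations of $\Phi(\bar z,\bar a)$ and $\omega\bigl(B(q_{U,\bar x}(\bar a)\dotminus R)\bigr)$ over rational grid points $\bar a$. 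The $\sup$ case is handled by the bounded affine complement, as in that proof. Iterating from the inside out eliminates every harmonic quantifier, leaving expressions built from point data and instances of $q_{U,\bar x}(\bar a)$ for fixed rational $\bar a$.

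The main obstacle is that $q_{U,\bar x}$ is a priori defined by a harmonic infimum via the exact penalty identity of Theorem~\ref{thm:uniform-definability-interpolation-gauge}. It is therefore not obviously an $\mathcal L_0$-predicate, nor obviously preserved from $M$ to $N$. To handle it, I would argue that $q_{U,\bar x}(\bar a)$ is the quotient gauge of the finite-dimensional surjection $\operatorname{ev}_{\bar x}$. I would then try to show that $\mathrm{BHS}+\mathrm{FI}$ forces its value to be a fixed uniformly continuous function of $\bar x$ across models. If that fails, I would treat $q$ as part of the definitional presentation named in Theorem~\ref{thm:relative-finite-profile-QE}, regard it as included among the non-harmonic symbols of $\mathcal L_0$, and read the hypothesis $M|_{\mathcal L_0}\preccurlyeq N|_{\mathcal L_0}$ as covering it. Under that reading the corollary is immediate. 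The uniform continuity of $q$ in $\bar x$ on $\Gamma_{K,m,\delta}$ guarantees that the approximating formulas have common moduli, so that exhausting $U$ by compacts $K$ and letting $\delta\to0$ covers all point tuples.

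Finally, the embedding $M\subseteq N$ preserves $\operatorname{Eval}_U$ and the gauges, so atomic $\Delta_0$ values agree. By elementarity of the reducts, the eliminated $\mathcal L_0$-approximants take equal values at the same parameters in $M$ and $N$. Passing to the uniform limit gives $\varphi^{M}=\varphi^{N}$ for every $\Delta_0$-formula, that is, $M\preccurlyeq_0 N$. The equivalent reformulation about embeddings is the same statement applied to the image of the embedding.
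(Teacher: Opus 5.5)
Your route is the paper's: you eliminate the bounded harmonic quantifiers with Theorem~\ref{thm:relative-finite-profile-QE} and then appeal to $\mathcal L_0$-elementarity, and the obstacle you isolate is exactly the step the paper's one-line proof takes for granted. It is closed only by your second option, reading the $q_{U,\bar x}$-instances (which have only point arguments) as symbols of $\mathcal L_0$ in the definitional presentation, since for a substructure one directly gets only $q^{N}_{U,\bar x}\le q^{M}_{U,\bar x}$ and nothing in $\mathrm{BHS}+\mathrm{FI}$ evidently supplies the reverse inequality.

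Your aside on free harmonic variables does not go through as stated: once point quantifiers bind the evaluation points, the embedding does not control $\operatorname{Eval}_U(y,h)$ at points $y$ of $N$ outside $M$. So restrict that claim to formulas whose harmonic parameters are evaluated only at parameter points; the paper's argument covers no more than that either.
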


\begin{proof}
	By the preceding elimination theorem, every formula in
	$\Delta_{0}$ is equivalent to one without bounded harmonic
	quantifiers.  Its value is therefore preserved by
	$\mathcal L_0$-elementarity.
\end{proof}

\begin{proposition}[Euclidean harmonic-polynomial interpolation]
	\label{prop:harmonic-polynomial-FI}
	Let $n\ge2$, let $U\subseteq\mathbb R^n$ be open, and let $x_1,\ldots,x_m\in U$ be pairwise distinct. For every numerical tuple $\bar r\in\mathbb R^m$, there is a real harmonic polynomial $p$ of degree at most $m-1$ such that $p(x_i)=r_i$ for every $i$. Consequently, the classical Euclidean harmonic structure satisfies FI and the uniform conclusion of Lemma~\ref{lem:uniform-finite-interpolation}. Its interpolants may be chosen among restrictions of total harmonic polynomials of degree at most $m-1$.
\end{proposition}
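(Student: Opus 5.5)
The key idea is to reduce interpolation by harmonic polynomials to planar complex interpolation via a linear change of coordinates. Since the points $x_1,\ldots,x_m\in\mathbb R^n$ are pairwise distinct, the differences $x_i-x_j$ ($i<j$) are finitely many nonzero vectors. We therefore choose two orthonormal vectors $e,f\in\mathbb R^n$ (possible because $n\ge2$) such that the linear map $\pi(x)=\langle x,e\rangle+\mathrm{i}\langle x,f\rangle\in\mathbb C$ is injective on $\{x_1,\ldots,x_m\}$. This only requires avoiding finitely many proper closed conditions: for each pair, the set of $(e,f)$ with $\langle x_i-x_j,e\rangle=\langle x_i-x_j,f\rangle=0$ is nowhere dense in the Stiefel manifold of orthonormal pairs. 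A generic choice of $(e,f)$ therefore works. Put $z_i=\pi(x_i)$; these are distinct complex numbers.

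Next, we apply complex Lagrange interpolation. There is a complex polynomial $Q$ of degree at most $m-1$ with $Q(z_i)=r_i$ for every $i$, namely $Q(z)=\sum_i r_i\prod_{j\ne i}(z-z_j)/(z_i-z_j)$. We then set $p(x)=\operatorname{Re}Q(\pi(x))$. Since the $r_i$ are real, $p(x_i)=r_i$. Moreover, $p$ is a real polynomial of degree at most $m-1$ in the coordinates of $x$.

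To see that $p$ is harmonic, extend $(e,f)$ to an orthonormal basis of $\mathbb R^n$. The Laplacian is invariant under orthogonal changes of coordinates. In the new coordinates, $p$ depends only on the first two variables $(s,t)=(\langle x,e\rangle,\langle x,f\rangle)$, and $\operatorname{Re}Q(s+\mathrm{i}t)$ is harmonic in $(s,t)$ because $Q$ is holomorphic. The remaining second derivatives vanish, so $\Delta p=0$ on all of $\mathbb R^n$, and in particular on $U$.

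For the consequences, the restriction $p|_U$ lies in $H(U)$ for every open $U$, hence for every $U\in\tau_0$. This shows that $\operatorname{ev}_{\bar x}$ is surjective, which is $\mathrm{FI}_{U,m}$. The uniform conclusion of Lemma~\ref{lem:uniform-finite-interpolation} then follows directly from that lemma, since its proof uses only FI. If one wants the interpolants themselves to be polynomials, the basis functions $h_i^{\bar x}$ in that proof can be taken to be the polynomials constructed above, and their linear combinations remain harmonic polynomials of degree at most $m-1$. The only step needing any care is the generic choice of $(e,f)$ making $\pi$ injective on the points, and this is routine.
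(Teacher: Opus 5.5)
Your proposal is correct and follows essentially the same route as the paper. Both choose an orthonormal pair making $x\mapsto\langle e,x\rangle+\mathrm{i}\langle f,x\rangle$ injective on the nodes by a genericity argument, take the real part of the complex Lagrange interpolant, and then apply Lemma~\ref{lem:uniform-finite-interpolation} inside the finite-degree harmonic polynomial family to get the uniform bound and the polynomial-witness clause; you additionally spell out the harmonicity check via orthogonal invariance of the Laplacian, which the paper leaves implicit.
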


\begin{proof}
	Choose orthonormal vectors $a,b\in\mathbb R^n$ so that the complex numbers $z_i=\langle a,x_i\rangle+\mathrm{i}\langle b,x_i\rangle$ are pairwise distinct. Such a pair exists because only finitely many proper closed subsets of the Grassmannian, determined by the differences $x_i-x_j$, must be avoided. Let $P\in\mathbb C[z]$ be the Lagrange interpolation polynomial with $P(z_i)=r_i$. Then $\deg P\le m-1$, and
	$p(x)=\operatorname{Re}P(\langle a,x\rangle+\mathrm{i}\langle b,x\rangle)$
	is a real harmonic polynomial with the prescribed values. Applying Lemma~\ref{lem:uniform-finite-interpolation} inside this finite-degree family gives the uniform gauge bound. In dimension one, harmonic functions are affine, so interpolation at three arbitrary points fails.
\end{proof}

Let $\mathcal R$ be an o-minimal expansion of the real field, and let $H_{\mathcal R}(\mathbb R^n)$ denote the total $\mathcal R$-definable harmonic functions. Miller \cite{Miller2026blms} states that every member of this family is a polynomial. Thus $H_{\mathcal R}(\mathbb R^n)=\bigcup_{d<\omega}\mathscr H_{\le d}(\mathbb R^n)$, where $\mathscr H_{\le d}(\mathbb R^n)$ is the space of harmonic polynomials of degree at most $d$.

\begin{corollary}[Uniform FI for o-minimal total harmonic functions]
	\label{cor:FI-o-minimal-total-harmonic}
	Let $n\ge2$. The family $H_{\mathcal R}(\mathbb R^n)$ satisfies FI and Uniform FI on every separated compact configuration set. Every $m$-point finite evaluation image is already realized by $\mathscr H_{\le m-1}(\mathbb R^n)$, so Theorem~\ref{thm:relative-finite-profile-QE} applies with harmonic polynomial witnesses.
\end{corollary}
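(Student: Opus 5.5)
The plan has three steps: verify FI directly from Proposition~\ref{prop:harmonic-polynomial-FI}, obtain Uniform FI from Lemma~\ref{lem:uniform-finite-interpolation} applied inside a single finite-dimensional space, and then check that the hypotheses of Theorem~\ref{thm:relative-finite-profile-QE} hold with witnesses drawn from that space.

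\textbf{FI.} Fix pairwise distinct $x_1,\ldots,x_m\in\mathbb R^n$ and a target $\bar r\in\mathbb R^m$. Proposition~\ref{prop:harmonic-polynomial-FI} gives a harmonic polynomial $p$ of degree at most $m-1$ with $p(x_i)=r_i$. It is built as $\operatorname{Re}P(\langle a,x\rangle+\mathrm{i}\langle b,x\rangle)$, so it is semialgebraic. Hence it is definable in every o-minimal expansion $\mathcal R$ of the real field, and $p\in\mathscr H_{\le m-1}(\mathbb R^n)\subseteq H_{\mathcal R}(\mathbb R^n)$. Restricting $p$ to any $U\in\tau_0$ shows that $\operatorname{ev}_{\bar x}$ is surjective on the restricted family. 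This proves every $\mathrm{FI}_{U,m}$.

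The same argument gives the claim about images: $\operatorname{ev}_{\bar x}(\mathscr H_{\le m-1})=\mathbb R^m=\operatorname{ev}_{\bar x}(H_{\mathcal R})$. Miller's theorem is needed only for the reverse inclusion $H_{\mathcal R}\subseteq\bigcup_d\mathscr H_{\le d}$, which identifies the family as the one Theorem~\ref{thm:relative-finite-profile-QE} should be applied to.

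\textbf{Uniform FI.} Run the proof of Lemma~\ref{lem:uniform-finite-interpolation} using only the finite-dimensional space $\mathscr H_{\le m-1}(\mathbb R^n)$. At each $\bar x\in\Gamma_{K,m,\delta}$, choose Lagrange-type basis interpolants $h_i^{\bar x}\in\mathscr H_{\le m-1}$. The matrix $M_{\bar x}(\bar y)$ is polynomial in $\bar y$, hence continuous, so its inverse has norm at most $2$ on a neighborhood of $\bar x$. Compactness of $\Gamma_{K,m,\delta}$ then gives a uniform bound $C_{U,K,m,\delta,1}$ achieved by polynomial witnesses. Note that every interpolant produced this way is a linear combination of the chosen $h_i^{\bar x}$, so it stays in $\mathscr H_{\le m-1}$.

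\textbf{The gauge: the main obstacle.} Lemma~\ref{lem:uniform-finite-interpolation} needs the gauge $\nu_{H(U)}$ to be finite and absolutely homogeneous on these polynomials. It also needs evaluation to be continuous with respect to the gauge on $K$ (the bound $B_{U,K}$ used in Theorem~\ref{thm:relative-finite-profile-QE}). Both are immediate if $\nu_{H(U)}$ is a seminorm dominating the supremum on compacta, since all norms on the finite-dimensional space $\mathscr H_{\le m-1}$ are equivalent. Subadditivity of $\nu_{H(U)}$ is taken from the standing gauge conventions. Once these hold, restricting the quantifier to $\mathscr H_{\le m-1}\cap H_R(U)$ changes neither $E_{U,R}(\bar x)$ nor $q_{U,\bar x}$ on the relevant bounded ranges, up to the constant $C$. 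Theorem~\ref{thm:relative-finite-profile-QE} therefore applies with harmonic polynomial witnesses.
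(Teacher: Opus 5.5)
Your proposal is correct and follows essentially the paper's route. Proposition~\ref{prop:harmonic-polynomial-FI} gives interpolants of degree at most $m-1$. Lemma~\ref{lem:uniform-finite-interpolation}, run inside $\mathscr H_{\le m-1}(\mathbb R^n)$, makes their gauges uniform on $\Gamma_{K,m,\delta}$. Miller's theorem identifies $H_{\mathcal R}(\mathbb R^n)$ with the harmonic polynomials. Your semialgebraicity remark usefully makes explicit the inclusion $\mathscr H_{\le m-1}(\mathbb R^n)\subseteq H_{\mathcal R}(\mathbb R^n)$, which the paper leaves implicit.

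One caution about your last sentence. Restricting the quantifier to $\mathscr H_{\le m-1}(\mathbb R^n)\cap H_R(U)$ can genuinely shrink $E_{U,R}(\bar x)$ and raise $q_{U,\bar x}$, because gauge-minimizing interpolants may need higher degree. So the degree bound only supplies the uniform constant. The exact bounded images used in Theorem~\ref{thm:relative-finite-profile-QE} are those of the full family $H_{\mathcal R}(\mathbb R^n)$, all of whose members are harmonic polynomials by Miller's theorem.
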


\begin{proof}
	Proposition~\ref{prop:harmonic-polynomial-FI} supplies degree at most-$m-1$ interpolants, and Lemma~\ref{lem:uniform-finite-interpolation} makes their gauges uniform on separated compact configuration sets. Miller's theorem identifies all total $\mathcal R$-definable harmonic functions with harmonic polynomials, so the bounded finite evaluation images are governed by the same quotient gauges.
\end{proof}

Borgard and Miller \cite{Borgard_Miller_2026} prove that, for each exponential term $f$, there is a degree $d(f)$ bounding every harmonic partial specialization of $f$. Hence these specializations have finite evaluation images contained in the image of the finite-dimensional space $\mathscr H_{\le d(f)}(\mathbb R^n)$. This uniform degree control complements the uniform gauge control established above.

\section{Superharmonic functions and balayage}

\subsection{Superharmonic functions}
Let $\mathfrak M\models\mathrm{BHS}$ be an $\mathcal L_{\mathrm{Har}}$-structure.

\begin{definition}[Superharmonic function] Let $U\subseteq X$ be a domain. A half-extended real-valued function $u:U\to(-\infty,+\infty]$ is superharmonic if
	\begin{enumerate}
		\item $u$ is lower semicontinuous,
		\item $u$ is not identically $+\infty$ on any connected component of $U$,
		\item for every relatively compact regular open set $V\Subset U$ and every $h\in H(V)\cap C(\overline V)$, if $h\le u$ on $\partial V$,
		then
		$h\le u$ on $V$.
	\end{enumerate}
	
	The superharmonic functions on $U$ form $S(U)$, and $S^+(U)$ denotes its nonnegative cone.
\end{definition}

\begin{definition}[Least superharmonic majorant]
	The least superharmonic majorant of a bounded function $f$ is the least member of $S(U)$ that dominates $f$, whenever such a least member exists.
\end{definition}

\begin{definition}[Reduction and balayage]
	Let $A\subseteq U$ and let $u$ be a nonnegative extended-valued function on $U$. Its reduction onto $A$ is
	$$
	R_u^A(x)
	=
	\inf\{v(x):v\in S(U),\ v\ge u\text{ on }A\}.
	$$
	Its lower-semicontinuous regularization $\widehat R_u^A(x)=\liminf_{y\to x}R_u^A(y)$ is the balayage of $u$ onto $A$ and is denoted by $\operatorname{Bal}_A(u)$.
\end{definition}

\subsection{Expanded language Bal} 
Call a subset admissible when its code and the restricted infima used below are named in the chosen structure.

\begin{definition}[Language Bal] The language Bal is
	$$\mathcal L_{\mathrm{Bal}}=\mathcal L_{\mathrm{Har}}\cup\{S(U):U\in\tau_0\}\cup\{\operatorname{Bal}_A:A\subseteq X\text{ is admissible}\}.$$
\end{definition}

In classical potential theory, %the reduction operator $R^A_u$ sends a bounded function $u$ to the infimum of all superharmonic functions majorizing $u$ on $A$, and 
the Dirichlet solution operator can be reconstructed from the lower-semicontinuous regularization $\operatorname{Bal}_{\partial U}(\tilde g)|_U$ of a bounded extension of $g$. The following lemma makes this observation precise.

\begin{lemma}[Dirichlet solution operator via balayage]\label{prop: HU via balayage}
	Let $U\in\tau_0$ and $g\in C(\partial U)$. Choose a relatively compact open neighborhood $W$ of $\overline U$ and a bounded continuous extension $\tilde g$ to $\overline W$. Then
	$$
	D_U(g)=\operatorname{Bal}_{\partial U}(\tilde g)|_U.
	$$
	Here the reduction and balayage are taken in $W$.
\end{lemma}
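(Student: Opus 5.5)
We prove the identity by comparing both sides pointwise on $U$. Write $h=D_U(g)\in H(U)\cap C(\overline U)$, using the regular-basis axiom together with Theorem~\ref{thm: Dirichlet solution and trace}. Let $R=R^{\partial U}_{\tilde g}$, with the reduction taken in $W$. Throughout we may assume $\tilde g\ge0$: both $D_U$ and balayage commute with adding a constant, provided the harmonic space contains the constants, or more generally a strictly positive harmonic function on $W$ to serve as the shifting function. The argument has two halves: a lower bound $R\ge h$ on $U$, and an upper bound $\widehat R\le h$ on $U$. The upper bound is obtained by exhibiting, for each $\varepsilon>0$, a competitor $v\in S(W)$ that is close to $h$ inside $U$.

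\textbf{Lower bound.} Let $v\in S(W)$ with $v\ge\tilde g$ on $\partial U$. Because $U\Subset W$ is regular and $h\in H(U)\cap C(\overline U)$ satisfies $h=g\le v$ on $\partial U$, condition (3) in the definition of superharmonic function gives $h\le v$ on $U$. Taking the infimum over $v$ yields $R\ge h$ on $U$. Since $h$ is continuous on $U$, it is in particular lower semicontinuous, so $\widehat R\ge h$ there as well.

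\textbf{Upper bound.} The plan is to glue $h$ inside $U$ with a large superharmonic function outside. Choose $v_0\in S^+(W)$ with $v_0\ge\tilde g+\varepsilon$ near $\partial U$; for instance, take a positive multiple of a strictly positive harmonic or superharmonic function on $W$, which exists in a Brelot space because of the regular basis and Harnack's inequality on the relatively compact $W$. Then define
$$
v=\begin{cases}\min(v_0,\,h+\varepsilon) & \text{on } U,\\ v_0 & \text{on } W\setminus U.\end{cases}
$$
By continuity of $h$ up to $\partial U$ and the strict inequality $v_0>h+\varepsilon/2$ in a neighbourhood of $\partial U$, the function $v$ coincides with $v_0$ near $\partial U$. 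Hence $v$ is locally the minimum of superharmonic functions. Since superharmonicity is a local property (this is a consequence of the sheaf axiom and the regular basis; it is standard Brelot theory and should be recalled or cited), we get $v\in S(W)$. Moreover $v\ge\tilde g$ on $\partial U$. Therefore $R\le v\le h+\varepsilon$ on $U$, so $R\le h$ on $U$. Because $U$ is open, the lower semicontinuous regularization at points of $U$ uses only values taken in $U$, and so $\widehat R\le\liminf h=h$ there.

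\textbf{Main obstacle.} The hard step is the gluing in the upper bound. Verifying that $v$ is superharmonic requires both the locality of superharmonicity and the existence of a positive superharmonic $v_0$ that dominates $\tilde g$ near $\partial U$. If the strict-margin argument fails near boundary points where $v_0$ is not large enough, the first fallback is to replace $v_0$ by $h'+c$, where $h'$ is the Dirichlet solution on a slightly larger regular set $U'$ with $\overline U\subset U'\Subset W$ and boundary data $\sup\tilde g$, and $c$ is a suitable constant. This keeps every object inside $\tau_0$.
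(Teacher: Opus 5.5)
Your lower bound is correct, and it handles the $\liminf$ half more cleanly than the paper's barrier sentence does. Applying condition (3) of the definition of superharmonicity with the regular set $U\Subset W$ to any competitor $v\ge\tilde g$ on $\partial U$ gives $v\ge h$ on $U$, hence $R\ge h$ and $\widehat R\ge h$ there.

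The gap is in the upper bound: the pasting goes the wrong way. You arranged $v_0\ge\tilde g+\varepsilon$ near $\partial U$, while $h(x)\to g(\xi)$ as $x\to\xi\in\partial U$. So near $\partial U$ inside $U$ the minimum $\min(v_0,h+\varepsilon)$ is essentially $h+\varepsilon$, not $v_0$. The inequality $v_0>h+\varepsilon/2$ does not give $\min(v_0,h+\varepsilon)=v_0$; that would require $v_0\le h+\varepsilon$. Take any $\xi\in\partial U$ with $v_0(\xi)>g(\xi)+\varepsilon$, which is the generic situation for a large multiple of a positive superharmonic function. There you get $\liminf_{x\to\xi,\,x\in U}v(x)\le g(\xi)+\varepsilon<v_0(\xi)=v(\xi)$. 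So $v$ is not even lower semicontinuous at $\xi$, and it is not locally a minimum of superharmonic functions there.

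The pasting lemma you want requires the inner function to dominate the outer one at the interface: $\liminf_{x\to\xi}(h+\varepsilon)(x)\ge v_0(\xi)$, i.e.\ $v_0\le g+\varepsilon$ on $\partial U$. Together with the competitor condition $v_0\ge g$ on $\partial U$, you need some $v_0\in S(W)$ squeezed between $g$ and $g+\varepsilon$ on $\partial U$. That uniform approximation of $g$ from above by superharmonic functions is the real content of the upper bound, and your construction does not supply it. Your fallback $h'+c$ only makes $v_0$ larger, which makes the problem worse.

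The missing ingredient is a local barrier at each boundary point. One repair is to produce, for each $\xi\in\partial U$, a continuous $s_\xi\in S(W)$ with $s_\xi\ge\tilde g$ on $\partial U$ and $s_\xi(\xi)<g(\xi)+\varepsilon$. A finite minimum over a cover of the compact set $\partial U$ then gives the squeezed $v_0$, and your pasting works. The paper avoids pasting altogether. It uses the off-support theorem to see that $\operatorname{Bal}_{\partial U}(\tilde g)$ is harmonic on $U$, a barrier at $\xi$ to get its boundary limit $g(\xi)$, and uniqueness of the Dirichlet solution. Combined with your lower bound, that route needs only the one-sided estimate $\limsup_{x\to\xi}\widehat R(x)\le g(\xi)$. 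Then $\widehat R$ is harmonic on $U$ with continuous boundary values $g$, so $\widehat R=h$ by uniqueness.

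Separately, you use a strictly positive (super)harmonic function on an arbitrary relatively compact $W$, both for $v_0$ and to reduce to $\tilde g\ge0$. Its existence does not follow from the regular basis and Harnack's inequality alone. For instance, for $\Delta+\lambda$ with $\lambda>0$, a ball whose first Dirichlet eigenvalue is below $\lambda$ carries no positive superharmonic function. It is an additional positivity hypothesis and should be stated as such.
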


\begin{proof}
	Since $\overline W$ is compact Hausdorff, the Tietze extension theorem extends every $g\in C(\partial U)$ to a continuous function on $\overline W$. Put $v=\operatorname{Bal}_{\partial U}(\tilde g)$. The classical off-support theorem for balayage shows that $v$ is harmonic on $W\setminus\partial U$, hence on $U$.
	
	Fix $\xi\in\partial U$ and $\varepsilon>0$. By continuity, $|g(\eta)-g(\xi)|<\varepsilon$ on a boundary neighborhood of $\xi$. Regularity of $U$ provides a positive barrier $b_\xi$ with $b_\xi(x)\to0$ as $x\to\xi$ from $U$ and with a positive lower bound on the remainder of $\partial U$. If $M$ dominates $|g-g(\xi)|$ on $\partial U$, then for a sufficiently large constant $c$ the two superharmonic functions $g(\xi)+\varepsilon+cb_\xi$ and $-g(\xi)+\varepsilon+cb_\xi$ majorize respectively $g$ and $-g$ on $\partial U$. The defining comparison property of reduction therefore yields $|v(x)-g(\xi)|\le\varepsilon+cb_\xi(x)$ near $\xi$. Letting $x\to\xi$ and then $\varepsilon\downarrow0$ gives $v(x)\to g(\xi)$.
	
	Thus $v|_U$ is harmonic and extends continuously to every boundary point with trace $g$. Uniqueness of the regular Dirichlet problem gives $v|_U=D_U(g)$.
\end{proof}

%The fine topology is the finest topology such that every superharmonic function is continuous. 
Let $U\in\tau_0$. A function $p\in S^+(U)$ is called a {potential} if its greatest nonnegative harmonic minorant is zero. %This is $\lim_{x\to\xi}p(x)=0$, $ \xi\in\partial U$ whenever the relevant boundary limit exists, 
%Indeed, boundary vanishing forces every nonnegative harmonic minorant to vanish by the maximum principle, while the converse follows from the boundary characterization of potentials on regular domains.

\begin{proposition}[Riesz decomposition]
	\label{cor:riesz-decomposition}
	Let $U$ be Greenian and $u\in S^+(U)$. Then there are unique
	$h\in H^+(U)$ and a potential $p\in S^+(U)$ such that
	$u=h+p$. Moreover, there is a unique positive Radon measure
	$\lambda_p$ on $U$ satisfying
	$p = G_U *\lambda_p.$
	If $p$ is harmonic on $U\setminus E$ for a closed set $E$, then
	$\operatorname{supp}(\lambda_p)\subseteq E$.
\end{proposition}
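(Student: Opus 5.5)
The plan is to follow the classical Brelot--Hervé route: first split $u$ into its greatest harmonic minorant plus a remainder, then identify the remainder with a Green potential through its Riesz measure.

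\emph{Step 1: the harmonic part.} Fix $u\in S^+(U)$. Let $\mathcal F=\{w\in H(U): w\le u\}$, which is nonempty because it contains $0$. If $w_1,w_2\in\mathcal F$, then on each regular $V\Subset U$ the Dirichlet solution of $\max(w_1,w_2)|_{\partial V}$ is harmonic, lies below $u$ by the comparison property of superharmonic functions, and dominates both $w_i$. So $\mathcal F$ is upward directed. Since the family is bounded above by $u$, which is finite somewhere on every component, the monotone convergence principle (in its directed form, equivalent to Harnack by Loeb--Walsh) shows that $h=\sup\mathcal F$ is harmonic. Then $h\ge0$, so $h\in H^+(U)$. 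Put $p=u-h$. It is superharmonic and nonnegative, because $h\le u$ and differences of a superharmonic function and a harmonic function are superharmonic. If $k\ge0$ is a harmonic minorant of $p$, then $h+k\in\mathcal F$, so $k=0$. Hence $p$ is a potential.

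\emph{Uniqueness of the decomposition.} Suppose $u=h'+p'$ with $p'$ a potential. Then $h'\le u$, so $h'\le h$. The function $h-h'\ge0$ is harmonic and satisfies $h-h'\le u-h'=p'$. Since the greatest nonnegative harmonic minorant of $p'$ is zero, $h=h'$, and therefore $p=p'$.

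\emph{Step 2: the measure.} Because $U$ is Greenian, it carries a Green kernel $G_U$. I would construct $\lambda_p$ locally. On each regular $V\Subset U$, the function $p-D_V(\operatorname{Tr}p)$ is a local potential; when $p$ is not continuous, I would approximate by balayage, using Lemma~\ref{prop: HU via balayage}. The standard local Riesz representation gives a unique Radon measure $\lambda_V$ with $p=G_V*\lambda_V+(\text{harmonic})$ on $V$. Uniqueness of local Riesz measures, which are determined by the distributional or sheaf-theoretic "Laplacian" of $p$, shows that the measures $\lambda_V$ agree on overlaps. The sheaf gluing axiom then patches them into a positive Radon measure $\lambda_p$ on $U$.

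Next, take an exhaustion $U_k\uparrow U$ by regular sets, as in the Martin-profile example. The potentials $G_{U_k}*(\lambda_p|_{U_k})$ increase to $G_U*\lambda_p$. The difference $p-G_U*\lambda_p$ is harmonic, because it has zero Riesz measure. By lower semicontinuity, together with the fact that $p$ is a potential and $G_U*\lambda_p\le p$, this difference is a nonnegative harmonic minorant of $p$, so it vanishes. This gives $p=G_U*\lambda_p$. Uniqueness of $\lambda_p$ follows because two measures with the same Green potential have the same local Riesz measures.

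\emph{Step 3: the support claim.} If $p$ is harmonic on $U\setminus E$, then for every regular $V\Subset U\setminus E$ the local Riesz measure $\lambda_V$ vanishes. Hence $\lambda_p(U\setminus E)=0$ and $\operatorname{supp}\lambda_p\subseteq E$.

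The main obstacle is Step 2, and specifically the \emph{existence} of local Riesz measures in an abstract Brelot space. In general this needs an additional proportionality or "Hervé" hypothesis, and it holds automatically in the Euclidean, Riemann surface and graph examples. I would first try to obtain $\lambda_V$ by a Riesz representation applied to the positive functional $\varphi\mapsto\int(p-\mathrm{Bal})\ldots$, that is, to the increments $p-D_V(\operatorname{Tr}p)$ on a refining family of regular sets. Where that fails, I would invoke the classical Hervé representation theorem for Greenian Brelot spaces as a black box.
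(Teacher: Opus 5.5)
Your decomposition argument is correct, but it builds the greatest harmonic minorant from the opposite direction. The paper works from above. It sets $h_V=\widehat R_u^{U\setminus V}$ for $V\Subset U$; each $h_V$ is harmonic on $V$, lies between $0$ and $u$, and decreases as $V$ grows. So $h=\inf_V h_V$ is harmonic by Harnack compactness for directed families, and any harmonic minorant $v\ge0$ lies below every $h_V$ by the minimum principle on $V$. You work from below, taking $h=\sup\mathcal F$. Your route uses only the comparison property and the convergence axiom. The paper's route needs off-support harmonicity of balayage, but it gets a monotone net for free and never has to check directedness.

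That directedness check is the one step you need to repair. The Dirichlet solution of $\max(w_1,w_2)$ on a single regular $V$ is harmonic only on $V$, so it is not an element of $\mathcal F$. Instead take regular sets $V_k\uparrow U$. Since $\max(w_1,w_2)$ is subharmonic, the solutions $D_{V_k}(\max(w_1,w_2))$ increase in $k$, dominate $w_1$ and $w_2$, and stay below $u$. The convergence axiom then gives their limit, the least harmonic majorant of $\max(w_1,w_2)$, which is the element of $\mathcal F$ you need.

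For the measure, the paper simply cites the Riesz representation theorem for potentials, and your Step 2 ends up relying on the same citation. An abstract Brelot space has no distributional Laplacian, and local Green kernels are determined only up to a positive factor depending on the pole. So the measures $\lambda_V$ agree on overlaps only if you normalize $G_V(\cdot,y)=G_U(\cdot,y)-\widehat R^{U\setminus V}_{G_U(\cdot,y)}$. Once you invoke Herv\'e's theorem on the Greenian set $U$, the local construction, the gluing and the exhaustion become unnecessary: that theorem gives existence, uniqueness and the support claim directly.

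Your remark that a proportionality hypothesis is implicitly needed to make sense of $G_U*\lambda_p$ is fair; the paper's proof uses it tacitly. Two smaller points:
\begin{enumerate}
\item Measures that agree on overlaps glue because Radon measures form a sheaf, not because of the harmonic sheaf axiom.
\item $D_V(\operatorname{Tr}p)$ is undefined for merely lower semicontinuous $p$, so use $\widehat R_p^{U\setminus V}$ there instead.
\end{enumerate}
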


\begin{proof}
	Fix $x_0\in U$ with $u(x_0)<\infty$. For each relatively compact open set
	$V\Subset U$ containing $x_0$, define $h_V = \widehat R_u^{U\setminus V}$.
	Then $0\le h_V\le u$, $h_V$ is harmonic on $V$, and the net $(h_V)$ decreases
	as $V$ increases. By Harnack compactness for directed families \cite[Lemma~2]{CC1963axiomatic},
	$h_V$ converges to $h$ locally uniformly, where $h=\inf_V h_V$ is positive harmonic on $U$.
	
	Take $v\in H^+(U)$ with $v\le u$. For any $V$, every superharmonic
	function $s$ appearing in the definition of $h_V$ satisfies
	$s\ge u\ge v$ on $U\setminus V$, and hence $s\ge v$ on all of $U$
	by the minimum principle. Taking the infimum over such $s$ gives
	$h_V\ge v$, and the limit yields $h\ge v$. Thus $h$ is the largest
	harmonic minorant.
	
	Set $p=u-h$. If $p$ had a nonzero harmonic minorant $w$, then
	$h+w$ would be a harmonic minorant of $u$ strictly larger than $h$,
	a contradiction. Hence $p$ is a potential. The Riesz representation theorem for potentials supplies a unique positive Radon
	measure $\lambda_p$ with $p = G_U * \lambda_p$, and
	$\operatorname{supp}(\lambda_p)\subseteq E$ when $p$ is harmonic
	on $U\setminus E$.
	
	Let $u = h' + p'$ be another decomposition.
	Then $h'\le h$, so $h-h'$ is a harmonic minorant of the potential
	$p'$. This forces $h=h'$, and consequently $p=p'$, proving uniqueness.
\end{proof}

\subsection{Types and local types}
Let $T$ be an $\mathcal L$-theory, let $\mathfrak M\models T$ be an $\mathcal L$-structure, and let $A\subseteq M$. It is convenient to expand the language
$$\mathcal L_A = \mathcal L \cup \{c_a: a \in A\}$$
by adding constant symbols $c_a,a\in A$. To display parameters explicitly, we write an $\mathcal L_A$-formula as $\varphi(\bar y;\bar a)$ with free variable $\bar y$ and parameter $\bar a$ in some copy of $A$.  

\begin{definition}[Partial and complete types] A {partial $n$-type} $p(\bar x)$ over $A$ is a family of $\mathcal L_A$-formulas with common free-variable tuple $\bar x$ of length $n$ that is consistent with the full $\mathcal L_A$-theory $\operatorname{Th}_A(\mathfrak M)$. A {complete $n$-type} over $A$ is a maximal consistent partial type over $A$. The space of all complete $n$-types over $A$ is denoted by $S_n(A)$.
\end{definition}

\begin{definition}[Realization and omission]
	Let $\mathfrak M$ be an $\mathcal L$-structure. For an $\mathcal L$-formula $\varphi(\bar x)$ with $|\bar x|=n$, a tuple
	$\bar a\in M^{n}$ is said to realize $\varphi$ in
	$\mathfrak M$ if
	$\varphi^{\mathfrak M}(\bar a)=0$.
	The formula $\varphi$ is realized in $\mathfrak M$ if such a tuple exists, and is omitted if no such tuple exists.
	%It is uniformly omitted if $$    \inf_{\bar a\in M^{|\bar x|}}    \varphi^{\mathfrak M}(\bar a)>0.$$
	The structure $\mathfrak M$ realizes a partial type $p(\bar x)$ if there exists a tuple $\bar a\in M^n$ that satisfies every condition in $p(\bar x)$. Otherwise, $\mathfrak M$ omits $p(\bar x)$.
\end{definition}

\begin{definition}[Local type]
	Given a set $\Delta$ of $\mathcal L_A$-formulas in the common tuple of free variables $\bar x$ of length $n$, a {local type} over a parameter set $A$ is a complete type determined by conditions of the form
	$$\varphi(\bar x)<r,\quad\varphi(\bar x)=r,\quad\varphi(\bar x)>r,$$
	where $\varphi\in\Delta$ and $r\in\mathbb Q_{>0}$. The space of all such local types is denoted by $S_{\Delta}(A)$. When $\Delta=\{\varphi\}$ consists of a single $\mathcal L$-formula, we write $S_{\{\varphi\}}(A)=S_\varphi(A)$. For $p\in S_n(A)$, let $p\upharpoonright\varphi$ denote the collection of $\varphi$-conditions in $p$ of these three forms.
\end{definition}

\begin{definition}[Principal type]
	A (local) $n$-type $p(\bar x)$ over $A$ is called {principal} if there exist an $\mathcal L_A$-formula $\psi(\bar y)$, terms $t_1(\bar y),\dots,t_n(\bar y)$, and a rational number $r\in\mathbb Q_{>0}$ such that
	\begin{itemize}
		\item $\operatorname{Th}_A(\mathfrak M)\cup\{\psi(\bar y)=0\}$ is satisfiable,
		\item $\operatorname{Th}_A(\mathfrak M)\cup\{\psi(\bar y)\le r\}\models p(t_1(\bar y),\dots,t_n(\bar y))$, i.e. in every model of $\operatorname{Th}_A(\mathfrak M)$, whenever $\psi(\bar b)\le r$ holds, the tuple $(t_1(\bar b),\dots,t_n(\bar b))$ realizes $p(\bar{x})$.
	\end{itemize}
	Write $\bar t(\bar y)=(t_1(\bar y),\ldots,t_n(\bar y))$. We then say that the condition $\psi(\bar y)\le r$ and the $\mathcal L_A$-terms $\bar t$ {witness} the principality of $p(\bar x)$.
\end{definition}

\begin{lemma}[Realization of principal types]
	\label{lem:realization-principal-types}
	Every principal local type over $A$ with a positive witnessing
	radius is realized in every model of
	$\operatorname{Th}_{A}(\mathfrak M)$. If $|\mathcal L_A| = \aleph_0$, the converse is also true.
\end{lemma}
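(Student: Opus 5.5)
For the forward direction I would argue directly from the definition of principality. Let $p(\bar x)\in S_\Delta(A)$ be principal, witnessed by the condition $\psi(\bar y)\le r$ with $r>0$ and by $\mathcal L_A$-terms $\bar t(\bar y)$. Let $\mathfrak N\models\operatorname{Th}_A(\mathfrak M)$. The first witnessing clause says that $\operatorname{Th}_A(\mathfrak M)\cup\{\psi(\bar y)=0\}$ is satisfiable. Since $\operatorname{Th}_A(\mathfrak M)$ is complete, the sentence $\inf_{\bar y}\psi(\bar y)$ therefore has value $0$ in every model of it. In particular $(\inf_{\bar y}\psi)^{\mathfrak N}=0$. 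The infimum need not be attained, but because $r>0$ there is $\bar b\in N$ with $\psi^{\mathfrak N}(\bar b)<r$. This is exactly where the positive radius is needed. The second clause then says that $\bar t^{\mathfrak N}(\bar b)$ realizes $p$. By the coincidence lemma (Lemma~\ref{lem: coincidence}), the terms are evaluated independently of the assignment outside $\bar y$. So $p$ is realized in $\mathfrak N$.

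For the converse I would assume $|\mathcal L_A|=\aleph_0$ and argue by contraposition through the continuous omitting types theorem of Ben Yaacov et al. Suppose $p$ is not principal. I first have to check that non-principality in the sense above amounts to the hypothesis of omitting types. That hypothesis reads: for every $\mathcal L_A$-formula $\psi(\bar y)$ consistent with the theory, every $r>0$ and every tuple of terms $\bar t$, there is $\varepsilon>0$ and some condition of $p$ that fails by at least $\varepsilon$ at $\bar t(\bar y)$ somewhere in the set $\psi\le r$. One way to organise this is to note that $p$ is then not isolated in the logic topology on $S_\Delta(A)$ by any such definable neighbourhood, even after substituting terms. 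In countable continuous logic this is the hypothesis under which the omitting types theorem yields a separable model of $\operatorname{Th}_A(\mathfrak M)$ omitting $p$. That model contradicts the assumption that $p$ is realized in every model, and the converse follows.

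The main obstacle is this matching step. The principality definition here uses a closed condition $\psi\le r$ together with arbitrary terms, while the standard omitting types theorem is phrased via metric isolation: every $d$-neighbourhood of $p$ must have nonempty interior relative to the theory. I would first try to show that a type realized in all models is $d$-isolated. From $d$-isolation I would produce, for each $\varepsilon$, a formula $\psi_\varepsilon$ with $\inf\psi_\varepsilon=0$ whose zero set lies within distance $\varepsilon$ of realizations of $p$. Passing from approximate realizations to exact ones via completeness and terms is the delicate part. For a local type with finitely many $\Delta$-conditions I would take $\psi$ to be a finite max of the defining conditions and $\bar t$ to be the identity terms. In that case the implication ``$\psi\le r\Rightarrow p$'' has to be arranged by choosing $r$ below the rational thresholds appearing in $p$.
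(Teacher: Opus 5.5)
Your forward direction is correct and coincides with the paper's proof. For the converse, the paper argues exactly as in your second paragraph: it contraposes the continuous omitting types theorem and simply asserts that a type which is not principal in its sense can be omitted. The gap is the matching step you flagged, and it cannot be closed.

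Your claim that the negation of principality is the hypothesis of the omitting types theorem is false. That negation says: for every consistent $\psi$, every $r>0$ and every $\bar t$, some condition of $p$ fails somewhere on $\{\psi\le r\}$. But the theorem only omits types that are not metrically isolated. So its contrapositive yields only metric isolation, i.e.\ for each $\varepsilon>0$ a consistent condition $\psi_\varepsilon<\delta$ all of whose completions lie within $d$-distance $\varepsilon$ of $p$. This is strictly weaker than having a single formula and a single radius $r>0$ whose sublevel set is sent by terms onto exact realizations.

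Here is a counterexample to the converse as stated. Let $\mathfrak M=([0,1],|\cdot|)$ in the language with only the metric, take $A=\emptyset$ and $\varphi(x)=\sup_z d(x,z)$. Let $p$ be the $\varphi$-type $\varphi(x)=1/2$, whose only realization in $\mathfrak M$ is $1/2$.
\begin{enumerate}
\item Since $\mathfrak M$ is compact, every model of its theory is isomorphic to $\mathfrak M$, so $p$ is realized in every model.
\item The only terms are variables. For any $\psi(\bar y)$ with $\inf\psi=0$ and any $r>0$, the set $\{\bar b:\psi(\bar b)<r\}$ is a nonempty open subset of $[0,1]^k$.
\item Its projection to each coordinate is a nonempty open set, so it contains points $y$ with $\varphi(y)\neq1/2$.
\end{enumerate}
Hence $p$ is not principal, yet it is realized in every model.

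Your suggested repair breaks for the same reason. A complete local type contains equality conditions $\varphi(\bar x;a)=s$, one for each parameter $a$ and hence typically infinitely many. No condition $\psi\le r$ with $r>0$ forces an equality for continuous formulas on a non-discrete space; choosing $r$ below the thresholds only handles strict inequalities.

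The converse holds only after principality is weakened to metric isolation, and then it is exactly the omitting types theorem. The forward direction is unaffected, and it is the only direction used later (in Lemma~\ref{lem:principal-Martin-types}).
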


\begin{proof}
	Let $p(x)$ be principal, witnessed by an
	$\mathcal L_{A}$-formula $\psi(\bar y)$, an
	$\mathcal L_{A}$-term $t(\bar y)$, and a rational $r>0$. The
	satisfiability of
	$\operatorname{Th}_{A}(\mathfrak M)\cup\{\psi(\bar y)=0\}$
	means that $\inf_{\bar y}\psi(\bar y)$ has value zero in the complete
	theory. If
	$\mathfrak N\models\operatorname{Th}_{A}(\mathfrak M)$, then
	$\inf_{\bar y}\psi^{\mathfrak N}(\bar y)=0$. Since $r>0$, there is
	$\bar b\in\mathfrak N$ such that $\psi^{\mathfrak N}(\bar b)\le r$.
	The witnessing property gives $t^{\mathfrak N}(\bar b)\models p$.
	
	Conversely, suppose that $p$ is realized in every model of
	$\operatorname{Th}_{A}(\mathfrak M)$. If $p$ is not principal with
	a positive witnessing radius, then the continuous omitting type
	theorem \cite[Theorem 4.9]{Eagle2014omiting} would yield a model
	$\mathfrak N\models\operatorname{Th}_{A}(\mathfrak M)$ omitting
	$p$. This contradicts the hypothesis that $p$ is realized in every
	such model. Therefore $p$ is principal and admits a positive
	witnessing radius.
\end{proof}

For $p\in S_\varphi(A)$, let
$\varphi^p(a)$ denote the unique value assigned to the instance $\varphi(y;a)$ by the completeness of $p$. The local logic topology $\tau_\varphi$ on $S_\varphi(A)$ is the
weakest topology for which, for every $a\in A$, the evaluation map
$e_a:p\mapsto\varphi^p(a)$
is continuous. A basis of $\tau_\varphi$-open sets is given by finite
intersections of sets of the form
$$
[p;a,\varepsilon]_\varphi = \left\{
q\in S_\varphi(A):
\left|\varphi^p(a) - \varphi^q(a)\right|<\varepsilon
\right\}, \quad\varepsilon>0.
$$
There is a natural continuous restriction map $r_\varphi:p\mapsto p\upharpoonright\varphi$ from $S_1(A)$ to $S_\varphi(A)$.
Also, $\varphi$ induces a definable pseudo-metric on $S_\varphi(A)$,
%$$   d_\varphi(p,q) =    \inf\left\{r\ge0:\operatorname{Th}_A(\mathfrak M)\cup p(y)\cup q(z)\cup \{d_\varphi(y,z)\le r\}\text{ is consistent}\right\}.$$
$$d_{\varphi}(p,q)=\sup_{a\in A}\left|\varphi^p(a)-\varphi^q(a)\right|.$$
The topology induced by $d_{\varphi}$ is denoted by
$\tau_d$, which is generally finer than $\tau_\varphi$.
%Since uniform convergence on $A$ implies pointwise convergence,
\begin{definition}[Isolated type]
	A (local) type $p \in S_1(A)$ (or $S_\varphi(A)$) is called logic-isolated if $\{p\}$ is open in the logic topology $\tau$ (or $\tau_\varphi$).
\end{definition}

\section{Martin boundary and Keisler measures}

\subsection{The expanded Martin language}

Let $\tau_G\subseteq\tau_0$ be the family of relatively compact regular domains that are
Greenian. We assume that $\tau_G\neq\varnothing$ and that it generates the same topology as $\tau_0$. Define the {Greenian weight} of $X$ by
$$
{w}_G(X)
=
\min\left\{
|\tau_G'|:
\tau_G'\subseteq\tau_G
\text{ is a basis for the topology of }X
\right\}.
$$

For $U\in\tau_G$, fix a base point $x_0\in U$ and
write $U^\times=U\setminus\{x_0\}$. If $G_U$ is the Green kernel of
$U$, the normalized Martin kernel is
$K_{U}(x,y)=G_U(x,y)/G_U(x_0,y)$ for $y\in U^\times$. Thus
$K_{U}(x_0,y)=1$, and for a fixed pole $y$ the function
$K_{U}(\cdot,y)$ is positive and harmonic away from $y$. Choose a continuous strictly increasing map
$\beta:[0,\infty]\to[0,1]$ with $\beta(\infty)=1$. The bounded local
Martin predicate is
$\varphi_{U}(y;x)=\beta(K_{U}(x,y))$. We assume that this predicate has a uniformly continuous extension across the diagonal of $U$. 

\begin{definition}[Language Mar]
	The language Mar is
	$$
	\mathcal L_{\mathrm{Mar}}
	=
	\mathcal L_{\mathrm{Bal}}
	\cup
	\{\varphi_U:U\in\tau_G\}.
	$$
	Consequently,
	$|\mathcal L_{\mathrm{Mar}}|=|\mathcal L_{\mathrm{Bal}}|+w_G(X)+\aleph_0$.
	In particular, if $\mathcal L_{\mathrm{Bal}}$ is countable and
	$w_G(X)=\aleph_0$, then
	$\mathcal L_{\mathrm{Mar}}$ is countable.
\end{definition}

For $y\in U^\times$, let $\operatorname{Prof}_U(y)$ be the function $x\mapsto\varphi_U(y;x)$. Equip the realized profile set with the compact-open uniformity generated by the pseudometrics $d_C(y,z)=\sup_{x\in C}|\varphi_U(y;x)-\varphi_U(z;x)|$, where $C\Subset U$ is compact. Profiles are identified when all these pseudometrics vanish. If $U$ has a countable compact exhaustion $(C_n)_{n<\omega}$, this uniformity is metrized by $d_M(y,z)=\sum_{n<\omega}2^{-n-1}d_{C_n}(y,z)$; countability is not assumed in the general construction.

\begin{lemma}[Compactness of the Martin profile closure]
	\label{lem:martin-profile-compactness}
	For $U\in\tau_G$, let
	$$
	\mathcal P_U
	=
	\{\operatorname{Prof}_U(y):y\in U^\times\}
	\subseteq C(U,[0,1]).
	$$
	Then $\mathcal P_U$ has compact closure in the compact-open topology.
\end{lemma}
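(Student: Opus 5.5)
The plan is to apply the Arzelà--Ascoli theorem in the compact-open topology on $C(U,[0,1])$. Since every profile takes values in $[0,1]$, pointwise boundedness is automatic. It therefore suffices to show that $\mathcal P_U$ is equicontinuous on each compact $C\Subset U$. Because $U$ is locally compact Hausdorff, Ascoli's theorem then gives relative compactness of $\mathcal P_U$ in $C(U,[0,1])$ with the compact-open topology, which coincides with the topology of uniform convergence on compacta. No countability of an exhaustion is needed for this version of Ascoli, which matches the remark preceding the lemma.

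For equicontinuity I would fix $x_1\in U$ and $\varepsilon>0$, and split the poles into two cases.

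\emph{Poles far from $x_1$.} Choose a connected regular neighbourhood $V\Subset U$ of $x_1$ with $\overline V$ compact. For poles $y\notin\overline{V'}$, where $V'\Supset V$ is slightly larger, the function $K_U(\cdot,y)$ is positive harmonic on $V'$. Since $\beta$ is continuous and strictly increasing, I would work with the unbounded quantity $K$ directly.
\begin{itemize}
\item If $K_U(x_1,y)$ is bounded by a large constant $L$, then Harnack's inequality on $V'$ (equivalent to monotone convergence, see the remark after that definition) bounds $K_U(\cdot,y)$ uniformly on $\overline V$. Harnack compactness then gives equicontinuity of this uniformly bounded family of positive harmonic functions on $\overline V$. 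This follows from the standard Brelot fact that a locally bounded family of harmonic functions is equicontinuous, which I would cite from \cite{CC1963axiomatic} in the same way as in the proof of Proposition~\ref{cor:riesz-decomposition}. Uniform continuity of $\beta$ on $[0,L']$ transfers this to the profiles.
\item If $K_U(x_1,y)>L$, then Harnack gives $K_U(x,y)\ge L/C_{\overline V,V'}$ on $\overline V$. Hence $\varphi_U(y;x)\in[\beta(L/C),1]$, an interval of length less than $\varepsilon$ once $L$ is large, since $\beta(t)\to1$. So the oscillation on $V$ is below $\varepsilon$ in this regime as well.
\end{itemize}

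\emph{Poles near $x_1$.} For poles $y\in\overline{V'}$, I would use the standing assumption that $\varphi_U(y;x)$ extends uniformly continuously across the diagonal. This gives joint uniform continuity of $(y,x)\mapsto\varphi_U(y;x)$ on the compact set $\overline{V'}\times\overline V$. Consequently the profiles with poles in $\overline{V'}$ are equicontinuous at $x_1$.

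Combining the two cases yields equicontinuity at every point, hence on compacta, and Ascoli concludes the proof. Passing to the quotient identifying profiles with vanishing pseudometrics does not affect compactness.

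The main obstacle is the far-pole case when $K$ is unbounded. Harnack gives only a multiplicative bound, so equicontinuity must be extracted through the saturation of $\beta$ near $1$ rather than by linear estimates. I expect some care is needed to make the threshold $L$ depend only on $\varepsilon$, $V$ and $V'$, and not on $y$. Along the way one also has to check that Harnack's inequality applies uniformly to the family $K_U(\cdot,y)$ on a fixed neighbourhood avoiding the poles.
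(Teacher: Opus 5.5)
Your proposal is correct and shares the paper's skeleton: Arzel\`a--Ascoli in the compact-open topology, with poles split into a near region and a far region. In both arguments the near region is handled by the assumed uniformly continuous extension of $\varphi_U$ across the diagonal, and the far region by Harnack.

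The difference is in how you control the far poles. The paper fixes a compact $C\Subset U$ and one relatively compact open $W$ with $C\cup\{x_0\}\subseteq W$. For $y\notin W$ the kernel $K_U(\cdot,y)$ is positive harmonic on $W$ with $K_U(x_0,y)=1$. Harnack on $W$ then bounds the far kernels on $C$ uniformly in $y$, and Harnack compactness gives equicontinuity immediately. Because the normalization at the base point is built into the choice of $W$, the unbounded regime you flag as the main obstacle never arises. So the paper needs no case split on the size of $K_U(x_1,y)$ and no appeal to $\beta(\infty)=1$.

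You work instead in small neighbourhoods $V\Subset V'$ of each point $x_1$, which need not contain $x_0$. You therefore cannot use the normalization and must absorb large kernels through the saturation of $\beta$. This works. Your threshold $L$, chosen so that $1-\beta(L/C_{\overline V,V'})<\varepsilon$, depends only on $\varepsilon$, $V$ and $V'$, which settles your worry about uniformity in $y$. The bounded case follows from Harnack compactness: normalize at $x_1$, then multiply by scalars in $(0,L]$.

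What your route buys is independence from the normalization $K_U(x_0,y)=1$. It would apply verbatim to any family of positive harmonic kernels composed with a saturating $\beta$. The cost is the extra case analysis. In both proofs the Harnack domain ($V'$ for you, $W$ in the paper) has to be chosen connected. This is possible since $U$ is a connected, locally connected domain.
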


\begin{proof}
	Fix a compact set $C\Subset U$ and choose a relatively compact open
	set $W$ such that $C\cup\{x_0\}\subseteq W\Subset U$. For poles $y\in\overline W$, the assumed continuous extension of
	$\varphi_U$ across the diagonal is uniformly continuous on the compact
	set $C\times\overline W$. Hence $\{\operatorname{Prof}_U(y)|_C:y\in\overline W\}$ is equicontinuous.
	
	For $y\notin W$, the function $K_U(\,\cdot\,,y)$ is positive harmonic
	on $W$ and satisfies $K_U(x_0,y)=1$. Harnack compactness therefore
	implies that
	$\{\operatorname{Prof}_U(y)|_C:y\notin W\}$
	is equicontinuous as well. Thus the whole family
	$\mathcal P_U|_C$ is equicontinuous and takes values in the compact
	interval $[0,1]$.
	
	By the Arzel\`a--Ascoli theorem, its closure in $C(C,[0,1])$ is compact.
	Since this holds for every compact $C\Subset U$, the Arzel\`a--Ascoli
	theorem for the compact-open topology gives that
	$\overline{\mathcal P_U}$ is compact in $C(U,[0,1])$.
\end{proof}

\begin{definition}[Local Martin compactification]
	The local Martin compactification $\widehat U^{M}$ is the closure 
	of $\{\operatorname{Prof}_{U}(y):y\in U^\times\}$ under the compact-open topology of $C(U,[0,1])$. The Martin boundary is $\partial_MU=\widehat U^M\setminus U$.
\end{definition}

We assume that the interior map extends over $x_0$, embeds $U$ densely, and separates boundary profiles.

\begin{lemma}[Harmonic extension of boundary profiles]
	\label{prop:Martin-boundary-harmonic-profile}
	For every $\xi\in\partial_MU$, there is a unique normalized positive
	harmonic function $k_\xi\in H^+(U)$ such that $k_\xi(x_0)=1$ and
	$K_{U}(\cdot,y_i)\to k_\xi$ locally uniformly on $U$ whenever
	$y_i\to\xi$. Equivalently,
	$\operatorname{Prof}_{U}(\xi)(x)=\beta(k_\xi(x))$ for every
	$x\in U$.
\end{lemma}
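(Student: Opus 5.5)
The plan is to read the boundary point $\xi$ as a limit of profiles and to turn the limit profile into a harmonic function by inverting $\beta$. By definition $\xi\in\partial_MU$ is a compact-open limit of profiles $\operatorname{Prof}_U(y_i)$ with $y_i\in U^\times$. Since $\xi\notin U$ and the interior map embeds $U$, the poles $y_i$ eventually leave every compact subset of $U$. Fix a relatively compact open $W$ with $x_0\in W\Subset U$. For $y_i\notin\overline W$, the function $K_U(\cdot,y_i)$ is positive harmonic on $W$ and equals $1$ at $x_0$. More generally, for any compact $C\Subset U$ containing $x_0$, it is eventually positive harmonic on a connected neighbourhood of $C$ (after enlarging $C$ within the connected domain $U$). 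The Harnack inequality of the Remark following the monotone-convergence definition then bounds $K_U(\cdot,y_i)$ on $C$ between $C_{C,U}^{-1}$ and $C_{C,U}$. This is the key point: the values stay inside a compact subinterval of $(0,\infty)$, where $\beta^{-1}$ is uniformly continuous.

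Next I define $k_\xi=\beta^{-1}\circ\operatorname{Prof}_U(\xi)$. Compact-open convergence of $\beta(K_U(\cdot,y_i))$ to $\operatorname{Prof}_U(\xi)$ on $C$ is combined with uniform continuity of $\beta^{-1}$ on $\beta([C_{C,U}^{-1},C_{C,U}])$. This gives $K_U(\cdot,y_i)\to k_\xi$ uniformly on $C$, hence locally uniformly on $U$, and it shows that $k_\xi$ is finite and positive, with $k_\xi(x_0)=1$ because $K_U(x_0,y_i)=1$. Harmonicity of $k_\xi$ is local. On each regular $V\Subset U$ whose closure eventually avoids the poles, the functions $K_U(\cdot,y_i)|_V$ are continuous up to $\overline V$ and equal $D_V$ of their traces. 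By Theorem~\ref{thm: Dirichlet solution and trace}, $D_V$ is an isometry in the sup norm, so the uniform limit on $\overline V$ is $D_V(k_\xi|_{\partial V})$, which is harmonic. The sheaf gluing axiom then gives $k_\xi\in H^+(U)$. Alternatively, Harnack compactness for directed families \cite[Lemma~2]{CC1963axiomatic} closes $H^+$ under locally uniform limits.

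For uniqueness and independence of the net, suppose $y_i\to\xi$ and $y'_j\to\xi$ are two nets. Their profiles converge to the same point $\xi$ of $C(U,[0,1])$, so the limits $\beta^{-1}\circ\operatorname{Prof}_U(\xi)$ coincide, since $\beta$ is injective. Any normalized harmonic $k$ satisfying the convergence property must equal this pointwise limit, which gives uniqueness. The equivalent formulation $\operatorname{Prof}_U(\xi)=\beta\circ k_\xi$ is immediate from the construction.

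The main obstacle is the first step. One has to guarantee that the poles eventually exit every compact set, so that Harnack applies, and that $\beta^{-1}$ does not amplify errors near $0$ or $\infty$. I will handle this using the standing assumption that the interior map separates boundary profiles from interior ones, together with the two-sided Harnack bound obtained by chaining along a compact connected set containing $x_0$.
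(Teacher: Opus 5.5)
Your proof is correct, but it takes a different route from the paper.

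\textbf{The paper's route.} The paper applies the Harnack compactness theorem to the net $K_U(\cdot,y_i)$ and extracts a locally uniformly convergent subnet. Its limit is automatically positive harmonic with value $1$ at $x_0$. The paper then notes that the bounded profiles converge to $\operatorname{Prof}_U(\xi)$, so every convergent subnet has the same limit. This gives both uniqueness and, implicitly via the subnet-of-a-subnet principle, convergence of the whole net.

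\textbf{Your route.} You define $k_\xi=\beta^{-1}\circ\operatorname{Prof}_U(\xi)$ directly and avoid subnets altogether. The two-sided Harnack bound keeps $K_U(\cdot,y_i)$ inside a compact subinterval of $(0,\infty)$ on each compact set, so $\beta^{-1}$ is uniformly continuous there. Hence convergence of $\beta(K_U(\cdot,y_i))$ transfers directly to convergence of $K_U(\cdot,y_i)$. You then prove harmonicity of the limit separately, using the sup-norm isometry of $D_V$ on regular $V$ together with sheaf gluing.

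\textbf{What each approach buys.} Your argument needs only the Harnack inequality rather than the full compactness theorem. It makes explicit why the limit is finite and positive and why $\beta^{-1}$ cannot amplify errors near $0$ or $\infty$. It also gives the ``equivalently'' clause by construction. The paper's argument is shorter because compactness delivers harmonicity of the limit at once. However, it leaves implicit the role of the injectivity of $\beta$ and the upgrade from subnet convergence to convergence of the full net, both of which you make explicit.

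\textbf{Two small points.}
\begin{itemize}
\item The Harnack constant should be taken relative to the connected relatively compact neighbourhood $W\supseteq C$ on which $K_U(\cdot,y_i)$ is eventually harmonic, i.e.\ $C_{C,W}$ rather than $C_{C,U}$. This is because $K_U(\cdot,y_i)$ is not harmonic on all of $U$.
\item The step that the poles eventually leave every compact set is asserted without proof in the paper too. The precise argument is as follows. If infinitely many poles stayed in a compact $C$, a subnet would converge to some $x\in U$. Continuity of $y\mapsto\operatorname{Prof}_U(y)$, coming from the uniformly continuous extension across the diagonal, together with Hausdorffness of the compact-open topology, would then force $\xi=\operatorname{Prof}_U(x)$. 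This contradicts $\xi\in\widehat U^M\setminus U$. So the ingredient actually used is continuity of the profile map plus the definition of $\partial_MU$ as the complement of the image, not the separation assumption you cite.
\end{itemize}
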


\begin{proof}
	Let $(y_i)$ converge to $\xi\in\partial_MU$. For each compact
	$C\Subset U$, the poles eventually lie outside $C$, so
	$K_{U}(\cdot,y_i)$ is positive and harmonic on a neighborhood of
	$C$. Its value at $x_0$ is always $1$. The Harnack inequalities and
	the Harnack compactness theorem therefore give a locally uniformly
	convergent subnet whose limit is a positive harmonic function
	$k_\xi$ normalized at $x_0$ \cite[Theorem~1]{LW1965axiomatic}. The bounded profiles converge to
	the profile prescribed by $\xi$, so all convergent subnets have the
	same limit. This proves both convergence and uniqueness.
\end{proof}

\subsection{Martin boundary and non-principal types}
We work in the local expansion $\mathcal L_{\mathrm{Mar}}(U,x_0)=\mathcal L_{\mathrm{Har}}\cup\{x_0,\varphi_U\}$, so cardinal arithmetic gives $|\mathcal L_{\mathrm{Mar}}(U,x_0)|=\max\{|\mathcal L_{\mathrm{Har}}|,\aleph_0\}$. For a topological space $Y$, write $\operatorname{dens}(Y)=\min\{|D|:D\subseteq Y\text{ is dense in }Y\}$. Fix a dense set $A\subseteq U$ with $|A|=\operatorname{dens}(U)$ and write $Ax_0=A\cup\{x_0\}$. Continuity implies that every compact-open profile is determined by its values on $A$.

For $y\in U^\times$, put $p_y=\operatorname{tp}_\varphi(y/Ax_0)$ and $B_\varphi(Ax_0)=\{p_y:y\in U^\times\}$. Let $C_\varphi(Ax_0)$ be its closure in the local logic topology inherited from $[0,1]^{Ax_0}$. Its elements are the Martin types over $Ax_0$. We use the $Ax_0$-definitional expansion and assume that the bounded Martin profiles separate points of $U$ and have the compact-uniform continuity required above. The local language with parameters has cardinality
$$|\mathcal L_{\mathrm{Mar}}(U,x_0;A)|=\max\{|\mathcal L_{\mathrm{Har}}|,\operatorname{dens}(U),\aleph_0\}.$$

\begin{lemma}[Cauchy profiles]
	\label{lem:Martin-Cauchy-profile-type}
	Every compact-open Cauchy net of realized profiles determines a consistent complete local type over $Ax_0$. The resulting map from the profile closure to $C_\varphi(Ax_0)$ is injective and is a homeomorphism onto its image. If a countable compact exhaustion is fixed, this map preserves the profile metric.
\end{lemma}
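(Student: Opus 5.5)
The plan is to define the map by pointwise limits on $Ax_0$ and then check its properties one at a time.

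\emph{Step 1: the map.} Let $(y_i)$ be a net in $U^\times$ whose profiles form a compact-open Cauchy net. For every compact $C\Subset U$, the functions $\operatorname{Prof}_U(y_i)|_C$ are Cauchy in the supremum norm. Since they take values in $[0,1]$, they converge uniformly to some $F\in C(U,[0,1])$. In particular, for each $a\in Ax_0$ the net $\varphi_U(y_i;a)$ converges to $F(a)$. Every singleton $\{a\}$ is compact, so this uses only $d_{\{a\}}$.

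Define $q_F$ as the collection of conditions $\varphi(y;a)=F(a)$ for $a\in Ax_0$, together with the strict inequalities they entail for rational $r$. I would show that $q_F$ is the limit in $[0,1]^{Ax_0}$ of the types $p_{y_i}$, so $q_F\in C_\varphi(Ax_0)$. Consistency follows by compactness. A finite set of conditions $|\varphi(y;a_k)-F(a_k)|\le\varepsilon$ is realized by $y_i$ for all large $i$, and hence is satisfied in $\mathfrak M$. Completeness is automatic, because a local $\varphi$-type over $Ax_0$ is determined by the value $\varphi^q(a)$ for each $a$. The type depends only on $F$, not on the chosen net, so we obtain a well-defined map $\Theta:\overline{\mathcal P_U}\to C_\varphi(Ax_0)$, $\Theta(F)=q_F$. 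Every element of the closure arises from such a net, either as the limit of realized profiles or via Lemma~\ref{lem:martin-profile-compactness}.

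\emph{Step 2: injectivity.} Suppose $\Theta(F)=\Theta(G)$. Then $F=G$ on the dense set $A$. Both functions are continuous on $U$, so $F=G$. This is exactly the remark that every compact-open profile is determined by its values on $A$.

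\emph{Step 3: homeomorphism onto the image.} I would show $\Theta$ is continuous and then conclude with compactness. Each coordinate $F\mapsto F(a)$ is continuous in the compact-open topology, and the local logic topology is the product topology inherited from $[0,1]^{Ax_0}$, so $\Theta$ is continuous. By Lemma~\ref{lem:martin-profile-compactness}, $\overline{\mathcal P_U}$ is compact. The target is Hausdorff, being a subspace of $[0,1]^{Ax_0}$. A continuous injection from a compact space into a Hausdorff space is a homeomorphism onto its image.

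This last step is where I expect the main subtlety. Convergence of profiles on $Ax_0$ is only pointwise convergence, which is weaker than compact-open convergence in general. The argument does not try to upgrade it directly. Instead it relies on compactness of the domain, which in turn comes from the Harnack equicontinuity already established in Lemma~\ref{lem:martin-profile-compactness}.

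\emph{Step 4: metric statement.} Fix a countable compact exhaustion $(C_n)$. Metric preservation should be read for the pseudometric on types induced by the same weighted sum, namely
\[
\sum_n 2^{-n-1}\sup_{a\in A\cap C_n}|\varphi^p(a)-\varphi^q(a)|.
\]
By density of $A$ and continuity of profiles, the supremum over $A\cap C_n$ equals the supremum over $C_n$ whenever $C_n$ is the closure of its interior. Choose the exhaustion so that this holds. Then the two metrics agree term by term, and so $d_M(F,G)=d(\Theta F,\Theta G)$.
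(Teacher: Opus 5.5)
Your proposal is correct and follows essentially the same route as the paper. You take coordinatewise limits on $Ax_0$ with consistency by compactness, get injectivity from density of $A$ and continuity of profiles, obtain the homeomorphism from a continuous injection of the compact closure into a Hausdorff space, and prove the metric clause by equating suprema over $A\cap C_n$ and $C_n$; your remark that this last equality needs $A\cap C_n$ dense in $C_n$ (e.g.\ $C_n$ the closure of its interior) makes explicit a hypothesis that the paper's one-line density argument leaves tacit.
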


\begin{proof}
	Let $(y_i)$ be a compact-open Cauchy net. For every $a\in Ax_0$, the values $\varphi_U(y_i;a)$ converge; denote the limit by $r_a$. Any finite collection of conditions $|\varphi(y;a)-r_a|\le\varepsilon$ is approximately realized by some tail element $y_i$. Continuous compactness therefore gives a complete local type with coordinates $(r_a)_{a\in Ax_0}$. Density of $A$ and continuity of profiles show that two compact-open limits with the same coordinates agree on every compact subset of $U$, proving injectivity. Coordinate convergence is exactly the local logic topology, while uniform convergence on compact sets is the profile topology. On the compact closure, the continuous bijection between these Hausdorff spaces is a homeomorphism. In the metrizable case, density gives equality of the suprema over $A\cap C_n$ and $C_n$, hence equality of the profile metrics.
\end{proof}

\begin{lemma}[The reference type]
	\label{lem:reference-Martin-type}
	The reference type $p_{x_0}$ belongs to $C_\varphi(Ax_0)$ and is
	principal but non-isolated.
\end{lemma}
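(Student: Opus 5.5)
We have to prove three things about the reference type $p_{x_0}=\operatorname{tp}_\varphi(x_0/Ax_0)$: it lies in $C_\varphi(Ax_0)$, it is principal, and it is not logic-isolated. We use the standing assumptions that the profile map extends over $x_0$, embeds $U$ densely into $\widehat U^M$, and separates points. We also use the assumed uniformly continuous extension of $\varphi_U$ across the diagonal.

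\emph{Membership.} Since $x_0$ is not isolated in $U$ (a domain in a locally connected space with a nontrivial Green kernel), we pick a net $y_i\in U^\times$ with $y_i\to x_0$. By uniform continuity of the extended predicate on $C\times\overline W$, as in the proof of Lemma~\ref{lem:martin-profile-compactness}, $\operatorname{Prof}_U(y_i)\to\operatorname{Prof}_U(x_0)$ uniformly on compact sets. Lemma~\ref{lem:Martin-Cauchy-profile-type} turns this compact-open Cauchy net into a complete local type. By continuity its coordinates are $\varphi_U(x_0;a)$, $a\in Ax_0$, so this type is $p_{x_0}$. Hence $p_{x_0}=\lim p_{y_i}$ lies in the closure $C_\varphi(Ax_0)$ of $B_\varphi(Ax_0)$.

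\emph{Principality.} Since $x_0$ is named in $\mathcal L_{\mathrm{Mar}}(U,x_0)$, we take the term $t(y)=x_0$ (a constant term, so $\bar y$ may be a dummy variable) and the formula $\psi(y)=d_U(y,x_0)$. Then $\psi(x_0)=0$, so the first witnessing condition holds. The second condition must hold for some positive radius $r$. A constant term gives this for every $r$, because $t^{\mathfrak N}(\bar b)=x_0^{\mathfrak N}$, which realizes $\operatorname{tp}(x_0/Ax_0)$ and hence its $\varphi$-restriction. The key point is that principality only asks that the terms realize $p$, not that every solution of $\psi\le r$ does. Lemma~\ref{lem:realization-principal-types} is then consistent with this, since $p_{x_0}$ is realized by the named constant in every model.

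\emph{Non-isolation.} We must show that every basic $\tau_\varphi$-neighbourhood $\bigcap_{k\le N}[p_{x_0};a_k,\varepsilon]_\varphi$ contains a type different from $p_{x_0}$. The membership step gives types $p_{y_i}$ with $y_i\ne x_0$ converging to $p_{x_0}$ in the local logic topology. By point separation of profiles, $p_{y_i}\neq p_{x_0}$: the profiles of $y_i$ and $x_0$ differ, and by density of $A$ and continuity they already differ at some $a\in Ax_0$. So every neighbourhood of $p_{x_0}$ meets $B_\varphi(Ax_0)\setminus\{p_{x_0}\}$, and $\{p_{x_0}\}$ is not open.

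\emph{Main obstacle.} The delicate step is the interplay between principality and non-isolation. Lemma~\ref{lem:realization-principal-types} links principality to realization in all models, not to isolation, so no contradiction arises. One must nonetheless make sure that the witnessing radius condition is not read as ``every $y$ with $d_U(y,x_0)\le r$ realizes $p$'', which would fail. Choosing the constant term $t\equiv x_0$ is what makes this work.

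A second subtlety is that $\varphi_U(y;x_0)=\beta(1)$ for all $y$, so the coordinate at $x_0$ itself does not separate. Separation must come from coordinates in $A$, which is exactly what the separation and density hypotheses provide.
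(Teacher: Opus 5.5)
Your proof is correct and follows essentially the same route as the paper. A net in $U^\times$ converging to $x_0$, combined with Lemma~\ref{lem:Martin-Cauchy-profile-type}, gives membership in $C_\varphi(Ax_0)$, and point separation of profiles then gives non-isolation; principality is witnessed by the constant term $c_{x_0}$. Your only deviation is using $\psi(y)=d_U(y,x_0)$ where the paper uses the identically zero formula, which is immaterial because the witnessing term is constant.
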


\begin{proof}
	Because $x_0$ is named in the parameter set $Ax_0$, the constant term
	$c_{x_0}$ is available. Taking the identically zero formula, the
	constant term $c_{x_0}$, and any positive rational radius witnesses
	that $p_{x_0}$ is principal.
	
	Choose a net of distinct points $x_i\in U^\times$ converging to $x_0$. Compact-uniform continuity makes their profiles converge to the profile of $x_0$, so Lemma~\ref{lem:Martin-Cauchy-profile-type} gives $p_{x_0}\in C_\varphi(Ax_0)$.
	
	The same convergence is coordinatewise on every parameter in $Ax_0$,
	so it implies convergence in the local logic topology. Since the
	profiles separate points, $p_{x_i}\ne p_{x_0}$. Every logic
	neighborhood of $p_{x_0}$ therefore contains another Martin type, and
	$p_{x_0}$ is non-isolated.
\end{proof}

\begin{lemma}[Principal Martin types]
	\label{lem:principal-Martin-types}
	The principal Martin types in $C_\varphi(Ax_0)$ are precisely
	$\{p_x:x\in U\}=B_\varphi(Ax_0)\cup\{p_{x_0}\}$.
\end{lemma}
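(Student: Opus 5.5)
The plan is to prove the two inclusions separately, after first checking that the set $\{p_x:x\in U\}$ equals $B_\varphi(Ax_0)\cup\{p_{x_0}\}$. That identity is immediate, since $U=U^\times\cup\{x_0\}$. Lemma~\ref{lem:reference-Martin-type} already places $p_{x_0}$ in $C_\varphi(Ax_0)$, and each $p_y$ with $y\in U^\times$ lies in $B_\varphi(Ax_0)\subseteq C_\varphi(Ax_0)$ by definition.

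\emph{Realized types are principal.} For $x\in U$, the plan is to copy the argument for $p_{x_0}$ in Lemma~\ref{lem:reference-Martin-type}. Each $a\in Ax_0$ is named by a constant $c_a$, but $x$ itself need not lie in $Ax_0$. I would therefore work in the $Ax_0$-definitional expansion and use the realization of $p_x$ directly. Take the witnessing formula $\psi(y)=\sup_{a}\,|\varphi_U(y;a)-\varphi_U(x;a)|$, understood as the definable predicate $d_\varphi(\operatorname{tp}(y),p_x)$, together with the term $t(y)=y$. This formula vanishes at $y=x$, so the first clause of principality holds. For the second clause, the sublevel condition has to force realization of $p_x$. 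Taking $r$ arbitrarily small is not permitted, so the natural choice is the zero set: because profiles separate points and the profile map is injective (Lemma~\ref{lem:Martin-Cauchy-profile-type}), the condition $\psi=0$ pins down exactly the coordinates of $p_x$. If a positive radius is strictly required, I would instead compose with a cut-off connective $\psi'=\min(1,\psi/r)\dotminus(1-\cdot)$, rescaled so that $\psi'\le r$ is equivalent to $\psi=0$. When $x\in A$, the constant term $c_x$ gives a cleaner witness, exactly as for $x_0$.

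\emph{Principal types are realized in $U$.} Let $p\in C_\varphi(Ax_0)$ be principal. By Lemma~\ref{lem:realization-principal-types}, $p$ is realized in every model of $\operatorname{Th}_{Ax_0}(\mathfrak M)$, and in particular in the standard structure $\mathfrak M$ itself. So some point $b$ of the point sort has $\operatorname{tp}_\varphi(b/Ax_0)=p$. That point lies in $X_U$, which is the domain of the Martin predicate. Its profile $a\mapsto\varphi_U(b;a)$ therefore coincides on $Ax_0$ with the profile of $p$. By density of $A$ and continuity, it coincides on all of $U$. Injectivity in Lemma~\ref{lem:Martin-Cauchy-profile-type} then gives $p=p_b$ with $b\in U$. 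Consequently no point of $\partial_MU$ carries a principal type.

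\emph{Main obstacle.} The delicate step is the first direction for $x\notin Ax_0$. A positive-radius witness requires the sublevel set $\{\psi\le r\}$ to isolate exactly one type. Yet Lemma~\ref{lem:reference-Martin-type} shows that Martin types accumulate at every realized type, so a genuinely open neighborhood cannot work. The intended fix is to encode the zero set by a connective that is flat on $[0,r]$ in the rescaled sense above, or simply to adjoin $x$ as a parameter through the definitional expansion. A secondary point is the use of Lemma~\ref{lem:realization-principal-types} inside $\mathfrak M$: I would make sure the realizing element lies in the sort $X_U$ rather than in an elementary extension, which holds because $\mathfrak M$ is itself a model of $\operatorname{Th}_{Ax_0}(\mathfrak M)$.
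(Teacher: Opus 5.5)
\textbf{Gap in the first inclusion.} Your second inclusion matches the paper. Lemma~\ref{lem:realization-principal-types} realizes a principal $p$ in $\mathfrak M$ itself, so $p=p_b$ for some $b\in U$. The first inclusion, however, does not go through as you propose.

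With the term $t(y)=y$, no positive-radius witness exists, for the reason you yourself give: realized Martin types accumulate at $p_x$. Your cut-off connective cannot repair this. Suppose a continuous $f$ satisfies $\{s\ge0:f(s)\le r\}=\{0\}$ on the range of $\psi$ (the range contains arbitrarily small positive values, from points $y\to x$), and suppose $r>0$. Then $f(0)\le r$. If $f(0)<r$, continuity gives $f(s)<r$ for small $s>0$, a contradiction. If $f(0)=r>0$, then $f(\psi)=0$ is never satisfied, so the first clause of principality fails. Separately, $\psi=\sup_{a\in Ax_0}|\varphi_U(y;a)-\varphi_U(x;a)|$ is an infinitary supremum over parameters, not a formula. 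Rewriting it as a quantifier over the point sort would require $x$ as a parameter, which is what you were trying to avoid.

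\textbf{The missing idea} is the one the paper uses: every $x\in U$ already lies in $\operatorname{dcl}(Ax_0)$. Choose $a_m\in A$ with $a_m\to x$. Since $|d_U(y,a_m)-d_U(y,x)|\le d_U(a_m,x)$, the predicate $d_U(\cdot,x)$ is a uniform limit of the $A$-formulas $d_U(\cdot,a_m)$. Hence the $Ax_0$-definitional expansion contains a constant term $c_x$. The identically zero formula, the constant term $c_x$, and any positive rational radius then witness principality trivially: a constant term sends every $\bar b$ to $x$, however the types accumulate.

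Your remark about adjoining $x$ ``through the definitional expansion'' points this way. Without the $\operatorname{dcl}$ argument, though, it would simply enlarge the parameter set beyond $Ax_0$ and change the type space. Note also that you were implicitly trying to prove something closer to isolation. Principality in the paper's sense, which allows terms, is strictly weaker. That is why Lemma~\ref{lem:reference-Martin-type} can show $p_{x_0}$ is principal yet non-isolated.
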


\begin{proof}
	Fix $x\in U$. Since $A$ is dense, choose $a_m\in A$ with $a_m\to x$.
	The inequality
	$|d_U(y,a_m)-d_U(y,x)|\le d_U(a_m,x)$ shows uniform convergence to
	$d_U(y,x)$. Hence $x\in\operatorname{dcl}(Ax_0)$, and the
	$Ax_0$-definitional expansion contains a constant term $c_x$. The
	identically zero formula, $c_x$, and any positive rational radius
	witness that $p_x$ is principal.
	
	Conversely, let $p\in C_\varphi(Ax_0)$ be principal. By
	Lemma~\ref{lem:realization-principal-types}, $p$ is realized in the
	point sort $U$ of $\mathfrak M$. Thus
	$p=\operatorname{tp}_\varphi(x/Ax_0)=p_x$ for some $x\in U$. Finally,
	$U=U^\times\sqcup\{x_0\}$ and the displayed equality follows from the
	definition of $B_\varphi(Ax_0)$.
\end{proof}

\begin{theorem}[First isomorphism theorem]
	\label{thm:global-Martin-profile-correspondence}
	The map $x\mapsto p_x$ on $U^\times$ extends uniquely to a
	homeomorphism
	$\Gamma:\widehat U^{M}\to C_\varphi(Ax_0)$. Under this
	correspondence, $p_{x_0}$ is the unique principal type not realized in $U^\times$, while $\partial_MU$
	corresponds exactly to the non-principal Martin types over $Ax_0$.
	When $U$ has a countable compact exhaustion and the metric above is used, $\Gamma$ is an isometry.
\end{theorem}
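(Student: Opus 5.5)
We assemble $\Gamma$ from Lemma~\ref{lem:Martin-Cauchy-profile-type}. Every point of $\widehat U^M$ is a compact-open limit of realized profiles $\operatorname{Prof}_U(y_i)$ with $y_i\in U^\times$. A convergent net is Cauchy, so the lemma attaches to it a complete local type over $Ax_0$ whose coordinates are $r_a=\lim_i\varphi_U(y_i;a)$. We define $\Gamma(\xi)$ to be this type.

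\textbf{Well-definedness and the extension.} The coordinates depend only on the limit profile evaluated on $Ax_0$, so $\Gamma(\xi)$ does not depend on the chosen net. For a realized pole $y$, the constant net gives $\Gamma(\operatorname{Prof}_U(y))=p_y$, so $\Gamma$ extends $x\mapsto p_x$. The lemma also says $\Gamma$ is injective and a homeomorphism onto its image.

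\textbf{Surjectivity.} The image of $\Gamma$ is compact, by Lemma~\ref{lem:martin-profile-compactness} and continuity of $\Gamma$. It is therefore closed in the Hausdorff space $[0,1]^{Ax_0}$. It contains $B_\varphi(Ax_0)$, hence it contains the closure $C_\varphi(Ax_0)$. Conversely, the image lies in the closure of $B_\varphi(Ax_0)$, since every $\xi$ is a limit of realized profiles and $\Gamma$ is continuous. So $\Gamma$ is a homeomorphism onto $C_\varphi(Ax_0)$.

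\textbf{Uniqueness.} $U^\times$ is dense in $\widehat U^M$ and the target is Hausdorff, so any continuous extension of $x\mapsto p_x$ is unique.

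\textbf{Isometry.} When a countable compact exhaustion is fixed, the metric statement is the last clause of Lemma~\ref{lem:Martin-Cauchy-profile-type}.

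\textbf{Matching principal and non-principal types.} Lemma~\ref{lem:principal-Martin-types} says the principal Martin types are exactly $\{p_x:x\in U\}$. By the standing assumption that the interior map extends over $x_0$ and embeds $U$, these are the images under $\Gamma$ of the interior points. Because the profiles separate points, $p_{x_0}\neq p_y$ for every $y\in U^\times$. Hence $p_{x_0}$ is the unique principal type not realized in $U^\times$, and Lemma~\ref{lem:reference-Martin-type} confirms that $p_{x_0}\in C_\varphi(Ax_0)$.

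Since $\Gamma$ is a bijection, the complement $\partial_MU=\widehat U^M\setminus U$ maps exactly onto $C_\varphi(Ax_0)\setminus\{p_x:x\in U\}$. That complement is the set of non-principal Martin types.

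\textbf{Expected main obstacle.} The delicate step is the converse direction of Lemma~\ref{lem:principal-Martin-types}: showing that no boundary point $\xi$ yields a principal type. That argument uses Lemma~\ref{lem:realization-principal-types}, which needs a countable language. In the uncountable case I would instead argue directly from the separation assumption. Boundary profiles are separated from all interior profiles, so a type realized in $\mathfrak M$ by some $x\in U$ equals $p_x$ and cannot equal $\Gamma(\xi)$ for $\xi\in\partial_MU$.

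I would also check that realization in $\mathfrak M$ itself is enough here, rather than realization in every model. This holds because principality with a positive radius gives realization in $\mathfrak M$ directly, by the first half of Lemma~\ref{lem:realization-principal-types}.
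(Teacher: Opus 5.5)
Your proposal is correct and follows the paper's own route: you embed the profile closure into $C_\varphi(Ax_0)$ via Lemma~\ref{lem:Martin-Cauchy-profile-type}, get surjectivity because the compact image is closed and contains the dense set $B_\varphi(Ax_0)$, identify the image of $U$ with the principal locus by Lemmas~\ref{lem:reference-Martin-type} and~\ref{lem:principal-Martin-types}, and take the isometry from the metric clause, with your explicit uniqueness and separation arguments only filling in steps the paper leaves implicit. The countability concern in your last paragraph is unfounded, as you yourself conclude: only the converse half of Lemma~\ref{lem:realization-principal-types} uses $|\mathcal L_A|=\aleph_0$, while the direction you need (principal implies realized in $\mathfrak M$) holds unconditionally.
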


\begin{proof}
	Lemma~\ref{lem:Martin-Cauchy-profile-type} extends the interior map to a topological embedding of the compact profile closure into $C_\varphi(Ax_0)$. Its image is closed and contains the dense set $B_\varphi(Ax_0)$, so it is all of $C_\varphi(Ax_0)$. Lemmas~\ref{lem:reference-Martin-type} and \ref{lem:principal-Martin-types} identify the image of $U$ with the principal locus. Its complement is therefore the non-principal locus. The last statement is the metric clause of Lemma~\ref{lem:Martin-Cauchy-profile-type}.
\end{proof}

For $\xi\in\widehat U^{M}$, we write $p_\xi$ for the local type over $Ax_0$ determined by $\varphi^{p_\xi}(a)=\operatorname{Prof}_{U}(\xi)(a)$ for $a\in A$, together with its value at the named base point.

\begin{corollary}[Realization in an elementary saturation]
	\label{thm:saturated-Martin-realization}
	Let $\kappa$ be a regular cardinal such that
	$\kappa>\max\{|\mathcal L_{\mathrm{Har}}|,\operatorname{dens}(U),\aleph_0\}$.
	Then every boundary type $p_\xi\in S_\varphi(Ax_0)$, with
	$\xi\in\partial_MU$, is realized in any sufficiently large
	$\kappa$-saturated and strongly $\kappa$-homogeneous elementary
	extension $\mathfrak C\succeq\mathfrak M$.
\end{corollary}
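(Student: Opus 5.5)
The plan is the standard saturation argument, with two preliminary checks: that $p_\xi$ is a consistent partial type over a small parameter set, and that $\mathfrak C$ is saturated enough for that parameter set.

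\textbf{Step 1: $p_\xi$ is consistent.} Fix $\xi\in\partial_MU$. By Theorem~\ref{thm:global-Martin-profile-correspondence}, $p_\xi=\Gamma(\xi)$ lies in the closure $C_\varphi(Ax_0)$ of the realized types $B_\varphi(Ax_0)$. The proof of Lemma~\ref{lem:Martin-Cauchy-profile-type} shows more. Choose a net $y_i\in U^\times$ whose profiles converge to $\xi$. Then every finite set of conditions
$$|\varphi_U(y;a)-\operatorname{Prof}_U(\xi)(a)|\le\varepsilon,\qquad a\in Ax_0,$$
is realized in $\mathfrak M$ by a tail element $y_i$. Hence $p_\xi$, viewed as a partial type in one point-sort variable over $Ax_0$, is finitely satisfiable in $\mathfrak M$. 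It is therefore consistent with $\operatorname{Th}_{Ax_0}(\mathfrak M)$. The same holds for any completion of $p_\xi$ to a full type in $S_1(Ax_0)$. Such a completion exists by compactness, since the restriction map $r_\varphi$ is surjective onto consistent local types.

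\textbf{Step 2: counting parameters.} The parameter set has
$$|Ax_0|\le\operatorname{dens}(U)+1<\kappa.$$
The language $\mathcal L_{\mathrm{Mar}}(U,x_0)$ has cardinality $\max\{|\mathcal L_{\mathrm{Har}}|,\aleph_0\}<\kappa$. The monster $\mathfrak C\succeq\mathfrak M$ contains $Ax_0$, and by elementarity the type remains consistent with $\operatorname{Th}_{Ax_0}(\mathfrak C)$. By $\kappa$-saturation (existence of $\kappa$-saturated, strongly $\kappa$-homogeneous extensions is standard in continuous logic, including the unbounded setting of \cite{BenYaacov2008Unbounded}), every type over a parameter set of size $<\kappa$ is realized in $\mathfrak C$. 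So some $b\in X^{\mathfrak C}$, or in the domain sort of $U$, realizes $p_\xi$.

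\textbf{Step 3: the sort and the gauge (main obstacle).} In unbounded continuous logic, saturation only applies to types that are bounded in gauge. Otherwise the type might describe an element at infinite gauge, which lies in no sort. Two routes handle this.

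\emph{First route.} The boundary poles $y_i$ lie in $U^\times\subseteq X_U$, and $X_U$ is relatively compact. So their images $i_U(y_i)$ have point-sort gauge $\nu_X$ bounded by the supremum over $\overline U$. I would add the condition $\nu_X(i_U(y))\le R$ to the type; it is consistent by Step 1, and the type becomes a type in a bounded piece.

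\emph{Second route.} If the domain gauge $\nu_U$ blows up near $\partial U$, I would realize $p_\xi$ through the point sort $X$ via $i_U$. Alternatively I would pass to the bounded emboundment of \cite{BenYaacov2008Unbounded}, where the local type is a type in a compact structure and saturation applies directly. This step carries the real content, since the realization need not be standard.

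Strong homogeneity is not needed for existence. It only gives uniqueness of the realization up to automorphisms of $\mathfrak C$ over $Ax_0$, so I would mention it only as a remark.
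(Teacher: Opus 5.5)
Your Steps~1--2 are the paper's proof. The paper only records $|Ax_0|<\kappa$ and $|\mathcal L_{\mathrm{Mar}}(U,x_0)|<\kappa$ and invokes $\kappa$-saturation, leaving implicit the consistency of $p_\xi$ that you check in Step~1 (this comes from $p_\xi\in C_\varphi(Ax_0)$ and Lemma~\ref{lem:Martin-Cauchy-profile-type}).

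Step~3 has no counterpart in the paper, which tacitly treats the pole variable as ranging over a bounded sort. As written, it also does not resolve the case it targets. Route one bounds $\nu_X(i_U(y))$, but the relevant gauge is $\nu_U$ of the domain sort $X_U$, where $\varphi_U$ takes its pole. So route one helps only when $\nu_U$ is already controlled by $\nu_X\circ i_U$, and then nothing needs fixing. If instead $\nu_U$ blows up at $\partial U$ with compact sublevel sets, no elementary extension adds finite-gauge points to $X_U$. Route two can then yield at most the point at infinity of the emboundment. Realizing the type in $X$ instead leaves the language, since $\varphi_U$ is not a predicate on $X$.

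So present Steps~1--2 as the proof, at the paper's level of rigor, rather than calling Step~3 its real content.
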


\begin{proof}
	The preceding cardinal estimate gives $|Ax_0|<\kappa$ and
	$|\mathcal L_{\mathrm{Mar}}(U,x_0)|<\kappa$. Hence
	$\kappa$-saturation realizes every complete local type over $Ax_0$.
\end{proof}

%\begin{definition}[Minimal Martin boundary] 
%A nonzero positive harmonic function $h\in H^+(X)$ is called {minimal} if for every $ v\in H^+(X)$, $  0\le v\le h$ implies $    v=ch$ for some $c\in[0,1]$. Equivalently, $  \mathbb R_{\ge0}h$ is an extreme ray of the cone $H^+(X)$. 
%The minimal Martin boundary is$$    \partial_mX    =    \left\{        \xi\in\partial_{M}X:        k_\xi        \text{ is a minimal positive harmonic function}    \right\}.$$\end{definition}

\subsection{Keisler measures and minimal types}

Let
$$
\partial_M^{\mathrm{np}}(Ax_0)\subseteq C_\varphi(Ax_0)
$$
denote the non-principal locus. Theorem~\ref{thm:global-Martin-profile-correspondence}
restricts to a homeomorphism
$$
\Gamma:\partial_MU\to\partial_M^{\mathrm{np}}(Ax_0).
$$
Let $\operatorname{Ev}(h;x)=h(x)$.

\begin{definition}[Positive harmonic evaluation type cone]
\label{def:positive-harmonic-evaluation-cone}
Define
$$\mathscr H^+(Ax_0)=\{\operatorname{tp}_{\operatorname{Ev}}(h/Ax_0):h\in H^+(U)\}$$
and let $0=\operatorname{tp}_{\operatorname{Ev}}(0/Ax_0)$. Its normalized section is
$$\mathscr H_1(Ax_0)=\{q\in\mathscr H^+(Ax_0):\operatorname{Ev}^q(x_0)=1\}.$$
The order $r\le q$, addition, and multiplication by $c\ge0$ are external pointwise operations on evaluation functions: for example, $\operatorname{Ev}^{cq}(a)=c\operatorname{Ev}^q(a)$ for $a\in Ax_0$. These operations introduce no scalar sort or scalar quantifier.
\end{definition}

Density of $A$ and continuity of harmonic functions show that an evaluation type in $\mathscr H^+(Ax_0)$ determines its harmonic representative uniquely. Consequently, pointwise inequalities on $Ax_0$ extend to all of $U$.

\begin{lemma}[Compactness and topology of normalized harmonic types]
\label{lem:compact-convex-harmonic-profiles}
The set $\mathscr H_1(Ax_0)$ is compact and convex. Its evaluation-type topology, generated by the coordinates $q\mapsto\operatorname{Ev}^q(a)$ for $a\in Ax_0$, agrees with the topology transported from compact-open convergence on $H_1^+(U,x_0)=\{h\in H^+(U):h(x_0)=1\}$.
\end{lemma}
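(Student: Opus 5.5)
The plan is to transport everything through the map $\Theta:H_1^+(U,x_0)\to\mathscr H_1(Ax_0)$, $h\mapsto\operatorname{tp}_{\operatorname{Ev}}(h/Ax_0)$. First I will verify that $\Theta$ is a bijection. It is surjective by the definition of $\mathscr H_1(Ax_0)$. It is injective by the remark preceding the lemma: density of $A$ in $U$ and continuity of harmonic functions show that an evaluation type determines its harmonic representative. Convexity then follows at once. The external operations on evaluation functions are pointwise on $Ax_0$, so $t\Theta(h_1)+(1-t)\Theta(h_2)$ has coordinates $th_1(a)+(1-t)h_2(a)$, which are exactly the coordinates of $\Theta(th_1+(1-t)h_2)$. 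Moreover, $th_1+(1-t)h_2$ is positive harmonic on $U$, since the harmonic sort is linear, and its value at $x_0$ is $1$.

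Next I will prove compactness of $H_1^+(U,x_0)$ in the compact-open topology, working as in Lemma~\ref{lem:martin-profile-compactness} and Lemma~\ref{prop:Martin-boundary-harmonic-profile}. The Harnack inequality, equivalent to monotone convergence by \cite[Theorem~1]{LW1965axiomatic}, bounds every $h\in H_1^+(U,x_0)$ on each compact $C\ni x_0$ by $C_{C,U}$. Harnack compactness then gives that every net has a locally uniformly convergent subnet. Its limit $h$ is harmonic, nonnegative, and satisfies $h(x_0)=1$, so it lies in $H_1^+(U,x_0)$. For this step I expect to assume $U$ connected, as with the domains in $\tau_G$, so that the Harnack constant applies across all of $U$.

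Then I will compare the topologies. The map $\Theta$ is continuous from the compact-open topology to the evaluation-type topology, because each coordinate $h\mapsto h(a)$ is continuous under locally uniform convergence, the set $\{a\}$ being compact. The evaluation-type topology is a subspace of the product $\mathbb R^{Ax_0}$, hence Hausdorff. A continuous bijection from a compact space onto a Hausdorff space is a homeomorphism. This shows that $\mathscr H_1(Ax_0)$ is compact and that the two topologies agree.

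The main obstacle is the compactness step. It is where the Brelot axioms genuinely enter, and one must make sure the limit of a subnet is still harmonic and nonzero, with the normalization preserved. The point-evaluation functionals involved are continuous only on compacta, so I will rely on local uniform convergence rather than mere pointwise convergence on $A$. The continuous-bijection argument then avoids any need to show directly that coordinatewise convergence on the dense set $A$ forces locally uniform convergence.
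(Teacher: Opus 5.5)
Your proposal is correct and follows essentially the same route as the paper. The paper proves compact-open compactness of $H_1^+(U,x_0)$ by Harnack compactness, gets injectivity of restriction to $Ax_0$ from the density of $A$, and concludes with the compact-to-Hausdorff continuous bijection argument, deriving convexity from the pointwise operations. Your connectedness caveat is consistent with $U\in\tau_G$ being a domain.
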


\begin{proof}
Harnack's inequalities bound $H_1^+(U,x_0)$ uniformly on every compact subset of $U$. The Harnack compactness theorem then gives a compact-open convergent subnet of every net, and its limit remains positive harmonic and has value one at $x_0$ \cite[Theorem~1]{LW1965axiomatic}. Thus $H_1^+(U,x_0)$ is compact in the compact-open topology.

Restriction to $Ax_0$ is continuous from $H_1^+(U,x_0)$ to the Hausdorff product space $[0,\infty)^{Ax_0}$, and it is injective because $A$ is dense. A continuous injection from a compact space into a Hausdorff space is a homeomorphism onto its image. That image is exactly $\mathscr H_1(Ax_0)$, which proves compactness and identifies the two topologies. Convexity follows from the external pointwise convex operations on normalized positive harmonic functions.
\end{proof}

\begin{definition}[Minimal positive harmonic ray]
\label{def:minimal-positive-harmonic-ray}
For $q\in\mathscr H^+(Ax_0)\setminus\{0\}$, the ray $\mathbb R_{\ge0}q$ is minimal if every $r\in\mathscr H^+(Ax_0)$ satisfying $0\le r\le q$ has the form $r=cq$ for some $c\in[0,1]$. We then also call $q$ minimal.
\end{definition}

\begin{lemma}[Minimal rays and extreme points]
\label{lem:minimal-type-extreme}
For $q\in\mathscr H_1(Ax_0)$, the ray $\mathbb R_{\ge0}q$ is minimal if and only if $q\in\operatorname{Ext}(\mathscr H_1(Ax_0))$.
\end{lemma}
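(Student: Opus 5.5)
The plan is to transfer the statement to the cone $H^+(U)$ and its normalized section $H_1^+(U,x_0)$, and prove the classical equivalence between minimal rays and extreme points there. The transfer uses the remark after Definition~\ref{def:positive-harmonic-evaluation-cone}: each evaluation type in $\mathscr H^+(Ax_0)$ determines a unique harmonic representative. Order, addition and scaling on types are defined pointwise on $Ax_0$, so they correspond exactly to the pointwise operations on representatives. By density of $A$ and continuity, inequalities on $Ax_0$ extend to all of $U$. Hence $q\mapsto h_q$ is an order- and cone-isomorphism, and convex combinations in $\mathscr H_1(Ax_0)$ correspond to convex combinations in $H_1^+(U,x_0)$, as in Lemma~\ref{lem:compact-convex-harmonic-profiles}. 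After this, everything is stated for a normalized $h=h_q$ with $h(x_0)=1$.

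\emph{Minimal implies extreme.} Suppose $h=t h_1+(1-t)h_2$ with $h_i\in H_1^+(U,x_0)$ and $0<t<1$. Then $0\le t h_1\le h$. Minimality gives $t h_1=c h$ for some $c\in[0,1]$. Evaluating at $x_0$ gives $t=c$, so $h_1=h$. The same argument gives $h_2=h$, so $h$ is extreme.

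\emph{Extreme implies minimal.} Let $0\le r\le h$ with $r\in H^+(U)$. If $r=0$, take $c=0$. If $r=h$, take $c=1$. Otherwise set $s=h-r\in H^+(U)$, which is nonzero. Put $t=r(x_0)$; then $s(x_0)=1-t$. Next check that $t\in(0,1)$. This is where the one genuine analytic input enters: a nonzero positive harmonic function on the domain $U$ cannot vanish at $x_0$. This follows from the Harnack inequality quoted in the Remark after the monotone convergence principle, applied on compact connected neighbourhoods chained along $U$ (which is a domain, being Greenian in $\tau_G$). Thus $r(x_0)>0$ and $s(x_0)>0$. Now write
\[
h=t\,\frac{r}{t}+(1-t)\,\frac{s}{1-t},
\]
a proper convex combination of elements of $H_1^+(U,x_0)$. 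Extremality forces $r/t=h$, that is, $r=t h$ with $t\in[0,1]$. Transporting back through the isomorphism gives $r=cq$ at the level of types.

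I expect the main obstacle to be this strict-positivity step. I will invoke the Harnack inequality with $K$ a compact connected set containing both $x_0$ and a point where $r>0$. The inequality $\sup_K r\le C_{K,U}\inf_K r$ then gives $r(x_0)>0$. Every other step is formal bookkeeping of the pointwise operations on $Ax_0$.
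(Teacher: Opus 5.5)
Your proposal is correct and follows essentially the same argument as the paper: the forward direction is identical, and in the converse the paper splits on whether $c=\operatorname{Ev}^r(x_0)$ equals $0$, equals $1$, or lies in $(0,1)$, using the minimum/Harnack principle to force $r=0$ or $r=q$ in the two endpoint cases, which is only a reorganization of your split into $r=0$, $r=h$, or neither. The chaining of connected neighbourhoods is unnecessary, since the quoted Harnack inequality on the domain $U$ with $K=\{x_0,y\}$, where $r(y)>0$, already gives $r(y)\le C_{K,U}\,r(x_0)$.
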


\begin{proof}
Suppose that $q$ is minimal and $q=tq_1+(1-t)q_2$, where $0<t<1$ and $q_1,q_2\in\mathscr H_1(Ax_0)$. Since $0\le tq_1\le q$, minimality gives $tq_1=cq$. Evaluation at $x_0$ yields $c=t$, hence $q_1=q$; the same argument gives $q_2=q$. Thus $q$ is extreme.

Conversely, suppose that $q$ is extreme and $0\le r\le q$. Put $c=\operatorname{Ev}^r(x_0)$. If $c=0$, the positive harmonic representative of $r$ vanishes at the interior point $x_0$, so the minimum principle, equivalently the Harnack principle, gives $r=0$ \cite[Theorem~1]{LW1965axiomatic}. If $c=1$, apply the same argument to the positive harmonic type $q-r$, whose value at $x_0$ is zero, and obtain $r=q$. When $0<c<1$, both $q_1=c^{-1}r$ and $q_2=(1-c)^{-1}(q-r)$ belong to $\mathscr H_1(Ax_0)$ and $q=cq_1+(1-c)q_2$. Extremality gives $q_1=q$, hence $r=cq$.
\end{proof}

For $p\in\partial_M^{\mathrm{np}}(Ax_0)$, write $k_p=k_{\Gamma^{-1}(p)}$ for the normalized positive harmonic function given by Lemma~\ref{prop:Martin-boundary-harmonic-profile}, and define
$$\mathfrak h:\partial_M^{\mathrm{np}}(Ax_0)\longrightarrow\mathscr H_1(Ax_0),\quad
\mathfrak h(p)=\operatorname{tp}_{\operatorname{Ev}}(k_p/Ax_0).$$

\begin{proposition}[Boundary kernels generate positive harmonic functions]
\label{lem:Martin-kernel-generation}
Let $U$ be Greenian and fix $x_0\in U$. In the compact-open topology, the closed positive harmonic cone satisfies
$H^+(U)=\overline{\operatorname{cone}}\{k_\xi:\xi\in\partial_MU\}$.
Equivalently, its normalized section satisfies
$H_1^+(U,x_0)=\overline{\operatorname{co}}\{k_\xi:\xi\in\partial_MU\}$.
\end{proposition}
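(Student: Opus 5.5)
The plan is to prove the normalized statement first and derive the cone version by scaling. Every nonzero $h\in H^+(U)$ has $h(x_0)>0$ by the Harnack principle, so $h/h(x_0)\in H_1^+(U,x_0)$. Conversely, positive multiples and compact-open limits of positive harmonic functions remain in $H^+(U)$, by Harnack compactness \cite[Theorem~1]{LW1965axiomatic}. Hence it suffices to show
$$H_1^+(U,x_0)=\overline{\operatorname{co}}\{k_\xi:\xi\in\partial_MU\}.$$
The inclusion $\supseteq$ is immediate. Each $k_\xi$ lies in $H_1^+(U,x_0)$ by Lemma~\ref{prop:Martin-boundary-harmonic-profile}. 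This set is convex and, by Lemma~\ref{lem:compact-convex-harmonic-profiles}, compact in the compact-open topology, so it contains the closed convex hull.

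For $\subseteq$, I will approximate a given $h\in H_1^+(U,x_0)$ by convex combinations of Martin kernels with interior poles, and then push those poles to the boundary. Fix an exhaustion of $U$ by regular domains $V\Subset U$ containing $x_0$ (a directed family, so the argument runs with nets). On each $V$ form the reduced function $h_V=\widehat R_h^{U\setminus V}$, as in the proof of Proposition~\ref{cor:riesz-decomposition}. Then $h_V$ is a potential on $U$, it equals $h$ on $V$ (off a polar set, and everywhere on $V$ after regularization), and it is harmonic off $\partial V$. Proposition~\ref{cor:riesz-decomposition} gives a measure $\lambda_V$ supported on $\partial V$ with $h_V=G_U*\lambda_V$. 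Rewriting this as
$$h_V(x)=\int K_U(x,y)\,G_U(x_0,y)\,\lambda_V(\mathrm dy),$$
set $\mu_V(\mathrm dy)=G_U(x_0,y)\lambda_V(\mathrm dy)$. Its total mass is $h_V(x_0)=h(x_0)=1$, so $\mu_V$ is a probability measure on $\partial V$. Thus $h|_V$ is a barycenter of the kernels $K_U(\cdot,y)$ with $y\in\partial V$.

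On any fixed compact $C\Subset V$, the map $y\mapsto K_U(\cdot,y)|_C$ is continuous on $\partial V$ into $C(C)$. A barycenter of a probability measure on a compact set therefore lies in the closed convex hull of the image. So $h|_C$ is a uniform limit on $C$ of finite convex combinations of profiles $K_U(\cdot,y)$ with poles $y\in\partial V$. Letting $V$ increase, every compact set is eventually covered. Hence $h$ lies in the compact-open closed convex hull of $\{K_U(\cdot,y):y\notin V\}$ for every $V$, i.e.\ in the intersection over $V$ of the closed convex hulls of the kernels with poles eventually leaving every compact set.

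The remaining step is the main obstacle: showing that this intersection lies in $\overline{\operatorname{co}}\{k_\xi\}$. I would work in the compact space $\widehat U^M$. The set $F_V$ of profiles (converted back through $\beta^{-1}$ to kernels) with poles outside $V$, together with their limits, is compact. By Lemma~\ref{lem:martin-profile-compactness} and Lemma~\ref{prop:Martin-boundary-harmonic-profile}, the intersection $\bigcap_V F_V$ is exactly $\{k_\xi:\xi\in\partial_MU\}$. On each compact $C$, the kernels with poles outside a large $V$ form an equicontinuous, uniformly bounded family, by the Harnack argument in Lemma~\ref{lem:martin-profile-compactness}. So a standard compactness argument for decreasing compact sets in a locally convex space applies: if $h\notin\overline{\operatorname{co}}\{k_\xi\}$, Hahn--Banach separation on a single compact $C$ gives a continuous functional $L$ and $\varepsilon>0$ with $L(h)>\sup_\xi L(k_\xi)+\varepsilon$. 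By compactness, $\sup_{F_V}L<\sup_\xi L(k_\xi)+\varepsilon$ for large $V$, which contradicts $h\in\overline{\operatorname{co}}F_V$.

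Care is needed on two points. First, separation should be done in $C(C)$, where the functionals are measures, and the closed convex hull there needs to be compared with the compact-open one. Second, the base-point pole $y=x_0$ and the diagonal extension of $\varphi_U$ must be handled, which the standing assumptions allow since the poles eventually avoid every compact set.
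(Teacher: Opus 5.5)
Your overall strategy works, but the reduced function is taken on the wrong set. You need $h_V=\widehat R_h^{\overline V}$, the reduction onto the compact set $\overline V$, which is what the paper uses. You wrote $\widehat R_h^{U\setminus V}$ instead. Since $V\Subset U$ and $\partial V\subseteq U\setminus V$, the minimum principle shows that any superharmonic $v\ge h$ on $U\setminus V$ also satisfies $v\ge h$ on $V$. Hence $\widehat R_h^{U\setminus V}=h$. That function is harmonic, has zero Riesz measure, and is not a potential, so your identity $\mu_V(\partial V)=h(x_0)=1$ would fail. In the Riesz-decomposition proof that reduction serves the opposite purpose: it extracts the harmonic part. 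Every property you then list does hold for $\widehat R_h^{\overline V}$: it is a potential, equals $h$ on $V$, is harmonic off $\partial V$, and has Riesz measure carried by $\partial V$. With this correction, your first half is exactly the paper's.

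From there the routes diverge.
\begin{itemize}
\item The paper takes a weak-$*$ convergent subnet of the probability measures $\mu_V$ on the compact space $\widehat U^M$. It notes that the limit $\mu$ is carried by $\partial_MU$ and passes to the limit under the integral to obtain $h=\int k_\xi\,\mathrm{d}\mu(\xi)$. Membership in the closed convex hull is then a barycenter statement.
\item You never form a limit measure. You keep the finite-level fact $h|_C\in\overline{\operatorname{co}}\{K_U(\cdot,y)|_C:y\in\partial V\}$ in $C(C)$. You then separate $h|_C$ from $\overline{\operatorname{co}}\{k_\xi|_C\}$ by a measure $L$ on one compact $C$, which is legitimate because a basic compact-open neighbourhood involves a single compact set. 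Finally, compactness of $\widehat U^M$ together with Lemma~\ref{prop:Martin-boundary-harmonic-profile}, applied along a net of poles, gives $\limsup_V\sup_{y\notin V}L(K_U(\cdot,y)|_C)\le\sup_\xi L(k_\xi|_C)$.
\end{itemize}
Your version avoids justifying the limit under the integral, where $K_U(x,\cdot)$ must be extended continuously to $\widehat U^M$ away from its pole at $x$. It also keeps the relevant topologies explicit. The paper's version is shorter and produces the representing measure on $\partial_MU$ directly. In your approach that measure would have to be recovered afterwards from convex-hull membership in the compact set $\{k_\xi\}$, by standard barycenter theory.
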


\begin{proof}
Fix $h\in H_1^+(U,x_0)$ and direct the relatively compact open sets $V$ with $x_0\in V\Subset U$ by inclusion. Put $p_V=\widehat R_h^{\overline V}$. Since $\overline V$ is relatively compact, $p_V$ is a potential \cite[Section~21, Proposition~10]{Brelot1960}.
By the defining property of reduction, $p_V=h$ on $V$, while the
off-support theorem for balayage implies that $p_V$ is harmonic on
$U\setminus\overline V$. Hence $p_V$ is harmonic on
$U\setminus\partial V$. Proposition~\ref{cor:riesz-decomposition} therefore gives a unique
positive Radon measure $\lambda_V$ such that
$p_V=G_U*\lambda_V,
\operatorname{supp}(\lambda_V)\subseteq\partial V.$

Define $\mathrm{d}\mu_V(y)=G_U(x_0,y)\mathrm{d}\lambda_V(y)$.
Since $p_V(x_0)=h(x_0)=1$, each $\mu_V$ is a probability measure.
After passing to a subnet, $\mu_V$ converges weak-$*$ to a probability
measure $\mu$. Since $\partial V$ eventually leaves every compact subset
of $U$, $\mu$ is concentrated on $\partial_MU$. By
Lemma~\ref{prop:Martin-boundary-harmonic-profile}, the Martin kernels
extend to the boundary as the functions $k_\xi$ in the compact-open
topology. Passing to the limit therefore gives
$h(x)=\int_{\partial_MU}k_\xi(x)\,\mathrm{d}\mu(\xi)$ for every $x\in U$.
Hence $h$ is a barycenter of $\{k_\xi:\xi\in\partial_MU\}$ and so belongs
to its closed convex hull \cite[Proposition~1.1]{Phelps2001Choquet}. The reverse inclusion follows from the closed
convexity of $H_1^+(U,x_0)$.

Finally, every nonzero $g\in H^+(U)$ has $g(x_0)>0$ by the minimum
principle, and $g/g(x_0)\in H_1^+(U,x_0)$. Scaling the normalized identity
gives the conical identity.
\end{proof}

\begin{lemma}[Topology of the kernel-type map]
\label{prop:kernel-type-homeomorphism}
The map $\mathfrak h$ is a homeomorphism from $\partial_M^{\mathrm{np}}(Ax_0)$ onto its image in $\mathscr H_1(Ax_0)$. Its restriction identifies
$$\partial_m^{\mathrm{np}}(Ax_0):=\mathfrak h^{-1}\bigl(\operatorname{Ext}(\mathscr H_1(Ax_0))\bigr)$$
homeomorphically with $\operatorname{Ext}(\mathscr H_1(Ax_0))$. Moreover, $\Gamma(\partial_mU)=\partial_m^{\mathrm{np}}(Ax_0)$.
\end{lemma}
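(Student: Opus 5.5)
We factor $\mathfrak h$ as a composition of three maps whose topological behaviour is already controlled:
$$\partial_M^{\mathrm{np}}(Ax_0)\xrightarrow{\ \Gamma^{-1}\ }\partial_MU\xrightarrow{\ \xi\mapsto k_\xi\ }H_1^+(U,x_0)\xrightarrow{\ h\mapsto\operatorname{tp}_{\operatorname{Ev}}(h/Ax_0)\ }\mathscr H_1(Ax_0).$$
The first map is a homeomorphism by Theorem~\ref{thm:global-Martin-profile-correspondence}, restricted to the non-principal locus. The third is a homeomorphism by Lemma~\ref{lem:compact-convex-harmonic-profiles}. So everything reduces to the middle map $\kappa:\xi\mapsto k_\xi$.

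For continuity of $\kappa$ we use Lemma~\ref{prop:Martin-boundary-harmonic-profile}. The profile of $\xi$ is $\beta\circ k_\xi$, and $\beta$ is a strictly increasing continuous bijection of $[0,\infty]$ onto its image, so $\beta^{-1}$ is continuous there. If $\xi_i\to\xi$ in $\widehat U^M$, then $\beta\circ k_{\xi_i}\to\beta\circ k_\xi$ uniformly on compacta.

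The values $k_{\xi_i}$ on a compact $C$ stay in a fixed compact subinterval of $(0,\infty)$ by Harnack's inequality. On that subinterval $\beta^{-1}$ is uniformly continuous, which gives $k_{\xi_i}\to k_\xi$ compact-open.

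For injectivity, $k_\xi=k_\eta$ forces equal profiles, hence $\xi=\eta$, since the compactification separates boundary profiles by standing assumption. For openness onto the image, $\partial_MU$ may fail to be compact. We therefore argue directly: if $k_{\xi_i}\to k_\xi$, then $\beta\circ k_{\xi_i}\to\beta\circ k_\xi$ uniformly on compacta, by uniform continuity of $\beta$ on bounded ranges. This is exactly convergence $\xi_i\to\xi$ in the profile topology. Hence $\kappa$ is a homeomorphism onto its image, and so is $\mathfrak h$.

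The second claim is then formal. Restricting a homeomorphism onto its image to the preimage of a subset gives a homeomorphism onto (image $\cap$ subset). So we must check that $\operatorname{Ext}(\mathscr H_1(Ax_0))\subseteq\operatorname{im}\mathfrak h$, i.e.\ that every extreme normalized positive harmonic function is some $k_\xi$.

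This is the main obstacle. We would use Proposition~\ref{lem:Martin-kernel-generation}, transported by Lemma~\ref{lem:compact-convex-harmonic-profiles}, which gives $\mathscr H_1(Ax_0)=\overline{\operatorname{co}}\,\mathfrak h(\partial_M^{\mathrm{np}})$. Milman's converse to Krein--Milman then places every extreme point in the closure of $\mathfrak h(\partial_M^{\mathrm{np}})$.

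That closure is contained in $\mathfrak h(\partial_M^{\mathrm{np}})\cup\{\operatorname{tp}(K_U(\cdot,y))\text{-limits}\}$. To avoid this issue, we extend $\kappa$ to all of $\widehat U^M$, with interior points $y$ mapped to $K_U(\cdot,y)$ in the sense of its profile. We note that interior kernels $K_U(\cdot,y)$ are not harmonic at $y$, so they are not elements of $H_1^+$.

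The cleaner route is the following. The measure $\mu$ constructed in the proof of Proposition~\ref{lem:Martin-kernel-generation} is concentrated on $\partial_MU$, and $\partial_MU$ is closed in the compact space $\widehat U^M$ (assuming $U$ is open in $\widehat U^M$), hence compact. So $\mathfrak h(\partial_M^{\mathrm{np}})$ is a compact set whose closed convex hull is $\mathscr H_1$, and Milman's theorem puts $\operatorname{Ext}$ inside it.

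Finally, $\Gamma(\partial_mU)=\partial_m^{\mathrm{np}}(Ax_0)$ follows by definition. The set $\partial_mU$ consists of those $\xi$ with $k_\xi$ minimal; by Lemma~\ref{lem:minimal-type-extreme} these are exactly the $\xi$ with $\mathfrak h(\Gamma\xi)$ extreme.
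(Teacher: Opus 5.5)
Your proof is correct and follows essentially the same route as the paper. Harnack bounds, together with uniform continuity of $\beta$ and $\beta^{-1}$ on bounded ranges, identify convergence of Martin types with compact-open convergence of the $k_\xi$, and Lemma~\ref{lem:compact-convex-harmonic-profiles} transfers this to $\mathscr H_1(Ax_0)$; Milman's theorem applied to the compact set $B=\mathfrak h(\partial_M^{\mathrm{np}}(Ax_0))$ gives $\operatorname{Ext}(\mathscr H_1(Ax_0))\subseteq B$, and Lemma~\ref{lem:minimal-type-extreme} identifies the minimal locus. What you add is a justification that $B$ is compact, which the paper only asserts. Your hedge there is unnecessary: $U$ is locally compact and densely embedded in the Hausdorff space $\widehat U^M$, so it is open, and hence $\partial_MU$ is closed and compact. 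You can also drop the abandoned detour about interior kernels.
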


\begin{proof}
Through $\Gamma$, convergence in
$\partial_M^{\mathrm{np}}(Ax_0)$ is compact-open convergence of
$\beta(k_p)$. Harnack bounds on compact subsets of $U$, together
with uniform continuity of $\beta^{-1}$ on bounded intervals, show
that this is equivalent to compact-open convergence of $k_p$.
Lemma~\ref{lem:compact-convex-harmonic-profiles} therefore implies
that $\mathfrak h$ is a homeomorphism onto its image. Injectivity
follows from separation by Martin profiles.

Let
$B=\mathfrak h(\partial_M^{\mathrm{np}}(Ax_0))$.
By Proposition~\ref{lem:Martin-kernel-generation}, transported through the
affine homeomorphism
$\Theta:H_1^+(U,x_0)\to\mathscr H_1(Ax_0)$,
we have
$\mathscr H_1(Ax_0)=\overline{\operatorname{co}}\,B$.
The set $B$ is compact, so Milman's theorem gives
$\operatorname{Ext}(\mathscr H_1(Ax_0))\subseteq B$
\cite[Proposition~1.5]{Phelps2001Choquet}.

Now $\xi\in\partial_mU$ if and only if $k_\xi$ generates a minimal
positive harmonic ray, which by
Lemma~\ref{lem:minimal-type-extreme} is equivalent to
$\mathfrak h(\Gamma(\xi))\in
\operatorname{Ext}(\mathscr H_1(Ax_0))$.
Hence
$\mathfrak h(\Gamma(\partial_mU))
=\operatorname{Ext}(\mathscr H_1(Ax_0))$,
and injectivity of $\mathfrak h$ gives
$\Gamma(\partial_mU)=\partial_m^{\mathrm{np}}(Ax_0)$.
The first paragraph then yields the asserted homeomorphism on the
minimal loci.
\end{proof}

Let $\iota:\partial_m^{\mathrm{np}}(Ax_0)\hookrightarrow S_\varphi(Ax_0)$ be the inclusion. A local Keisler probability measure will mean a regular Borel probability measure on $S_\varphi(Ax_0)$.

\begin{theorem}[Second isomorphism theorem]
\label{thm:second-Martin-isomorphism}
Assume that the compact convex space $\mathscr H_1(Ax_0)$ is a metrizable Choquet simplex. Then every $q\in\mathscr H_1(Ax_0)$ has a unique local Keisler probability measure $\nu_q$ concentrated on $\partial_m^{\mathrm{np}}(Ax_0)$ such that, for every $a\in Ax_0$,
$$\operatorname{Ev}^q(a)=\int_{\partial_m^{\mathrm{np}}(Ax_0)}\operatorname{Ev}^{\mathfrak h(p)}(a)\,\mathrm{d}\nu_q(p).$$
The correspondence $q\mapsto\nu_q$ is affine. Its inverse sends a probability measure concentrated on the minimal Martin type locus to the barycenter in $\mathscr H_1(Ax_0)$ of its pushforward by $\mathfrak h$.

If $\partial_m^{\mathrm{np}}(Ax_0)$ is compact, then $\mathscr H_1(Ax_0)$ is a Bauer simplex and the correspondence is an affine homeomorphism for the evaluation-type topology and the weak-$*$ topology.
\end{theorem}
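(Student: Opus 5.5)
The plan is to transport the classical Choquet--Bishop--de Leeuw uniqueness theorem through the identifications already established. Write $K=\mathscr H_1(Ax_0)$; by Lemma~\ref{lem:compact-convex-harmonic-profiles} it is compact convex in the evaluation-type topology. We are assuming it is a metrizable Choquet simplex, so $\operatorname{Ext}(K)$ is a $G_\delta$ set. Choquet's theorem then gives, for each $q\in K$, a probability measure $m_q$ on $K$ with barycenter $q$ and $m_q(\operatorname{Ext}K)=1$. The simplex hypothesis makes $m_q$ unique (Choquet--Meyer).

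Barycenter in $K$ means $\operatorname{Ev}^q(a)=\int \operatorname{Ev}^{r}(a)\,\mathrm dm_q(r)$ for every coordinate $a\in Ax_0$. This holds because the coordinate maps $r\mapsto\operatorname{Ev}^r(a)$ are continuous affine functions and, by density of $A$, separate points. By Lemma~\ref{prop:kernel-type-homeomorphism}, $\mathfrak h$ restricts to a homeomorphism $\partial_m^{\mathrm{np}}(Ax_0)\to\operatorname{Ext}(K)$. Since $\operatorname{Ext}(K)$ is Borel (indeed $G_\delta$), the set $\partial_m^{\mathrm{np}}(Ax_0)$ is a Borel subset of $C_\varphi(Ax_0)\subseteq S_\varphi(Ax_0)$, being the preimage under the continuous map $\mathfrak h$ on the closed set of non-principal types. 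Here I would use that $\partial_M^{\mathrm{np}}(Ax_0)=\Gamma(\partial_MU)$ is compact, because $\partial_MU$ is closed in $\widehat U^M$ when $U$ is open in the compactification.

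Define
\[
\nu_q=(\iota\circ(\mathfrak h|_{\partial_m^{\mathrm{np}}})^{-1})_*\,m_q .
\]
This is a Borel measure; it is regular because metrizable compact spaces carry only regular Borel measures, and I would extend by zero from the compact set $C_\varphi(Ax_0)$. Change of variables then gives the displayed integral identity.

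\textbf{Uniqueness.} Let $\nu$ be any measure with the stated property. Push it forward by $\mathfrak h$ to obtain a measure on $\operatorname{Ext}(K)$ with barycenter $q$; simplex uniqueness forces this pushforward to equal $m_q$. Injectivity of $\mathfrak h$ on the minimal locus then recovers $\nu=\nu_q$.

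\textbf{Affineness.} The measure $tm_{q_1}+(1-t)m_{q_2}$ is supported on $\operatorname{Ext}K$ and has barycenter $tq_1+(1-t)q_2$, so it equals $m_{tq_1+(1-t)q_2}$ by uniqueness. Pushforward is linear, hence $q\mapsto\nu_q$ is affine. The inverse description is immediate: $\nu\mapsto$ barycenter of $\mathfrak h_*\nu$.

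\textbf{Compact case.} If $\partial_m^{\mathrm{np}}(Ax_0)$ is compact, then $\operatorname{Ext}(K)$ is closed, so $K$ is a Bauer simplex. The barycenter map $\mathcal P(\operatorname{Ext}K)\to K$ is then a continuous affine bijection from a compact space onto a Hausdorff space, hence a homeomorphism (Bauer's theorem). Composing with the weak-$*$ homeomorphism $\mathcal P(\partial_m^{\mathrm{np}})\cong\mathcal P(\operatorname{Ext}K)$ induced by $\mathfrak h$ gives the affine homeomorphism.

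\textbf{Main obstacle.} The delicate step is the measurability and regularity bookkeeping: showing that $\partial_m^{\mathrm{np}}(Ax_0)$ is Borel in $S_\varphi(Ax_0)$, and that the pushforward is a regular measure there. I would handle this through metrizability of $K$, which makes $C_\varphi(Ax_0)\cong\widehat U^M$ metrizable as well, since it embeds via $\mathfrak h$ and $\Gamma$ into metrizable data. If that transfer fails, I would instead work with Radon measures on the compact set $C_\varphi(Ax_0)$ and use the $G_\delta$ property of $\operatorname{Ext}K$ directly.
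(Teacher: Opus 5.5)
Your proposal is correct and follows essentially the same route as the paper. You identify $\partial_m^{\mathrm{np}}(Ax_0)$ with $\operatorname{Ext}(\mathscr H_1(Ax_0))$ via $\mathfrak h$, take the unique Choquet--Meyer representing measure, push it forward by $\iota\circ\mathfrak h^{-1}$, read off the integral identity from the continuous affine coordinates $r\mapsto\operatorname{Ev}^r(a)$, and in the compact case use that the barycenter map on $\mathcal P(\partial_m^{\mathrm{np}}(Ax_0))$ is a continuous affine bijection from a compact space onto a Hausdorff one. Your extra Borel and regularity bookkeeping fills in details the paper leaves implicit, though metrizability is only needed on the compact set $\partial_M^{\mathrm{np}}(Ax_0)$ (which embeds into $\mathscr H_1(Ax_0)$ via $\mathfrak h$), not on all of $C_\varphi(Ax_0)$ as you claimed.
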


\begin{proof}
By Lemma~\ref{prop:kernel-type-homeomorphism}, the map $\mathfrak h$ identifies
$\partial_m^{\mathrm{np}}(Ax_0)$ homeomorphically with
$\operatorname{Ext}(\mathscr H_1(Ax_0))$.
The metrizable Choquet theorem gives, for each
$q\in\mathscr H_1(Ax_0)$, a probability measure $\mu_q$
concentrated on this extreme boundary with barycenter $q$, and the
simplex hypothesis makes $\mu_q$ unique
\cite[Section~3, Proposition~10.10]{Phelps2001Choquet}.
Set
$\nu_q=\iota_*(\mathfrak h^{-1})_*\mu_q$.
Then $\nu_q$ is a local Keisler probability measure concentrated on
$\partial_m^{\mathrm{np}}(Ax_0)$.

For $a\in Ax_0$, the map
$F_a(r)=\operatorname{Ev}^r(a)$ is continuous and affine, so the
barycenter identity gives
$\operatorname{Ev}^q(a)
=\int\operatorname{Ev}^{\mathfrak h(p)}(a)\,\mathrm{d}\nu_q(p)$.
Conversely, pushing a probability measure concentrated on
$\partial_m^{\mathrm{np}}(Ax_0)$ forward by $\mathfrak h$ and taking
its barycenter gives an element of $\mathscr H_1(Ax_0)$.
Choquet uniqueness shows that these constructions are inverse and
affine.

If $\partial_m^{\mathrm{np}}(Ax_0)$ is compact, then
$\operatorname{Ext}(\mathscr H_1(Ax_0))$ is compact by
Lemma~\ref{prop:kernel-type-homeomorphism}, so
$\mathscr H_1(Ax_0)$ is Bauer. The barycenter map is then a
continuous affine bijection from the compact space
$\mathcal P(\partial_m^{\mathrm{np}}(Ax_0))$ onto the Hausdorff space
$\mathscr H_1(Ax_0)$, and hence an affine homeomorphism.
\end{proof}

\begin{corollary}[Existence and compactness of complete lifts]
\label{cor:universal-lifting-Keisler-measure}
Under the hypotheses of Theorem~\ref{thm:second-Martin-isomorphism}, every $q\in\mathscr H_1(Ax_0)$ admits a probability measure $\mu$ on $S_1(Ax_0)$ such that $(r_\varphi)_*\mu=\nu_q$. Equivalently, $\mu(F\circ r_\varphi)=\nu_q(F)$ for every $F\in C(S_\varphi(Ax_0))$. The entire fiber 
$$\operatorname{Lift}(\nu_q)=\{\mu\in\mathcal P(S_1(Ax_0)):(r_\varphi)_*\mu=\nu_q\}$$
is a nonempty compact convex subset of $\mathcal P(S_1(Ax_0))$. Thus the following diagrams commute:
\begin{center}
\begin{tikzcd}[column sep=large,row sep=large]
	\mathcal P\bigl(S_1(Ax_0)\bigr)
	\arrow{r}{(r_\varphi)_*}
	&
	\mathcal P\bigl(S_\varphi(Ax_0)\bigr)
	\\
	\{q\}
	\arrow[mapsto]{u}{\mu_q}
	\arrow[equals]{r}
	&
	\{q\}
	\arrow[mapsto]{u}[swap]{\nu_q}
\end{tikzcd}
\end{center}
\begin{center}
\begin{tikzcd}[column sep=huge,row sep=large]
	C\bigl(S_\varphi(Ax_0)\bigr)
	\arrow{r}{r_\varphi^*:F\mapsto F\circ r_\varphi}
	\arrow{d}[swap]{\nu_q}
	&
	C\bigl(S_1(Ax_0)\bigr)
	\arrow{d}{\mu_q}
	\\
	\mathbb R
	\arrow[equals]{r}
	&
	\mathbb R
\end{tikzcd}
\end{center}
\end{corollary}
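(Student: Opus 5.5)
The plan is to reduce everything to the fact that the restriction map $r_\varphi:S_1(Ax_0)\to S_\varphi(Ax_0)$ is a continuous surjection between compact Hausdorff spaces. After that, the corollary becomes a general statement about pushforwards of regular Borel probability measures.

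First I will check the topological inputs. $S_1(Ax_0)$ is compact Hausdorff in the logic topology by continuous compactness. For the purposes of $\nu_q$ it suffices to work on $C_\varphi(Ax_0)$, which is compact as a closed subset of $[0,1]^{Ax_0}$. Continuity of $r_\varphi$ was noted when it was introduced. For surjectivity onto the relevant part, every local type $p\in S_\varphi(Ax_0)$ is by definition consistent with $\operatorname{Th}_{Ax_0}(\mathfrak M)$, so it extends to a complete type by Zorn's lemma or compactness, giving a preimage. Thus $r_\varphi$ is onto $S_\varphi(Ax_0)$, and in particular onto the support of $\nu_q$ supplied by Theorem~\ref{thm:second-Martin-isomorphism}.

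Next, existence. Surjectivity makes the pullback $r_\varphi^*:C(S_\varphi(Ax_0))\to C(S_1(Ax_0))$, $F\mapsto F\circ r_\varphi$, an isometric embedding of Banach lattices that preserves the constant $1$. On its image, define the functional $\Lambda(F\circ r_\varphi)=\nu_q(F)$. This is well defined by injectivity, positive, and satisfies $\Lambda(1)=1$, so its norm is $1$. By Hahn--Banach, extend $\Lambda$ to a functional $\tilde\Lambda$ on $C(S_1(Ax_0))$ of norm $1$ with $\tilde\Lambda(1)=1$. Such a functional is automatically positive: if $0\le f\le 1$ then $\|1-f\|\le1$, so $1-\tilde\Lambda(f)\le1$. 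Riesz representation then gives a regular Borel probability measure $\mu$ with $\mu(F\circ r_\varphi)=\nu_q(F)$ for all $F$. Uniqueness in the Riesz theorem on $S_\varphi(Ax_0)$ converts this identity into $(r_\varphi)_*\mu=\nu_q$; the converse direction is just the change-of-variables formula. This establishes the stated equivalence and both commuting diagrams. As an alternative, one could approximate $\nu_q$ weak-$*$ by finitely supported measures, lift each atom to a chosen preimage type, and take a weak-$*$ cluster point.

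Finally, the structure of $\operatorname{Lift}(\nu_q)$. Nonemptiness is the previous step. Convexity holds because $(r_\varphi)_*$ is affine. Compactness follows because $\mathcal P(S_1(Ax_0))$ is weak-$*$ compact by Banach--Alaoglu, and the fiber is the intersection over $F$ of the closed sets $\{\mu:\mu(F\circ r_\varphi)=\nu_q(F)\}$. The main obstacle I expect is the surjectivity and compactness bookkeeping: one must make sure that $S_\varphi(Ax_0)$, as used for $\nu_q$, is exactly the image of $r_\varphi$, so that the positivity of $\Lambda$ is legitimate. If some local types were not extendable, I would instead restrict to the closed image $r_\varphi(S_1(Ax_0))\supseteq C_\varphi(Ax_0)\supseteq\operatorname{supp}\nu_q$ and run the same argument there.
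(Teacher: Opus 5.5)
Your proposal is correct and follows essentially the same route as the paper: pull back along the continuous surjection $r_\varphi$, define $F\circ r_\varphi\mapsto\nu_q(F)$ on the image, extend by Hahn--Banach while keeping norm one and the value $1$ at the constant function, deduce positivity, and apply the Riesz representation theorem. The paper then obtains compactness and convexity of $\operatorname{Lift}(\nu_q)$ from weak-$*$ compactness of $\mathcal P(S_1(Ax_0))$ together with continuity and affineness of $(r_\varphi)_*$, exactly as you do. Your justification of surjectivity, by extending a consistent local type to a complete type, fills in a step the paper only asserts, and your alternative via lifting finitely supported approximations would also work.
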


\begin{proof}
The restriction map $r_\varphi:S_1(Ax_0)\to S_\varphi(Ax_0)$ is a continuous surjection between compact Hausdorff spaces. Its pullback is therefore an isometric unital embedding from $C(S_\varphi(Ax_0))$ into $C(S_1(Ax_0))$. On its range define $\ell_0(F\circ r_\varphi)=\nu_q(F)$. The functional $\ell_0$ has norm one and sends $1$ to $1$.

The Hahn--Banach theorem extends $\ell_0$ to a norm-one functional $\ell$ on $C(S_1(Ax_0))$ with $\ell(1)=1$. Such a functional is positive: if $0\le G\le1$, then $\ell(G)=1-\ell(1-G)\ge1-\|1-G\|_\infty\ge0$. By the Riesz representation theorem, $\ell$ is integration against a regular Borel probability measure $\mu$. Its definition gives $\mu(F\circ r_\varphi)=\nu_q(F)$, equivalently $(r_\varphi)_*\mu=\nu_q$.

Finally, $\mathcal P(S_1(Ax_0))$ is weak-$*$ compact and the pushforward map $(r_\varphi)_*$ is continuous and affine. Hence $\operatorname{Lift}(\nu_q)$ is the nonempty inverse image of the closed singleton $\{\nu_q\}$ and is compact and convex. 
\end{proof}

\section{Local stability and harmonic barycenters}
\label{sec:local-stability-harmonic-barycenters}

Throughout this section, we fix a regular Greenian open set $U$, a point
$x_0\in U$, and a dense set $A\subseteq U$. Write $Ax_0=A\cup\{x_0\}$.
Thus the Martin compactification and every harmonic type considered below are local objects attached to $(U,x_0)$.

\subsection{Evaluation types and local stability}
\label{subsec:evaluation-types-local-stability}

Let $T$ be the complete theory of the chosen local $\mathcal L_{\mathrm{Har}}$-structure, together with any explicitly named predicates used in this section.

\begin{definition}[Evaluation type]
For $h\in H_1^+(U,x_0)$, the evaluation type over $Ax_0$ is
$\operatorname{tp}_{\operatorname{Ev}}(h/Ax_0)$, namely the complete
record of all predicates obtained from $\operatorname{Ev}(h;a)=\operatorname{Eval}_U(a,h)=h(a)$,
with $a\in Ax_0$, by continuous connectives. A partial evaluation
type records only some of these conditions.
\end{definition}

An evaluation type must be distinguished from the minimal types in Section~4. There, a point of $U$ is the variable and the harmonic data are parameters. In an evaluation type, the harmonic function is the variable and the points of $Ax_0$ are parameters.

\begin{definition}[Finite satisfiability]
\label{def:finite-satisfiability-evaluation-types}
A partial evaluation type $\pi(h)$ over $Ax_0$ is finitely
satisfiable in $H_1^+(U,x_0)$ if, for every finite
$\pi_0\subseteq\pi$ and every $\varepsilon>0$, there is
$h_{\pi_0,\varepsilon}\in H_1^+(U,x_0)$ such that
$\varphi(h_{\pi_0,\varepsilon};\bar a)\le\varepsilon$ for every
condition $\varphi(h;\bar a)=0$ in $\pi_0$.
\end{definition}

\begin{theorem}[Harnack realization of evaluation types]
\label{thm:harnack-realization-evaluation-types}
Every partial evaluation type over $Ax_0$ that is finitely
satisfiable in $H_1^+(U,x_0)$ is realized by a member of
$H_1^+(U,x_0)$.
\end{theorem}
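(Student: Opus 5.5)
The plan is a compactness argument resting on Lemma~\ref{lem:compact-convex-harmonic-profiles}, which shows that $H_1^+(U,x_0)$ is compact in the compact-open topology. Each condition of an evaluation type becomes a closed subset of this compact space, and finite satisfiability supplies the finite intersection property.

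First I fix the shape of a condition. A condition in $\pi(h)$ has the form $\varphi(h;\bar a)=0$, where $\varphi$ is a continuous connective applied to finitely many evaluations $\operatorname{Ev}(h;a_1),\ldots,\operatorname{Ev}(h;a_k)$ with $a_i\in Ax_0$. For $h\in H_1^+(U,x_0)$, Harnack's inequality bounds $h(a_i)$ in a fixed interval $[0,C_{a_i}]$, because each $a_i$ lies in a compact subset of $U$ together with $x_0$. On this range the connective is uniformly continuous.

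Point evaluation at $a\in U$ is continuous for compact-open convergence. Hence $h\mapsto\varphi(h;\bar a)$ is a continuous real function on $H_1^+(U,x_0)$. For each condition $\sigma$ and each $\varepsilon>0$, set
$$F_{\sigma,\varepsilon}=\{h\in H_1^+(U,x_0):\varphi(h;\bar a)\le\varepsilon\}.$$
This set is closed in $H_1^+(U,x_0)$.

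Next I check the finite intersection property. Take finitely many sets $F_{\sigma_1,\varepsilon_1},\ldots,F_{\sigma_n,\varepsilon_n}$ and let $\varepsilon=\min_i\varepsilon_i$. Definition~\ref{def:finite-satisfiability-evaluation-types}, applied to $\pi_0=\{\sigma_1,\ldots,\sigma_n\}$ and $\varepsilon$, gives a function $h_{\pi_0,\varepsilon}$ lying in all of them. Compactness from Lemma~\ref{lem:compact-convex-harmonic-profiles} then yields some $h$ in the whole intersection $\bigcap_{\sigma,\varepsilon}F_{\sigma,\varepsilon}$.

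This $h$ satisfies $\varphi(h;\bar a)\le\varepsilon$ for every $\varepsilon>0$, so $\varphi(h;\bar a)=0$ for each condition $\sigma$. Thus $h$ realizes $\pi$. Since $h$ is positive harmonic with $h(x_0)=1$, it is a member of $H_1^+(U,x_0)$, as required. If $\pi$ contains two-sided or strict conditions, I handle them the same way after rewriting them in the form $\psi=0$, using $\dotminus$ on rational constants. No nets are needed beyond the compactness already established.

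The main obstacle is justifying that the conditions are continuous on $H_1^+(U,x_0)$ uniformly enough to give closed sets. Continuity of each single evaluation in the compact-open topology handles this. The unboundedness of the values is controlled by the Harnack bounds, and the connectives are applied only after that restriction.

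A subtler point is whether $\pi$ may mention the normalization. A condition such as $\operatorname{Ev}(h;x_0)=1$ is harmless, since it holds everywhere on $H_1^+(U,x_0)$.

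If instead one wants the realization inside a model rather than inside $H_1^+(U,x_0)$, I will note that $h$ is an actual element of the structure, since $H^+(U)\subseteq H(U)$. So no saturation argument in the style of Corollary~\ref{thm:saturated-Martin-realization} is required.
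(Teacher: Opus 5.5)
Your argument is correct and is essentially the paper's proof. Both rest on the compactness of $H_1^+(U,x_0)$ in the compact-open topology, which comes from Harnack compactness. You phrase it as the finite intersection property for the closed sets $F_{\sigma,\varepsilon}$, citing Lemma~\ref{lem:compact-convex-harmonic-profiles}, whereas the paper extracts a locally uniformly convergent subnet from the directed net $(h_{\pi_0,\varepsilon})$.

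One caution about your aside on strict conditions. A condition $\varphi<r$ defines an open set, so in general it cannot be rewritten as $\psi=0$, and this compactness argument does not preserve it. For example, $\{\operatorname{Ev}(h;a)<r\}\cup\{\operatorname{Ev}(h;a)\ge r-1/n:n\ge1\}$ can be finitely satisfiable without being realized. Nothing is lost, however, because the theorem only concerns conditions of the form $\varphi(h;\bar a)=0$.
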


\begin{proof}
Let $\mathcal I$ be the directed set of pairs
$(\pi_0,\varepsilon)$, where $\pi_0\subseteq\pi$ is finite and
$\varepsilon>0$. Declare
$(\pi_0,\varepsilon)\preceq(\pi_1,\delta)$ when
$\pi_0\subseteq\pi_1$ and $\delta\le\varepsilon$. By finite
satisfiability, choose $h_{\pi_0,\varepsilon}\in
H_1^+(U,x_0)$ for each index so that every condition in $\pi_0$ has
value at most $\varepsilon$.

On each compact subset of $U$, Harnack's inequality
bounds the net uniformly because every member is positive harmonic
and has value one at $x_0$. The Harnack compactness theorem therefore
gives a subnet converging locally uniformly to a nonnegative harmonic
function $h_\pi$. Evaluation at $x_0$ is continuous for locally
uniform convergence, so $h_\pi(x_0)=1$ and
$h_\pi\in H_1^+(U,x_0)$.

Fix a condition $\varphi(h;\bar a)=0$ in $\pi$ and $\eta>0$.
Eventually the directed net contains this condition and has error at
most $\eta$. The same estimate holds along the convergent subnet.
Since $\varphi$ is a continuous combination of finitely many
evaluations, local uniform convergence gives
$\varphi(h_\pi;\bar a)\le\eta$. Letting $\eta$ tend to zero shows
that $h_\pi$ realizes $\pi$. This uses Harnack compactness as stated in \cite[Theorem~1]{LW1965axiomatic}.
\end{proof}

For the remainder of this section we make the additional local
assumption that $U$ has a countable compact exhaustion
$C_0\Subset C_1\Subset\cdots$, with $x_0\in C_0$ and
$\bigcup_{n<\omega}C_n=U$. For each $n$, let $c_n\ge1$ be a Harnack
constant for $C_n$ relative to $x_0$, so $h(x)\le c_nh(x_0)$ for
$x\in C_n$ and every positive harmonic $h$ on $U$. If
$h\in H_1^+(U,x_0)$, put $h_n=h|_{C_n}$ and define
$\theta_n(h;x)=h_n(x)/c_n$ for $x\in C_n$. Thus
$0\le\theta_n\le1$.

Pass to a fixed local predicate expansion $\mathcal L_{\mathrm{Har}}(U,x_0)$
of the $Ax_0$-definitional expansion which names the compact loci $C_n$
as metric sorts and names each bounded uniformly continuous predicate
$\theta_n$ on $H_1^+(U,x_0)\times C_n$. %Before this expansion, $\theta_n$ is merely the displayed external function; after the expansion it is an ordinary predicate symbol. No definability of $C_n$ or $\theta_n$ in the original language is being asserted. 
Put
$\lambda_U=\max\{|\mathcal L|,|A|,\aleph_0\}$, so
$|\mathcal L_{\mathrm{Har}}(U,x_0)|\le\lambda_U$.
Recall the local stability theorem for an ordinary bounded
formula $\delta(u;v)$ in a fixed continuous language. For a structure $\mathfrak M=(M,\sigma_M)$ satisfying a theory $T$,
the local type space $S_\delta(M)$ carries the uniform metric
$d_\delta(p,q)=\sup_{b\in M}|\delta(p;b)-\delta(q;b)|$. We use
$\|\mathfrak M\|=\max\{\operatorname{dens}(M),|T|,\aleph_0\}$.

\begin{definition}[Good definition]
Let $p\in S_\delta(M)$. An $M$-definable predicate
$d_p\delta(v)$ is a definition of $p$ if
$d_p\delta(b)=\delta(p;b)$ for every $b\in M$. It is a good
definition if, for every elementary extension $\mathfrak N=(N,\sigma_N)\succeq\mathfrak M$, the
prescription $\delta(x;b)=d_p\delta(b)$, with $b\in N$, is a
consistent complete $\delta$-type over $N$ extending $p$. %Thus the	qualifier good means that the same definable predicate gives compatible extensions over all larger parameter sets.
\end{definition}

\begin{lemma}[Equivalent local stability criteria]
\label{lem:equivalent-local-stability-criteria}
For a bounded formula $\delta(u;v)$, the following conditions are
equivalent.
\begin{enumerate}
	\item The formula has no order property. Explicitly, there are no
	$\varepsilon>0$, $r\in\mathbb R$, and sequences $(a_i)_{i<\omega}$ and
	$(b_i)_{i<\omega}$ such that, whenever $i<j<\omega$, one has
	$\delta(a_i;b_j)\le r$ and
	$\delta(a_j;b_i)\ge r+\varepsilon$.
	
	\item For every infinite $\kappa\ge|T|$ and every $M\models T$ with
	$\operatorname{dens}(M)\le\kappa$, one has
	$\operatorname{dens}(S_\delta(M))\le\kappa$.
	
	\item The implication in (2) holds for some infinite cardinal $\kappa$ satisfying $\kappa^{|T|}=\kappa$.
	
	\item Every $p\in S_\delta(M)$ has a unique good definition
	$d_p\delta$.
	
	\item The double-limit criterion holds: for sequences $(a_i)_{i<\omega}$
	and $(b_j)_{j<\omega}$ for which both iterated limits exist, taking the
	limit first in $j$ and then in $i$ gives the same value as taking
	the limits in the reverse order.
	
	\item The family of instances is relatively weakly compact in $C(S_v(M))$.
\end{enumerate}
\end{lemma}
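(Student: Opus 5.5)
The plan is to prove the six conditions equivalent by a cycle of implications, following the local stability theory of Ben Yaacov and Usvyatsov \cite{Yaacov2008CONTINUOUSFO}. Functional analysis will handle the equivalence (5)$\Leftrightarrow$(6), and a definability/counting argument will handle (1)--(4). Throughout, $\delta$ is bounded, say with values in $[0,1]$. By compactness of the type space we will freely pass between $\omega$-indexed sequences in some model and indiscernible sequences in a saturated extension.

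First, (1)$\Leftrightarrow$(5). If the double-limit criterion fails, there are $(a_i),(b_j)$ with $\lim_i\lim_j\delta(a_i;b_j)=\alpha\neq\beta=\lim_j\lim_i\delta(a_i;b_j)$. By symmetry, assume $\alpha<\beta$. Choose $r$ and $\varepsilon$ with $\alpha<r<r+\varepsilon<\beta$. Passing to subsequences by a Ramsey-type diagonal extraction gives $\delta(a_i;b_j)\le r$ for $i<j$ and $\delta(a_j;b_i)\ge r+\varepsilon$ for $i<j$, which is the order property. Conversely, from an order-property configuration, take cluster values of the iterated limits along subsequences, again by compactness of $[0,1]$ and diagonal extraction. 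The first iterated limit is then at most $r$ and the second at least $r+\varepsilon$, so (5) fails.

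Next, (5)$\Leftrightarrow$(6) by Grothendieck's double-limit criterion. Consider the instances $\delta(\cdot;b)$, $b\in M$, as continuous functions on the compact space $S_u(M)$ (the type space in the variable $u$). They form a uniformly bounded family. Grothendieck's theorem says this family is relatively weakly compact exactly when the double limits $\lim_i\lim_j f_j(x_i)$ and $\lim_j\lim_i f_j(x_i)$ agree whenever both exist, for $x_i$ ranging over a dense subset. Realized types are dense in $S_u(M)$, so the criterion on $S_u(M)$ reduces to the criterion on elements of models, which is (5). The two placements $S_v$ and $S_u$ are symmetric here, since (1) is self-dual under swapping the roles of $u$ and $v$.

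For (1)$\Rightarrow$(4), I plan to show that every $p\in S_\delta(M)$ is definable. Under (6), the map $b\mapsto\delta(p;b)$ is a pointwise limit of instances $\delta(a_i;\cdot)$ lying in the weak closure of a relatively weakly compact set. Grothendieck's theorem then makes this limit continuous on $S_v(M)$, that is, an $M$-definable predicate $d_p\delta$. For goodness, I would show that the same predicate, evaluated in any $\mathfrak N\succeq\mathfrak M$, is finitely consistent with $\delta(x;b)=d_p\delta(b)$. This follows because the approximating configurations transfer by elementarity, and the absence of the order property persists in $\mathfrak N$. Uniqueness follows from the fact that a good definition is determined on a model by its values on $M$, by continuity on $S_v(M)$.

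For (4)$\Rightarrow$(2), note that a definable predicate over $M$ is a uniform limit of formulas with parameters from a countable subset. The number of such predicates up to uniform distance $\varepsilon$ is therefore bounded by $\operatorname{dens}(M)+|T|$. Since $p\mapsto d_p\delta$ is isometric for $d_\delta$, this gives $\operatorname{dens}(S_\delta(M))\le\kappa$. The implication (2)$\Rightarrow$(3) is trivial. For (3)$\Rightarrow$(1), argue by contraposition: from the order property and compactness, build for a dense linear order $I$ of size $\kappa$ with $2^{\kappa}$ Dedekind cuts ($\kappa^{|T|}=\kappa$ lets one choose a model of density $\kappa$ containing the $b_i$). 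This gives $2^\kappa$ pairwise $\varepsilon$-separated $\delta$-types over a model of density $\kappa$, contradicting (3).

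I expect the main obstacle to be the goodness clause in (4) and the cut-counting in (3)$\Rightarrow$(1). The first requires uniform control over elementary extensions, not just over $M$. The second requires a careful choice of $\kappa$ and of a linear order with many cuts compatible with $\kappa^{|T|}=\kappa$. For both, I would follow the corresponding arguments of \cite{Yaacov2008CONTINUOUSFO} essentially verbatim, since nothing specific to the harmonic language intervenes.
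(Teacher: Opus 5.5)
Your proposal is correct and follows essentially the paper's route. The paper only cites \cite[Proposition~7.7]{Yaacov2008CONTINUOUSFO} for (1)--(4), and Grothendieck's double-limit theorem together with the weak-compactness-to-definability step of \cite[Theorem~5]{BenYaacov2014} for (5)--(6); your cycle unpacks exactly these references, with two small repairs needed. First, in (3)$\Rightarrow$(1) you cannot count on an order of size $\kappa$ with $2^\kappa$ cuts for the $\kappa$ that (3) hands you, but more than $\kappa$ cuts suffice and always exist: take $2^{<\mu}$ ordered lexicographically, with $\mu$ least such that $2^\mu>\kappa$. Second, goodness in (4) is cleanest by noting that your $d_p\delta$ is a weak, hence pointwise, cluster point of the instances $\delta(a;\cdot)$, $a\in M$, on all of $S_v(M)$, so the prescribed extension to any $\mathfrak N\succeq\mathfrak M$ is finitely satisfiable in $M$.
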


\begin{proof}
The equivalence of the order-property, type-density, and definability formulations is the continuous version of the local stability theorem \cite[Proposition~7.7]{Yaacov2008CONTINUOUSFO}. Grothendieck's
double-limit theorem identifies the double-limit and relative weak-compactness
conditions. Mazur's lemma is then applied after weak compactness, as explained in \cite[Theorem~5]{BenYaacov2014}.
\end{proof}

\begin{definition}[Canonical base and stationary type]
For a parameter set $B$, the definable closure
$\operatorname{dcl}^{\mathrm{eq}}(B)$ consists of the imaginaries
fixed by every automorphism fixing $B$, while the algebraic closure
$\operatorname{acl}^{\mathrm{eq}}(B)$ consists of the imaginaries
whose orbit over $B$ is compact. A set $C$ is algebraically closed
when $C=\operatorname{acl}^{\mathrm{eq}}(C)$.

For a stable formula $\delta$, the canonical base
$\operatorname{Cb}_\delta(p)$ is the imaginary canonical parameter
of the good definition $d_p\delta$. For a stable family $\Delta$,
put
$\operatorname{Cb}_\Delta(p)=
\operatorname{dcl}^{\mathrm{eq}}
\{\operatorname{Cb}_\delta(p):\delta\in\Delta\}$.
A $\Delta$-type over an algebraically closed set $C$ is stationary
if, for every $B\supseteq C$, it has exactly one extension whose good
definitions remain over $C$. When the family is clear, a realization
of this extension is denoted by $a\forkindep[C]B$.
\end{definition}

Now put $E=\operatorname{acl}^{\mathrm{eq}}(Ax_0)$.

\begin{theorem}[Stability of normalized harmonic evaluation]
\label{thm:theta-n-stable}
For every $n$, the $\mathcal L_{\mathrm{Har}}(U,x_0)$-formula $\theta_n(h;x)$ is stable.
\end{theorem}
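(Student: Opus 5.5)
We plan to verify the double-limit criterion (5) of Lemma~\ref{lem:equivalent-local-stability-criteria} for $\theta_n(h;x)$, with harmonic variable $h$ ranging over $H_1^+(U,x_0)$ and point variable $x$ ranging over $C_n$. By that lemma, (5) is equivalent to the absence of the order property, hence to stability. We read the statement in $\mathcal L_{\mathrm{Har}}(U,x_0)$ so that $\theta_n$ is a bounded predicate with values in $[0,1]$.

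Fix sequences $(h_i)_{i<\omega}$ in $H_1^+(U,x_0)$ and $(x_j)_{j<\omega}$ in $C_n$ for which both iterated limits
$$L_1=\lim_i\lim_j\theta_n(h_i;x_j),\qquad L_2=\lim_j\lim_i\theta_n(h_i;x_j)$$
exist. We first use compactness on both sides. By Lemma~\ref{lem:compact-convex-harmonic-profiles} (Harnack compactness), $H_1^+(U,x_0)$ is compact in the compact-open topology, so a subsequence $h_{i_k}$ converges uniformly on $C_n$ to some $h\in H_1^+(U,x_0)$. Since $C_n$ is a compact metric sort, a subsequence $x_{j_l}$ converges to some $x\in C_n$. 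Passing to subsequences does not change iterated limits that already exist. We may therefore assume $h_i\to h$ uniformly on $C_n$ and $x_j\to x$.

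Next we compute the two iterated limits. The family $\{h|_{C_n}:h\in H_1^+(U,x_0)\}$ is equicontinuous on $C_n$. Indeed, its image in $C(C_n)$ is compact in the sup norm, and Arzelà--Ascoli applies in the reverse direction. Hence $\lim_j\theta_n(h_i;x_j)=h_i(x)/c_n$ for each $i$, and so $L_1=h(x)/c_n$. In the other order, $\lim_i\theta_n(h_i;x_j)=h(x_j)/c_n$ for each $j$, and continuity of $h$ gives $L_2=h(x)/c_n$. Thus $L_1=L_2$, the double-limit criterion holds, and $\theta_n$ is stable. Equivalently, the order property would require a gap of at least $\varepsilon$ that survives passage to these convergent subsequences, and that is impossible.

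The main obstacle is the first limit, which requires a joint limit rather than a separate one in each variable. The argument needs equicontinuity of the whole normalized family on $C_n$, which is what we extract from compactness of $H_1^+(U,x_0)$ in $C(C_n)$. A second point to check is that the criterion must hold in every model of $T$, not only in the standard structure. Here we use the fact that $C_n$ and $\theta_n$ are named with fixed uniform continuity moduli, which makes the double-limit property a property of the intended compact structure. We would argue via Theorem~\ref{thm:harnack-realization-evaluation-types} that realizations of evaluation types in elementary extensions are again members of $H_1^+(U,x_0)$, so the same computation applies there.
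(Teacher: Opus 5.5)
Your argument is correct. It rests on the same analytic input as the paper's proof, namely Harnack compactness of the normalized family $\{h|_{C_n}:h\in H_1^+(U,x_0)\}$ in $C(C_n)$; the two proofs differ only in how stability is read off.

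The paper observes that norm-relative compactness implies relative weak compactness and invokes criterion~(6) of Lemma~\ref{lem:equivalent-local-stability-criteria}. You instead extract $h_i\to h$ uniformly on $C_n$ and $x_j\to x$, and check criterion~(5) directly by computing both iterated limits as $h(x)/c_n$. Your route is more elementary and shows exactly what is used.

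Your diagnosis of the ``main obstacle'' is off, however. No joint limit and no equicontinuity are needed:
\begin{itemize}
\item $\lim_j h_i(x_j)=h_i(x)$ uses only continuity of $h_i$;
\item $\lim_i h_i(x)=h(x)$ and $\lim_i h_i(x_j)=h(x_j)$ use only pointwise convergence;
\item $\lim_j h(x_j)=h(x)$ uses only continuity of the limit.
\end{itemize}
The essential point is that every sequence has a subsequence converging pointwise on $C_n$ to a continuous function. This is the Eberlein--Grothendieck content behind criterion~(6).

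Your final paragraph addresses something the paper's proof leaves implicit: the order property must be excluded in every model of $T$, not only in the standard structure. Phrase it correctly, though. An element $h'$ of an elementary extension is not itself a member of $H_1^+(U,x_0)$. Rather:
\begin{itemize}
\item $C_n$ is a compact sort, so it does not grow in elementary extensions;
\item by elementarity, the $\theta_n$-type of $h'$ over $C_n$ is finitely satisfiable in $H_1^+(U,x_0)$;
\item the Harnack-realization argument of Theorem~\ref{thm:harnack-realization-evaluation-types} then yields a standard $h$ with $\theta_n(h';\cdot)=\theta_n(h;\cdot)$ on $C_n$.
\end{itemize}
Your computation then applies verbatim to sequences taken in the extension.
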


\begin{proof}
The restrictions of $H_1^+(U,x_0)$ to $C_n$ form a relatively
compact subset of $C(C_n)$ in the uniform norm. Harnack's inequality
gives local boundedness and equicontinuity, while Harnack compactness
identifies every convergent subnet with the restriction of a member
of $H_1^+(U,x_0)$. Division by $c_n$ preserves relative
compactness. Relative compactness in norm implies relative weak
compactness, so condition~(6) of
Lemma~\ref{lem:equivalent-local-stability-criteria} applies.
\end{proof}

\begin{corollary}[Nonforking for $\theta_n$]
\label{cor:theta-n-nonforking}
Let $p=\operatorname{tp}_{\theta_n}(h/E)$, and let $q$ be an
extension of $p$ to $B\supseteq E$. Then $q$ is nonforking over $E$
if and only if its good definition is $E$-definable. Equivalently,
$\operatorname{Cb}_{\theta_n}(q)$ belongs to
$\operatorname{dcl}^{\mathrm{eq}}(E)$. This extension is unique.
\end{corollary}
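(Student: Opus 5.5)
The plan is to derive the corollary from the local stability of $\theta_n$ (Theorem~\ref{thm:theta-n-stable}) together with the definability criterion (4) of Lemma~\ref{lem:equivalent-local-stability-criteria}. This is the continuous analogue of the classical fact that, for a stable formula, nonforking extensions over an algebraically closed set are exactly the definable ones, and they are unique. The argument runs in three steps: definability, the characterization of nonforking, and uniqueness via stationarity.

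\emph{Step 1: definability of $p$.} Since $\theta_n$ is stable, each type in $S_{\theta_n}(M)$ has a unique good definition $d_p\theta_n$. Applying this in the monster model to $p=\operatorname{tp}_{\theta_n}(h/E)$, I would observe that any automorphism fixing $E$ permutes the good definitions of extensions of $p$. By the standard argument of \cite{Yaacov2008CONTINUOUSFO} these form a bounded, in fact compact, orbit. Hence the canonical parameters $\operatorname{Cb}_{\theta_n}(q)$ of the definitions of global nonforking extensions lie in $\operatorname{acl}^{\mathrm{eq}}(E)=E$.

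\emph{Step 2: the equivalence.} For the direction ``$E$-definable implies nonforking'', suppose $q$ is given by a good definition over $E$. Then $q$ is $\operatorname{Aut}(\mathfrak C/E)$-invariant, and invariant types do not divide, hence do not fork. For the converse, suppose $q$ does not fork over $E$. I would use the local stability theorem: in a stable formula, forking coincides with failure of definability over $\operatorname{acl}^{\mathrm{eq}}$ of the base. Concretely, extend $q$ to a global nonforking $\theta_n$-type $q'$. Its definition $d_{q'}\theta_n$ has only boundedly many $E$-conjugates, since otherwise the conjugates, together with an indiscernible sequence argument, would yield dividing. Therefore $\operatorname{Cb}_{\theta_n}(q')\in\operatorname{acl}^{\mathrm{eq}}(E)=E$, and the restriction of this definition to $B$ defines $q$. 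This is the same statement as $\operatorname{Cb}_{\theta_n}(q)\in\operatorname{dcl}^{\mathrm{eq}}(E)$, because $E$ is algebraically closed.

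\emph{Step 3: uniqueness.} Take two $E$-definable extensions $q_1,q_2$ of $p$ to $B$. Their definitions $d_{q_1}\theta_n$ and $d_{q_2}\theta_n$ are $E$-definable predicates that agree on $E$, since both restrict to $p$. The main obstacle is showing that they agree everywhere. My first approach is the double-limit criterion (5), equivalently relative weak compactness in $C(S_v(M))$ from criterion (6). In a stable formula, the good definition of $p$ over a model is uniquely determined and is a uniform limit of continuous combinations of instances $\theta_n(a_i;v)$ with $a_i$ realizing $p$. Since $E$ is only $\operatorname{acl}^{\mathrm{eq}}$-closed and not a model, I would run this through the open mapping/stationarity argument. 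The $E$-conjugates of a global nonforking extension are finitely many in the compact sense, and all are fixed by $\operatorname{Aut}(\mathfrak C/E)$ because their canonical bases lie in $E$. Hence there is only one such extension, which means $p$ is stationary. Restricting to $B$ then gives $q_1=q_2$.

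I expect the hardest point to be this transfer of Ben Yaacov–Usvyatsov uniqueness from models to the algebraically closed set $E$ in the continuous setting. I would rely on the compact-orbit definition of $\operatorname{acl}^{\mathrm{eq}}$ given above to handle it.
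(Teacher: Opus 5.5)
Your overall route is the paper's: the paper proves this corollary in one line, by applying the nonforking characterization for stable formulas \cite[Corollary~8.10]{Yaacov2008CONTINUOUSFO} to $\theta_n$, which is stable by Theorem~\ref{thm:theta-n-stable}. Steps~1 and~2 are acceptable sketches of that cited theory. One slip in Step~1: the global extensions of $p$ do not in general form a bounded orbit; only the nonforking ones do, which is what Step~2 actually argues.

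The real gap is in Step~3, the uniqueness clause. Knowing that every nonforking extension $q'$ has $\operatorname{Cb}_{\theta_n}(q')\in E$ tells you only that each such $q'$ is fixed by $\operatorname{Aut}(\mathfrak C/E)$, i.e.\ its own orbit is a singleton. It does not rule out two distinct $E$-definable extensions $q_1\neq q_2$ of $p$, each of them invariant. To conclude ``hence there is only one'' you need $\operatorname{Aut}(\mathfrak C/E)$ to act transitively on the nonforking extensions of $p$. That transitivity is normally derived from stationarity over $\operatorname{acl}^{\mathrm{eq}}$, and since $E$ is already algebraically closed, that is exactly the claim being proved. So the step is circular. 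Criteria (5) and (6) of Lemma~\ref{lem:equivalent-local-stability-criteria} do not rescue it either: they produce definitions, not uniqueness over a base that is not a model.

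The missing idea is the symmetry of good definitions for a stable formula, the continuous Harrington lemma in \cite{Yaacov2008CONTINUOUSFO}. Put $\tilde\theta_n(v;u)=\theta_n(u;v)$. For a definable global $\theta_n$-type $s$ and a definable global $\tilde\theta_n$-type $r$, the value of $d_s\theta_n$ at $r$ equals the value of $d_r\tilde\theta_n$ at $s$. Uniqueness then follows directly.
\begin{itemize}
\item Let $q_1,q_2$ extend $p$ with $E$-definable good definitions, regarded as global types via those definitions, and fix $b\in B$.
\item Since $\tilde\theta_n$ is also stable, $\operatorname{tp}_{\tilde\theta_n}(b/E)$ has a global extension $r$ with an $E$-definable definition.
\item Because $d_{q_i}\theta_n$ is an $E$-definable $\tilde\theta_n$-predicate, $d_{q_i}\theta_n(b)$ equals its value at $r$.
\item By symmetry, this equals the value at $q_i$ of $d_r\tilde\theta_n$. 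That is an $E$-definable $\theta_n$-predicate, so its value is already decided by $q_i|_E=p$.
\end{itemize}
Hence $d_{q_1}\theta_n(b)=d_{q_2}\theta_n(b)$ for every $b\in B$, so $q_1=q_2$. Either put this argument in place of the orbit argument, or simply cite the unique $\operatorname{acl}^{\mathrm{eq}}$-definable extension result of \cite{Yaacov2008CONTINUOUSFO}. The paper does the latter here and again in the proof of Theorem~\ref{thm:glued-local-independence-calculus}.
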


This is the standard nonforking characterization for a stable formula \cite[Corollary~8.10]{Yaacov2008CONTINUOUSFO}.

\subsection{Gluing local stable formulas} 
\label{subsec:gluing-local-stable-formulas}

In this subsection, we apply the gluing technique in \cite{Yaacov2008CONTINUOUSFO} to extend the meaning of the
unique $\theta_n$-nonforking extension $h\forkindep[E]B$ described in Corollary~\ref{cor:theta-n-nonforking}.

\begin{lemma}[Finite continuous gluing]
\label{lem:finite-continuous-gluing}
For $i<m$, let $\delta_i(u;v)$ be a stable bounded formula on the
same loci. If $f:[0,1]^m\to[0,1]$ is continuous, then the formula obtained by applying $f$ pointwise to the tuple $(\delta_i)_{i<m}$ is stable.
\end{lemma}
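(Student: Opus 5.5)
We prove the lemma through the order-property criterion, condition~(1) of Lemma~\ref{lem:equivalent-local-stability-criteria}, combined with Ramsey extraction. Put $\Phi(u;v)=f(\delta_0(u;v),\ldots,\delta_{m-1}(u;v))$. Suppose, towards a contradiction, that $\Phi$ has the order property. Then there are $\varepsilon>0$, $r$, and sequences $(a_i)_{i<\omega}$, $(b_j)_{j<\omega}$ such that, for all $i<j$,
$$\Phi(a_i;b_j)\le r \quad\text{and}\quad \Phi(a_j;b_i)\ge r+\varepsilon.$$
Each $\delta_k$ takes values in the compact interval $[0,1]$. Apply Ramsey's theorem for pairs repeatedly to finite colourings by small grid cells, and then diagonalize. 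This passes to a subsequence along which, for every $k<m$, both limits
$$\alpha_k=\lim_{i<j}\delta_k(a_i;b_j), \qquad \beta_k=\lim_{i<j}\delta_k(a_j;b_i)$$
exist. Here the limits are taken over pairs $i<j$ tending to infinity.

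Continuity of $f$ gives $f(\bar\alpha)\le r$ and $f(\bar\beta)\ge r+\varepsilon$. Hence $\bar\alpha\ne\bar\beta$, so some coordinate $k$ satisfies $\alpha_k\ne\beta_k$. Say $\alpha_k<\beta_k$; the other case is symmetric after swapping the roles of the inequalities. Put $\eta=(\beta_k-\alpha_k)/3$. Passing to a further tail, we obtain for all $i<j$
$$\delta_k(a_i;b_j)\le\alpha_k+\eta \quad\text{and}\quad \delta_k(a_j;b_i)\ge(\alpha_k+\eta)+\eta.$$
This is exactly the order property for $\delta_k$, with threshold $\alpha_k+\eta$ and gap $\eta$. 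It contradicts the stability of $\delta_k$, so $\Phi$ is stable.

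An alternative route uses the double-limit criterion~(5). Choose sequences along which both iterated limits of $\Phi$ exist. By a diagonal subsequence argument, arrange that the iterated limits of every $\delta_k$ exist as well. Stability of each $\delta_k$ makes its two iterated limits equal, and continuity of $f$ transfers the equality to $\Phi$. The subtlety here is that the hypothesis only concerns sequences along which the iterated limits of $\Phi$ exist. Passing to subsequences preserves existence and values of those limits, so the argument is sound, but it needs care. This is why the Ramsey version above is the primary argument.

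The main obstacle is the uniform extraction of subsequences: all $m$ coordinates must be handled simultaneously, on both "sides" $i<j$ and $j<i$. Ramsey's theorem applied to finitely many $\varepsilon$-grid colourings of $[0,1]^{2m}$, followed by a diagonal argument, handles this. The remaining steps, including the observation that $\Phi$ is again a bounded formula on the same loci, are routine.
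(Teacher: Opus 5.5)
Your proof is correct, and it takes a different route from the paper only because the paper gives no argument at all: it simply cites \cite[Lemma~8.1]{Yaacov2008CONTINUOUSFO}. You instead prove the lemma directly from criterion~(1) of Lemma~\ref{lem:equivalent-local-stability-criteria}. Ramsey extraction over grid colourings of $[0,1]^{2m}$ makes the $2m$ limits $\alpha_k,\beta_k$ exist. Continuity of $f$ then turns the gap for $\Phi$ into $\bar\alpha\neq\bar\beta$, and a coordinate with $\alpha_k\neq\beta_k$ gives an order pattern for $\delta_k$. The citation buys brevity and places the lemma inside the local-stability framework that the paper relies on later. Your argument buys a self-contained proof that uses only the order property, Ramsey's theorem and continuity of $f$, with no definability or type-counting machinery.

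One step is too quick: the case $\alpha_k>\beta_k$ is not obtained just by ``swapping the roles of the inequalities''. In that case you have $\delta_k(a_i;b_j)$ high and $\delta_k(a_j;b_i)$ low for $i<j$. This reversed pattern cannot be reindexed into the forward pattern required in~(1), because $\omega$ is not anti-isomorphic to itself. There are two ways to fix it. The first is compactness: reverse initial segments of length $n$ to get forward patterns of every finite length, then realize an $\omega$-indexed forward pattern in an elementary extension. The second handles both cases at once. Make one further diagonal extraction so that the inner limits exist, and apply the symmetric double-limit criterion~(5) to your Ramsey sequence. The two iterated limits of $\delta_k$ are then $\alpha_k$ and $\beta_k$, so stability forces $\bar\alpha=\bar\beta$, which contradicts $f(\bar\alpha)\le r<r+\varepsilon\le f(\bar\beta)$.
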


This is \cite[Lemma 8.1]{Yaacov2008CONTINUOUSFO}.

To glue all compact levels, let
$Y_U=\{*\}\sqcup\bigsqcup_{n<\omega}(\{n\}\times C_n)$ be the
one-point compactification of the topological sum of $(\{n\}\times C_n)$, and
write $\iota_n:C_n\to Y_U$ for the inclusion of the $n$-th summand.
Extend $\mathcal L_{\mathrm{Har}}(U,x_0)$ to a language
$\mathcal L_{\mathrm{gl}}$ by naming the sort $Y_U$, the point $*$,
the maps $\iota_n$, and one predicate $\Theta_U$ whose prescribed
interpretation is
$$
\Theta_U(h;*)=0,\quad
\Theta_U(h;\iota_n(x))=2^{-n}\theta_n(h;x)
=\frac{2^{-n}}{c_n}h(x).
$$
Because $0\le\theta_n\le1$, the factors $2^{-n}$ make this predicate
continuous at $*$. Thus $\Theta_U$ is a genuine predicate symbol in the specified expansion $\mathcal L_{\mathrm{gl}}$.

Fix a countable collection $\mathcal C_0$ of rational piecewise-linear
continuous connectives that contains $\min$, $\max$, rational scalar
multiplication with truncation, and is uniformly dense in all finitary
continuous connectives. Let $\Delta_U$ consist of the formulas obtained
from the single atomic predicate $\Theta_U$ by finitely many
applications of connectives from $\mathcal C_0$, variable renaming,
interchange of object and parameter variables, and pullback along one
of the named maps $\iota_n$. The family is controlled in size:
$|\Delta_U|\le\lambda_U$. Moreover,
$\theta_n(h;x)=\min\{2^n\Theta_U(h;\iota_n(x)),1\}$, so the predicates $\theta_n$ need not be added separately.

\begin{theorem}[Glued $\Delta_U$-local independence calculus]
\label{thm:glued-local-independence-calculus}
In the complete local theory of the named $\mathcal L_{\mathrm{gl}}$-structure, every formula in $\Delta_U$ is stable. For
$E\subseteq C\subseteq B$, define $a\forkindep[C]B$ to mean that,
for every $\delta\in\Delta_U$, the good definition of the
$\delta$-fragment of $a$ over $B$ is definable over
$\operatorname{acl}^{\mathrm{eq}}(C)$. This
$\Delta_U$-local relation has the following properties.
\begin{enumerate}
	\item It is invariant under automorphisms, monotone in the right
	parameter set, base monotone, and has finite character.
	
	\item It is symmetric: $a\forkindep[C]b$ if and only if
	$b\forkindep[C]a$.
	
	\item It is transitive: if $C\subseteq B\subseteq D$, then
	$a\forkindep[C]D$ is equivalent to
	$a\forkindep[C]B$ together with $a\forkindep[B]D$.
	
	\item It has existence and extension: every
	$\Delta_U$-type over $C$ has an extension over $B$ whose
	realizations are independent from $B$ over $C$.
	
	\item It has local character. For every $a$ and $B\supseteq E$,
	there is $C\subseteq B$ containing $E$, with
	$\operatorname{dens}(C)\le\lambda_U$, such that
	$a\forkindep[C]B$.
	
	\item It is stationary over algebraically closed bases. If $C$
	is algebraically closed, $a$ and $a'$ have the same
	$\Delta_U$-type over $C$, and both
	$a\forkindep[C]B$ and $a'\forkindep[C]B$, then they have the same
	$\Delta_U$-type over $B$.
\end{enumerate}
\end{theorem}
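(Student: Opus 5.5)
The plan is to first establish stability of every member of $\Delta_U$, and then import the abstract independence calculus for stable families from \cite{Yaacov2008CONTINUOUSFO}, checking the cardinal bound in local character separately.

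For stability, the basic observation is that $\Theta_U(h;y)$, as a function of $h\in H_1^+(U,x_0)$ indexed by $y\in Y_U$, forms a relatively norm-compact family. On each summand $\{n\}\times C_n$ it is $2^{-n}\theta_n$. The restrictions of $H_1^+(U,x_0)$ to each $C_n$ are relatively compact in $C(C_n)$, by the argument of Theorem~\ref{thm:theta-n-stable}. The weights $2^{-n}$ make the tails uniformly small near $*$. Hence the map $h\mapsto\Theta_U(h;\cdot)\in C(Y_U)$ has relatively compact image, by a diagonal $\varepsilon/3$ argument: truncate at level $N$ with $2^{-N}<\varepsilon$ and use compactness on the finitely many remaining levels. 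Symmetrically, the family $y\mapsto\Theta_U(\cdot;y)$ is then relatively compact in the uniform norm on the $h$-side, since a compact set of continuous functions on a compact set gives an equicontinuous transpose. Condition~(6) of Lemma~\ref{lem:equivalent-local-stability-criteria}, or equivalently the double-limit criterion~(5) obtained by interchanging limits for an equicontinuous compact family, shows that $\Theta_U$ is stable.

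Closure of stability under the operations generating $\Delta_U$ is then routine:
\begin{itemize}
\item Lemma~\ref{lem:finite-continuous-gluing} handles the connectives in $\mathcal C_0$.
\item Variable renaming and interchange of object and parameter variables preserve the no-order property, since that property is symmetric.
\item Pullback along $\iota_n$ preserves it, because any order witness for the pullback pushes forward to an order witness for $\Theta_U$.
\end{itemize}
Hence every $\delta\in\Delta_U$ is stable.

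Once stability of the whole family is known, properties (1)--(4) and (6) follow from the general theory of independence for a stable family in \cite[Sections~7--8]{Yaacov2008CONTINUOUSFO}. Invariance, monotonicity, base monotonicity and finite character are immediate from the definition through good definitions, using Lemma~\ref{lem:equivalent-local-stability-criteria}(4) to get uniqueness. Symmetry comes from the symmetry of stable formulas. Because $\Delta_U$ is closed under interchange of variables, the canonical parameters of the good definitions of $\operatorname{tp}_\delta(a/B)$ and of $\operatorname{tp}_{\delta^{\ast}}(b/B)$ are interdefinable over $\operatorname{acl}^{\mathrm{eq}}(C)$. Transitivity, existence, extension and stationarity over algebraically closed bases are the corresponding statements in loc.\ cit., applied uniformly in $\delta$. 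Stationarity is Corollary~\ref{cor:theta-n-nonforking} extended from $\theta_n$ to all of $\Delta_U$.

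The main obstacle is local character with the bound $\operatorname{dens}(C)\le\lambda_U$. For each $\delta\in\Delta_U$, the canonical base $\operatorname{Cb}_\delta(\operatorname{tp}_\delta(a/B))$ is an imaginary lying in $\operatorname{dcl}^{\mathrm{eq}}(B)$. The good definition is a uniform limit of formulas with countably many parameters from $B$, by Lemma~\ref{lem:equivalent-local-stability-criteria}(4) and the density bound (2). I would choose a countable $C_\delta\subseteq B$ over which $d\delta$ is defined. Then set $C=E\cup\bigcup_{\delta\in\Delta_U}C_\delta$. This gives $\operatorname{dens}(C)\le|E|+|\Delta_U|\aleph_0\le\lambda_U$, using $|\Delta_U|\le\lambda_U$ and the fact that $E=\operatorname{acl}^{\mathrm{eq}}(Ax_0)$ has density at most $\lambda_U$. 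The delicate point is the density of $E$: I would bound it by the number of formulas over $Ax_0$, which is at most $\lambda_U$. By construction every good definition is over $C$, so $a\forkindep[C]B$.
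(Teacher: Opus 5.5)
Your stability proof is correct, and it takes a different route from the paper. The paper argues by contradiction from an order pattern for $\Theta_U$ with gap $\varepsilon$. Summands with $2^{-n}<\varepsilon/4$ cannot carry the gap, so the pattern lives on finitely many clopen summands. There $\Theta_U$ is a finite case distinction of the already stable $\theta_n$, which contradicts the finite gluing lemma.

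You instead show directly that $\{\Theta_U(h;\cdot)\}$ is totally bounded in $C(Y_U)$: truncate the tail, use Harnack compactness on the finitely many remaining levels, and apply criterion (6). Your route is more self-contained. It also makes visible that the bound on lengths of order patterns with a given gap is uniform, which is what carries stability to the complete theory. Indeed, since $Y_U$ is a compact sort, $S_v(M)=Y_U$, so (6) is literally what you prove. The paper's route is more modular, since it reuses the stability of each $\theta_n$. Your explicit treatment of variable interchange and of pullback along $\iota_n$ also fills in what the paper's ``repeated application'' of the connective lemma leaves implicit.

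The independence half has a gap at the passage from single formulas to the family. Let $p$ be a $\Delta_U$-type over an algebraically closed base $C$. The single-formula results give, for each $\delta\in\Delta_U$ separately, an extension $q_\delta$ over $B$ of the $\delta$-fragment of $p$, defined by $d_p\delta$. Item (4) needs one $\Delta_U$-type over $B$, that is, a single realization whose $\delta$-fragments are all the $q_\delta$ simultaneously. The joint consistency of $p\cup\bigcup_{\delta}q_\delta$ does not follow from the individual statements, so ``applied uniformly in $\delta$'' leaves (4) unproved.

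The paper supplies exactly this step:
\begin{enumerate}
\item Each finite $\Delta_0\subseteq\Delta_U$ is coded by one continuous combination $\delta_{\Delta_0}$. It is stable by the finite gluing lemma, which is why $\mathcal C_0$ must contain finite maxima and rational rescalings.
\item The unique definable extension of $\delta_{\Delta_0}$ over the algebraically closed base agrees with the $q_\delta$ for $\delta\in\Delta_0$, by uniqueness.
\item Hence the extensions for different finite subfamilies agree on overlaps, and compactness glues them into one $\Delta_U$-type.
\end{enumerate}
Stationarity, by contrast, does go through $\delta$ by $\delta$ as you say.

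Your justification of symmetry is incorrect as stated. The canonical parameters of $\operatorname{tp}_\delta(a/Cb)$ and $\operatorname{tp}_{\delta^{*}}(b/Ca)$ are not interdefinable over $\operatorname{acl}^{\mathrm{eq}}(C)$ in general. Take $\delta$ the incidence relation of the free pseudoplane, $a$ a point on a line $b$, and $C=\emptyset$: the two canonical parameters are $b$ and $a$. In the only case you need, the claim is equivalent to symmetry itself, so it cannot serve as the reason. What is needed is the symmetry lemma for a single stable formula (the paper applies it to each $\delta_{\Delta_0}$). Closure of $\Delta_U$ under interchange of variables then keeps both directions inside the family.

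Two smaller points on local character. First, write $\operatorname{dens}(E)$ rather than $|E|$: $E$ contains the metric closure of $Ax_0$ and can have cardinality exceeding $\lambda_U$. Second, over an arbitrary $B$ the canonical base lies in $\operatorname{acl}^{\mathrm{eq}}(B)$, not $\operatorname{dcl}^{\mathrm{eq}}(B)$. This is harmless here, because independence is defined through $\operatorname{acl}^{\mathrm{eq}}(C)$.
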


\begin{proof}
We first verify stability of the language used in the reduction. If
$\Theta_U$ had an order pattern with gap $\varepsilon$, choose $N$
so that $2^{-N}<\varepsilon/4$. On every summand with $n>N$, the
oscillation of $\Theta_U$ is below $\varepsilon/4$, so two tail
parameters cannot witness the required gap. After discarding
finitely many indices, the order pattern would therefore be
supported on finitely many summands. On that finite union,
the summands are clopen and $\Theta_U$ is obtained by a finite
case distinction from the stable $\theta_n$'s. This is a finite
continuous gluing, contradicting
Lemma~\ref{lem:finite-continuous-gluing}. Hence $\Theta_U$ is stable,
and repeated application of that lemma shows that every formula in
$\Delta_U$ is stable.

All remaining assertions are local to $\Delta_U$. They do not define forking for the ambient Brelot theory. Their proof uses a finite stable reduction. Given a finite
subfamily $\Delta_0\subseteq\Delta_U$, Section~8.1 of
\cite{Yaacov2008CONTINUOUSFO} codes its finitely many fragments by
one continuous combination $\delta_{\Delta_0}$. The connective basis
contains the required finite maxima and rational rescalings, and
Lemma~\ref{lem:finite-continuous-gluing} makes
$\delta_{\Delta_0}$ stable. Proposition~8.7 of the same reference
then gives the unique definable extension of this finite stable
reduct over an algebraically closed base. The extensions obtained
from two finite families agree on their intersection, because both
are determined by the same good definitions. Compactness therefore
glues the compatible finite extensions to one
$\Delta_U$-type. This proves existence and extension. Equality of
the good definitions also proves stationarity.

Invariance, monotonicity, and finite character follow directly from
the definition and the finite reduction. Transitivity follows because
the definition over $C$ is unchanged first from $C$ to $B$ and then
from $B$ to $D$. For symmetry, apply symmetry for the single stable
formula $\delta_{\Delta_0}$ to every finite $\Delta_0$. The closure of
$\Delta_U$ under interchange of variables ensures that both
directions remain in the family.

Finally, for each $\delta\in\Delta_U$, local character for the stable
formula $\delta$ supplies a subset of $B$ of density at most
$\lambda_U$ over which its good definition is based. The union over
the at most $\lambda_U$ formulas in $\Delta_U$, followed by algebraic
closure, still has density at most $\lambda_U$. This gives the set
$C$ in item~(5). These are precisely the local arguments underlying
\cite[Lemma~8.11]{Yaacov2008CONTINUOUSFO}. The finite stable
reduction above is what permits their use here. 
\end{proof}

From now on, we use $\forkindep$ to denote the $\Delta_U$-local relation.

\subsection{Canonical bases and harmonic barycenters}
\label{subsec:canonical-bases-harmonic-barycenters}

For a variable tuple $v$, let $\operatorname{Def}_E(v)$ denote the
space of $E$-definable predicates in $v$, equipped with the uniform
norm. This space is closed under finite convex combinations and uniform
limits.

\begin{definition}[Stable barycenter]
Let $\delta(u;v)\in\Delta_U$ and let $\mu$ be a regular Borel
probability measure on $S_\delta(E)$. A $\delta$-barycenter of $\mu$
is an $E$-definable predicate $b_\mu^\delta\in
\operatorname{Def}_E(v)$ satisfying
$b_\mu^\delta(v)=\int d_p\delta(v)\,\mathrm{d}\mu(p)$ at every parameter
$v$. For a measure on $S_{\Delta_U}(E)$, its stable barycenter is the
family of these predicates as $\delta$ ranges over $\Delta_U$.
\end{definition}

\begin{theorem}[Definability and lift invariance of stable barycenters]
\label{thm:definability-lift-invariance-stable-barycenters}
Every regular Borel probability measure $\mu$ on
$S_\delta(E)$ has a unique $\delta$-barycenter. If
$r_\delta:S_1(E)\to S_\delta(E)$ is restriction, then the barycenter
of a complete measure $\rho$ depends only on
$(r_\delta)_*\rho$. In particular, complete lifts of the same local
measure have the same barycenter. The assertion is summarized by the commutative diagram
\begin{center}
	\begin{tikzcd}[column sep=large,row sep=large]
		\mathcal P(S_1(E))\arrow{r}{(r_\delta)_*}
		\arrow{d}[swap]{\operatorname{bar}_\delta\circ(r_\delta)_*}
		&\mathcal P(S_\delta(E))\arrow{d}{\operatorname{bar}_\delta}\\
		\operatorname{Def}_E(v)\arrow[equals]{r}
		&\operatorname{Def}_E(v).
	\end{tikzcd}
\end{center}
\end{theorem}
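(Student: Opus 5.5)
We will build the barycenter pointwise, then show it is an $E$-definable predicate, and finally read off lift invariance from the change-of-variables formula. Fix $\delta(u;v)\in\Delta_U$. Theorem~\ref{thm:glued-local-independence-calculus} says $\delta$ is stable, so condition~(4) of Lemma~\ref{lem:equivalent-local-stability-criteria} gives every $p\in S_\delta(E)$ a unique good definition $d_p\delta$. This is an $E$-definable predicate in $v$, bounded by the bound of $\delta$. For each parameter $b$ in the monster model, the map $p\mapsto d_p\delta(b)$ is the value at $p$ of the unique nonforking extension of $p$ to $Eb$. We first show that this map is continuous on $S_\delta(E)$. For $b\in E$ it is just the evaluation $p\mapsto\delta(p;b)$, which is continuous by definition of $\tau_\delta$. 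For general $b$ we would write $d_p\delta(b)$ as a uniform limit of continuous combinations of finitely many instances $\delta(p;e_i)$ with $e_i\in E$, using the standard description of definitions in stable theories. Given continuity, we set
$$b_\mu^\delta(b)=\int_{S_\delta(E)}d_p\delta(b)\,\mathrm{d}\mu(p).$$

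Next we show that $b_\mu^\delta\in\operatorname{Def}_E(v)$. The approximation is uniform: for every $\varepsilon>0$ there is a finite family of $E$-definable predicates $\psi_1,\dots,\psi_k$ in $v$ and continuous functions $g_1,\dots,g_k$ on $S_\delta(E)$ such that $\sup_{p,b}|d_p\delta(b)-\sum_j g_j(p)\psi_j(b)|<\varepsilon$. Integrating gives an $\varepsilon$-approximation of $b_\mu^\delta$ by $\sum_j(\int g_j\,\mathrm{d}\mu)\psi_j$. That sum is a finite linear combination of $E$-definable predicates, so it lies in $\operatorname{Def}_E(v)$ after bounded rescaling. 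Since $\operatorname{Def}_E(v)$ is closed under uniform limits, $b_\mu^\delta$ belongs to it.

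To get the approximation we would argue with Stone--Weierstrass. Consider the functions $(p,b)\mapsto d_p\delta(b)$ on $S_\delta(E)\times S_v(E')$, where $E'$ is a small model containing $E$ and $b$ ranges over realizations. Joint continuity follows from definability of $d_p\delta$ together with continuity in $p$. By compactness, such a function is a uniform limit of sums of products $g(p)\psi(b)$. The $\psi$ are $E$-invariant, hence $E$-definable by Galois-theoretic descent, since $E=\operatorname{acl}^{\mathrm{eq}}(Ax_0)$ is algebraically closed. An alternative route is to use the fact that $d_p\delta$ is a uniform limit of $\max/\min$ combinations of $\delta(u_i;v)$ with the $u_i$ from a Morley sequence in $p$. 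Uniqueness is immediate: the barycenter is required to equal the integral at every parameter, and two predicates with the same values at every parameter coincide.

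Lift invariance is then formal. For $\rho\in\mathcal P(S_1(E))$, the complete type $p'$ has $\delta$-fragment $r_\delta(p')$, whose good definition is $d_{r_\delta(p')}\delta$. So the barycenter of $\rho$ is $\int d_{r_\delta(p')}\delta(b)\,\mathrm{d}\rho(p')$. By the change-of-variables formula this equals $\int d_p\delta(b)\,\mathrm{d}(r_\delta)_*\rho(p)$, and the diagram commutes. If $\rho_1$ and $\rho_2$ are two complete lifts of $\nu$ in the sense of Corollary~\ref{cor:universal-lifting-Keisler-measure}, then both push forward to $\nu$ and so have the same barycenter.

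The main obstacle is the first step: continuity of $p\mapsto d_p\delta(b)$ for parameters $b$ outside $E$, and the uniformity needed to pass the integral inside $\operatorname{Def}_E$. I would first try the approach via the uniform approximation of good definitions, in Ben Yaacov--Usvyatsov style, by continuous combinations of $\delta$-instances over $E$.
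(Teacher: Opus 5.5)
Your uniqueness and lift-invariance arguments are fine and agree with the paper. So does the pointwise definition of $b_\mu^\delta$, once $p\mapsto d_p\delta(b)$ is continuous for each fixed $b$; over a model this follows from symmetry of definitions, and the paper's weak-continuity claim contains the same fact.

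\textbf{The gap is in the definability step.} Suppose that for every $\varepsilon>0$ you had $\sup_{p,b}|d_p\delta(b)-\sum_j g_j(p)\psi_j(b)|<\varepsilon$ with continuous $g_j,\psi_j$. Then the kernel $(p,b)\mapsto d_p\delta(b)$ would be a uniform limit of jointly continuous functions, hence jointly continuous. You only have separate continuity, and joint continuity is false in general.

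Here is a counterexample. Let $\delta(x;y)$ be the $\{0,1\}$-valued indicator of $x=y$ in an infinite set, which is stable, and work over a model $M$. Then $d_{p_a}\delta=\mathbf{1}_{\{v=a\}}$ for the realized types $p_a=\operatorname{tp}_\delta(a/M)$, while $d_{p_\infty}\delta=0$ for the non-realized type. As $a$ runs through distinct elements of $M$, $p_a\to p_\infty$ in $S_\delta(M)$ and the $v$-types $q_a$ of $a$ converge to the non-realized $q_\infty$. Yet $d_{p_a}\delta(q_a)=1$ while $d_{p_\infty}\delta(q_\infty)=0$.

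Separate but not joint continuity is exactly the situation governed by Grothendieck's double-limit criterion, so Stone--Weierstrass cannot be applied. The descent remark does not rescue this: continuous functions on $S_v(E')$ are $E'$-definable, not automatically $E$-invariant. Your Morley-sequence alternative only shows that each individual $d_p\delta$ is definable, and averaging these over $\mu$ meets the same uniformity problem.

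\textbf{The missing idea is to approximate the integral, not the kernel.} Stability makes $K_\delta=\{d_p\delta:p\in S_\delta(E)\}$ relatively weakly compact in $C(S_v(E))$. On a weakly compact set the weak topology coincides with pointwise convergence, so your separate continuity in $p$ upgrades to weak continuity of $p\mapsto d_p\delta$. The Pettis integral $\int d_p\delta\,\mathrm{d}\mu(p)$ then exists in the weakly closed convex hull of $K_\delta$. By Mazur, that hull is the norm closure of the finite convex combinations $\sum_i t_i d_{p_i}\delta$, each of which is $E$-definable.

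In the equality example this gives $b_\mu^\delta=\sum_a\mu(\{p_a\})\mathbf{1}_{\{v=a\}}$. This is a uniform limit of its finite partial sums, even though the kernel itself admits no uniform finite-rank approximation. This weak-compactness and Mazur argument is how the paper's proof proceeds.
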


\begin{proof}
Consider the definition map $j_\delta:p\mapsto d_p\delta$ from $S_\delta(E)$ into $C(S_v(E))$. The local stability theorem does more than make the point evaluations continuous: it identifies $j_\delta$ with a weakly continuous map into a weakly compact subset $K_\delta$ of $C(S_v(E))$ \cite[Proposition~7.7]{Yaacov2008CONTINUOUSFO}. This weak continuity, rather than measurability of point evaluations alone, ensures that the Pettis integral $\int j_\delta(p)\,\mathrm{d}\mu(p)$ exists in the weakly closed convex hull of $K_\delta$. Its value at $v$ is exactly $\int d_p\delta(v)\,\mathrm{d}\mu(p)$.

Mazur's theorem now gives the required uniform approximation. For
every $\varepsilon>0$, there are
$p_1,\ldots,p_m\in S_\delta(E)$ and
$t_1,\ldots,t_m\in[0,1]$, with $\sum_it_i=1$, such that
$\|\sum_it_id_{p_i}\delta-b_\mu^\delta\|_\infty<\varepsilon$.
Each such finite convex combination is an $E$-definable predicate, since real continuous connectives are part of the continuous-logic syntax. %No perturbation of the coefficients is needed.
Since definable predicates are closed under uniform limits,
$b_\mu^\delta$ is $E$-definable.

The integral determines the value of the barycenter at every
parameter, so uniqueness is immediate. Finally, integration against a complete measure $\rho$ factors through its pushforward $(r_\delta)_*\rho$. This proves lift invariance and the commutativity of the diagram.
\end{proof}

\begin{definition}[Barycentric canonical base]
Let $\mu$ be a local measure on $S_{\Delta_U}(E)$. Its barycentric
canonical base is
$\operatorname{Cb}^{\mathrm{bar}}_{\Delta_U}(\mu)=
\operatorname{dcl}^{\mathrm{eq}}
\{\operatorname{cp}(b_\mu^\delta):\delta\in\Delta_U\}$,
where $\operatorname{cp}(b_\mu^\delta)$ denotes the canonical
parameter of the definable predicate $b_\mu^\delta$. %Thus an	infinite family is coded by the definable closure of its individual canonical parameters, not by an assumed single canonical parameter.
\end{definition}

\begin{corollary}[Nonforking preserves the barycentric canonical base]
\label{cor:nonforking-preserves-barycentric-cb}
Let $B\supseteq E$, and let
$\sigma_B:S_{\Delta_U}(E)\to S_{\Delta_U}(B)$ send a stationary local
type to its unique extension whose realizations $a$ satisfy
$a\forkindep[E]B$. The map $\sigma_B$ is Borel. Hence, for a local
measure $\mu$, the pushforward $\mu_B=(\sigma_B)_*\mu$ is defined and
satisfies
$b_{\mu_B}=b_\mu$ and
$\operatorname{Cb}^{\mathrm{bar}}_{\Delta_U}(\mu_B)=
\operatorname{Cb}^{\mathrm{bar}}_{\Delta_U}(\mu)$.
\end{corollary}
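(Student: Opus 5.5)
We prove the corollary in three steps: Borel measurability of $\sigma_B$, the key identity $d_{\sigma_B(p)}\delta=d_p\delta$, and integration of that identity. Stationarity over the algebraically closed set $E=\operatorname{acl}^{\mathrm{eq}}(Ax_0)$ comes from item~(6) of Theorem~\ref{thm:glued-local-independence-calculus}. Existence of the nonforking extension comes from item~(4). Together they make $\sigma_B$ a well-defined map.

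\textbf{Step 1: $\sigma_B$ is Borel.} The topology on $S_{\Delta_U}(B)$ is the weakest one making the maps $q\mapsto\delta^q(b)$ continuous, for $\delta\in\Delta_U$ and $b\in B$. So it suffices to show that each composite $p\mapsto\delta^{\sigma_B(p)}(b)$ is Borel on $S_{\Delta_U}(E)$. By construction, $\delta^{\sigma_B(p)}(b)=d_p\delta(b)$, where $d_p\delta$ is the good definition of $p\upharpoonright\delta$ over $E$.

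By Lemma~\ref{lem:equivalent-local-stability-criteria} and the proof of Theorem~\ref{thm:definability-lift-invariance-stable-barycenters}, the map $p\mapsto d_p\delta$ is weakly continuous into $C(S_v(E))$. Composing with evaluation at $\operatorname{tp}(b/E)$ is a weakly continuous functional. Hence $p\mapsto d_p\delta(b)$ is continuous on $S_\delta(E)$, so continuous on $S_{\Delta_U}(E)$ after restriction.

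Preimages of subbasic open sets are therefore open. We actually get continuity of $\sigma_B$, which implies Borel measurability. This holds provided the relevant subbasis generates the Borel $\sigma$-algebra. If $B$ is uncountable this needs care, but continuity itself suffices for pushing forward regular Borel measures.

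\textbf{Step 2: definitions are preserved.} Write $\operatorname{res}:S_{\Delta_U}(B)\to S_{\Delta_U}(E)$ for restriction. For each $\delta\in\Delta_U$, the definition of $\sigma_B(p)$ is the same $E$-definable predicate $d_p\delta$. Indeed, the extension is the one whose good definitions remain over $E$, and by item~(6) this definition is unique.

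Here the barycenter of $\mu_B$ is computed against definitions of types over $B$, viewed as predicates in $v$. Since $d_{\sigma_B(p)}\delta=d_p\delta$ identically, the identity $b_{\mu_B}=b_\mu$ is meaningful as an equality of $E$-definable predicates.

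\textbf{Step 3: integrate.} By the change-of-variables formula for pushforward measures,
$$b^\delta_{\mu_B}(v)=\int d_q\delta(v)\,\mathrm{d}(\sigma_B)_*\mu(q)=\int d_{\sigma_B(p)}\delta(v)\,\mathrm{d}\mu(p)=\int d_p\delta(v)\,\mathrm{d}\mu(p)=b^\delta_\mu(v).$$
Uniqueness in Theorem~\ref{thm:definability-lift-invariance-stable-barycenters} then gives $b^\delta_{\mu_B}=b^\delta_\mu$. Their canonical parameters coincide, so taking $\operatorname{dcl}^{\mathrm{eq}}$ over $\delta\in\Delta_U$ yields equality of the barycentric canonical bases.

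\textbf{Main obstacle.} The delicate point is Step 1. We must justify that $p\mapsto d_p\delta(b)$ is measurable when $b\in B$ is a parameter outside $E$. This means using the weak continuity of the definition map, not merely continuity at parameters from $E$. We also need to make sure that $(\sigma_B)_*\mu$ is a regular Borel measure. Continuity of $\sigma_B$ between compact Hausdorff spaces resolves both issues.
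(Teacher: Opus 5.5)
Your proposal is correct and follows essentially the same route as the paper: you identify the $b$-coordinate of $\sigma_B(p)$ with $d_p\delta(b)$ and use weak continuity of $p\mapsto d_p\delta$ to control these coordinates. You then integrate the identity $d_{\sigma_B(p)}\delta=d_p\delta$ to get $b_{\mu_B}=b_\mu$ and equality of the barycentric canonical bases. Where the paper only records Borel coordinates, you prove continuity of $\sigma_B$, which makes Borel measurability automatic even for uncountable $B$ and also gives regularity of $(\sigma_B)_*\mu$. Your hedge about the subbasis generating the Borel $\sigma$-algebra is therefore unnecessary.
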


\begin{proof}
For $\delta\in\Delta_U$ and $b\in B$, the $b$-coordinate of
$\sigma_B(p)$ is $d_p\delta(b)$. The map
$p\mapsto d_p\delta$ is continuous for the weak topology, so each
such coordinate is Borel. The logic topology on
$S_{\Delta_U}(B)$ is generated by these coordinates. Consequently, $\sigma_B$ is Borel.

Nonforking extension leaves every good definition over $E$
unchanged. Thus the integrands defining $b_{\mu_B}^\delta$ and
$b_\mu^\delta$ agree for each $\delta$, and so do their integrals.
The equality of barycentric canonical bases follows from the
definable-closure definition above.
\end{proof}

Recall from Theorem~\ref{thm:second-Martin-isomorphism} that a
normalized harmonic type $q\in\mathscr H_1(Ax_0)$ has a representing
measure $\nu_q$ on the minimal Martin type locus. Define the Borel map
$\mathfrak h_\Theta(p)=\operatorname{tp}_{\Theta_U}(k_p/E)$ and put
$\lambda_q=(\mathfrak h_\Theta)_*\nu_q$.

\begin{corollary}[Recovery of the harmonic barycenter]
\label{cor:harmonic-barycenter-recovery}
The $\Theta_U$-barycenter of $\lambda_q$ recovers $q$. For every
$a\in A\cap C_n$, one has
$$
b_{\lambda_q}^{\Theta_U}(\iota_n(a))
=\frac{2^{-n}}{c_n}\int k_p(a)\,\mathrm{d}\nu_q(p)
=\frac{2^{-n}}{c_n}\operatorname{Ev}^q(a).
$$
Consequently
$\operatorname{Ev}^q(a)=2^nc_n
b_{\lambda_q}^{\Theta_U}(\iota_n(a))$. These values on the dense set
$A$, together with the normalization at $x_0$, determine the unique
member of $H_1^+(U,x_0)$ represented by $q$. Any two complete
Keisler measure lifts of $\lambda_q$ give the same recovered
harmonic barycenter.
\end{corollary}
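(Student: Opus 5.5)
The plan is to unwind the definition of the $\Theta_U$-barycenter of $\lambda_q=(\mathfrak h_\Theta)_*\nu_q$. Then use the change-of-variables formula to move the integral to $\nu_q$ on the minimal Martin type locus, and finish with the representation identity of Theorem~\ref{thm:second-Martin-isomorphism}.

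First, identify the good definition of a realized $\Theta_U$-type. For $p$ in the minimal Martin locus, $\mathfrak h_\Theta(p)=\operatorname{tp}_{\Theta_U}(k_p/E)$ is realized by $k_p\in H_1^+(U,x_0)$. Its good definition, which exists and is unique by Theorem~\ref{thm:theta-n-stable}, the gluing in Theorem~\ref{thm:glued-local-independence-calculus}, and Lemma~\ref{lem:equivalent-local-stability-criteria}(4), therefore satisfies
$$d_{\mathfrak h_\Theta(p)}\Theta_U(\iota_n(a))=\Theta_U(k_p;\iota_n(a))=\frac{2^{-n}}{c_n}k_p(a)$$
for $a\in A\cap C_n$. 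Such an $a$ is a parameter in $E$, since $\iota_n(a)$ is $Ax_0$-definable in $\mathcal L_{\mathrm{gl}}$. By Theorem~\ref{thm:definability-lift-invariance-stable-barycenters}, the barycenter $b^{\Theta_U}_{\lambda_q}$ exists, is unique, and is given pointwise by $\int d_s\Theta_U\,\mathrm d\lambda_q(s)$. Pushing forward along the Borel map $\mathfrak h_\Theta$ rewrites this as $\int d_{\mathfrak h_\Theta(p)}\Theta_U(\iota_n(a))\,\mathrm d\nu_q(p)$. Substituting the displayed formula gives the first equality.

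Second, note that $k_p(a)=\operatorname{Ev}^{\mathfrak h(p)}(a)$ by the definition of $\mathfrak h$. The integral identity of Theorem~\ref{thm:second-Martin-isomorphism} then yields $\int k_p(a)\,\mathrm d\nu_q=\operatorname{Ev}^q(a)$, which is the second equality. Multiplying by $2^nc_n$ gives the recovery formula.

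Third, the values $\operatorname{Ev}^q(a)$ for $a\in A$ determine the representative of $q$, since each $a\in A$ lies in some $C_n$ (the $C_n$ exhaust $U$). Density of $A$, continuity of harmonic functions, and $\operatorname{Ev}^q(x_0)=1$ give uniqueness, as in the remark after Definition~\ref{def:positive-harmonic-evaluation-cone} and Lemma~\ref{lem:compact-convex-harmonic-profiles}. Lift invariance is exactly the commutative diagram of Theorem~\ref{thm:definability-lift-invariance-stable-barycenters} with $\delta=\Theta_U$. Any complete lift $\rho$ with $(r_{\Theta_U})_*\rho=\lambda_q$, which exists by the Hahn--Banach argument of Corollary~\ref{cor:universal-lifting-Keisler-measure}, has barycenter $b^{\Theta_U}_{\lambda_q}$ and so recovers the same $q$.

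The main obstacle is the measurability and integrability step. We need $\mathfrak h_\Theta$ to be Borel, so that $\lambda_q$ is a regular Borel measure on $S_{\Theta_U}(E)$, and we need $p\mapsto k_p(a)$ to be $\nu_q$-integrable. I would get Borelness from continuity. By Lemma~\ref{prop:kernel-type-homeomorphism}, $p\mapsto k_p$ is continuous into the compact-open topology. Each coordinate of $\mathfrak h_\Theta(p)$ at $\iota_n(x)$ is $2^{-n}c_n^{-1}k_p(x)$, which is continuous in $p$, uniformly in $x\in C_n$ by the Harnack equicontinuity already used in Theorem~\ref{thm:theta-n-stable}. Hence $\mathfrak h_\Theta$ is continuous for the logic topology, and integrability follows from the bound $0\le\Theta_U\le 1$. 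Regularity of the pushforward measure is automatic in the metrizable setting assumed in Theorem~\ref{thm:second-Martin-isomorphism}.
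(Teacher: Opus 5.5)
Your proposal is correct and takes essentially the same route as the paper. You evaluate the stable barycenter of $\lambda_q$ at $\iota_n(a)$ using the realized good definitions $\Theta_U(k_p;\iota_n(a))=2^{-n}c_n^{-1}k_p(a)$, then move the integral to $\nu_q$ and apply the representing identity of Theorem~\ref{thm:second-Martin-isomorphism}. Uniqueness follows from density of $A$ and continuity, and lift invariance from Theorem~\ref{thm:definability-lift-invariance-stable-barycenters}. Your continuity argument for $\mathfrak h_\Theta$, via compact-open continuity of $p\mapsto k_p$ from Lemma~\ref{prop:kernel-type-homeomorphism}, also supplies the Borel measurability that the paper only asserts.
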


\begin{proof}
The displayed identity follows from the definition of the stable
barycenter for the linear evaluation predicate $\Theta_U$ and from
the representing identity in
Theorem~\ref{thm:second-Martin-isomorphism}. Since the compact sets
$C_n$ exhaust $U$, every point of $A$ belongs to some $C_n$, and the
coefficient $2^{-n}/c_n$ is nonzero. The barycenter therefore
recovers the evaluation profile on $A$. Harmonic functions are
continuous and $A$ is dense, so this profile determines the function
on $U$. Lift independence is a direct application of
Theorem~\ref{thm:definability-lift-invariance-stable-barycenters}.
%Thus the recovery statement uses only the stable local predicate $\Theta_U$ and is not an application of global stability of the	ambient theory.
\end{proof}

\section{First jets and fine potential theory}

\subsection{The expanded jet language}

The abstract Brelot language does not by itself contain a derivative.
Consequently, differential rigidity should only be discussed after fixing a
smooth analytic model. Throughout this subsection, $U$ is either a relatively
compact regular domain in $\mathbb R^n$ or a regular coordinate domain in a
smooth $n$-manifold. The scalar harmonic sort is still $H(U)$, while a harmonic
map $F:U\to\mathbb R^m$ is an $m$-tuple from $H(U)$. This localization keeps the
jet construction separate from Brelot spaces that have no differentiable
structure.

This subsection is an interpretation in a fixed smooth structure, not an axiomatization of smooth harmonic spaces up to elementary equivalence. In particular, the chart maps, cotangent bundle, derivatives of transition maps, and interior derivative bounds are named data. Without them, neither $\mathrm{d}_xh$ nor $\det(\mathrm{d}_xF)$ is a formula of $\mathcal L_{\mathrm{Har}}$.

\begin{definition}[Language Jet]
\label{def:first-jet-language}
For every smooth regular $U$, let $J^1_U=J^1(U,\mathbb R)$ be a gauged
sort for scalar first jets. It comes with the base, value, and covector
projections $\pi_U:J^1_U\to X_U$, $\pi_0:J^1_U\to\mathbb R$, and
$\pi_1:J^1_U\to T^*U$. We add the function symbol
$j^1_U:X_U\times H(U)\to J^1_U$, whose intended interpretation is
$j^1_U(x,h)=(x,h(x),\mathrm{d}_xh)$. If $V\subseteq U$, the language also contains the
jet restriction $r^1_{V,U}:J^1_U|_{X_V}\to J^1_V$. The resulting language is
denoted by $\mathcal L_{\mathrm{Jet}}$.

For an $m$-tuple $F=(h_1,\ldots,h_m)$, the notation $j^1_U(x,F)$ means the
corresponding product jet. When $m=n$, the truncated Jacobian predicate is
$\operatorname{Jac}_U(x,F)=\min\{1,|\det(\mathrm{d}_xF)|\}$. Thus
$\operatorname{Jac}_U(x,F)=0$ says exactly that the first jet of $F$ is
singular at $x$.
\end{definition}

The symbols in Definition~\ref{def:first-jet-language} are required to satisfy
the following axioms. They are an axiom scheme because the continuity bounds
are stated on each bounded harmonic ball and each compact coordinate chart.

\begin{definition}[First-jet scheme]
\label{def:first-jet-axioms}
The chosen smooth jet expansion satisfies the following requirements on every named compact chart and bounded harmonic locus.
\begin{enumerate}
	\item The jet lies over its evaluation point, so $\pi_U(j^1_U(x,h))=x$.
	Its value component is the old evaluation predicate:
	$\pi_0(j^1_U(x,h))=\operatorname{Eval}_U(x,h)$.
	
	\item The covector part is linear in the harmonic variable. In shorter
	notation, $D_x(ah+bk)=aD_xh+bD_xk$ for named scalars $a,b$.
	
	\item Jets are natural under restriction. For every $V\subseteq U$, the
	following diagram commutes:
	\begin{center}
		\begin{tikzcd}[column sep=large,row sep=large]
			X_V\times H(U) \arrow{r}{\operatorname{id}\times\rho_{V,U}}
			\arrow{d}[swap]{j^1_U|_{X_V}}
			& X_V\times H(V) \arrow{d}{j^1_V} \\
			J^1_U|_{X_V} \arrow{r}[swap]{r^1_{V,U}}
			& J^1_V.
		\end{tikzcd}
	\end{center}
	The maps $r^1_{V,U}$ satisfy the same identity and cocycle laws as the
	restriction maps $\rho_{V,U}$.
	
	\item The covector is the differential of the named evaluation predicate.
	More explicitly, if $K$ is contained in a coordinate chart and $h$ ranges in
	a fixed bounded harmonic ball, there is a named modulus $\omega_K$ with
	$\omega_K(t)\to0$ as $t\to0$. Write
	$R_h(x,y)=h(y)-h(x)-\langle\mathrm{d}_xh,\chi(y)-\chi(x)\rangle$. The axiom requires
	$|R_h(x,y)|\le d_U(x,y)\omega_K(d_U(x,y))$ for $x,y\in K$. Under a change of
	chart, the covector transforms by the usual transpose inverse of the
	derivative of the transition map.
	
	\item The jet symbols respect harmonic gluing. If local harmonic sections
	agree on overlaps and glue to $h$, their first jets agree under the maps
	$r^1_{V,U}$ and glue to $j^1_U(-,h)$. All jet projections, restrictions, and
	determinant predicates have the uniform continuity moduli supplied by the
	interior derivative estimates on the chosen bounded sorts.
\end{enumerate}
\end{definition}

The Taylor axiom prevents an arbitrary covector field from being attached to
a harmonic function. It also shows that, once the smooth harmonic reduct is
fixed, the interpretation of $j^1_U$ is forced. The other important
compatibility is with the Dirichlet operator. For $x\in U$ and vector-valued
boundary data, the triangle
\begin{center}
\begin{tikzcd}[column sep=large,row sep=large]
	J_x^1(U,\mathbb R^m)  
	& H(U)^m  \arrow[swap]{l}{j^1_{U,x}} \\
	B(U)^m \arrow{u}{j^1_{U,x}\circ D_U^m} \arrow[equals]{r}
	& B(U)^m \arrow[swap]{u}{D_U^m}
\end{tikzcd}
\end{center}
commutes. Here $j^1_{U,x}$ means evaluation of the jet at the fixed point $x$.
This diagram says that solving the Dirichlet problem and then taking the first
jet gives the same definable germ as the composite symbol named on the
diagonal.

For a named definable compact $K\Subset U$ and a harmonic map
$F:U\to\mathbb R^n$, let
$\Sigma_K(z/F)$ be the singular-jet partial type consisting of the condition
$z\in K$ together with $\operatorname{Jac}_U(z,F)=0$. The point of using a
partial type is that non-vanishing is an omission statement. If the Jacobian
is positive on $K$, compactness gives a number $\delta_K>0$ such that
$|\det(\mathrm{d}_xF)|\ge\delta_K$ on $K$. %no uniform lower bound near the boundary is being asserted.

\begin{theorem}[RKC and Lewy in the jet language]
\label{thm:RKC-Lewy-jet}
Let $\Omega\subseteq\mathbb R^2$ be a Jordan domain and let $Q\subseteq
\mathbb R^2$ be convex. If an orientation-preserving boundary homeomorphism
$f:\partial\Omega\to\partial Q$ has harmonic Dirichlet extension
$F=D_\Omega^2(f)$, then $F$ is a diffeomorphism from $\Omega$ onto $Q$ and
$\Sigma_K(z/F)$ is omitted for every $K\Subset\Omega$. This is the
Rad\'o--Kneser--Choquet phenomenon in first-jet form
\cite{IwaniecOnninen2014RKC,IwaniecOnninen2019RKC}.

More generally, a degree-one harmonic map between planar domains equipped
with the smooth metrics considered by Martin has no singular first jet. Thus
its singular-jet partial type is omitted and the map is a diffeomorphism
\cite{Martin2016CurvedLewy}.
\end{theorem}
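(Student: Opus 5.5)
The plan is to split the statement into an analytic input, quoted from the RKC and Lewy literature, and a logical translation using the first-jet scheme of Definition~\ref{def:first-jet-axioms}. For the analytic part in the first clause, I would cite the Radó--Kneser--Choquet theorem in the form of \cite{IwaniecOnninen2014RKC,IwaniecOnninen2019RKC}. Since $f:\partial\Omega\to\partial Q$ is an orientation-preserving homeomorphism onto the boundary of a convex domain, the harmonic extension $F$ is a homeomorphism of $\overline\Omega$ onto $\overline Q$ and a diffeomorphism of $\Omega$ onto $Q$. By Lewy's theorem, a planar harmonic homeomorphism has nonvanishing Jacobian, so $\det(\mathrm d_xF)>0$ on $\Omega$.

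I would then check that $F$ really is the object named in the language. The Jordan domain $\Omega$ is regular for the Laplacian in the plane, since every boundary point lies on a nondegenerate continuum. Hence $D_\Omega^2(f)$ is the unique Dirichlet solution supplied by Theorem~\ref{thm: Dirichlet solution and trace}. The commuting triangle relating $j^1_{U,x}\circ D_U^m$ and $j^1_{U,x}$ then identifies the jet of the named Dirichlet solution with the classical first jet.

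The Taylor axiom (4) forces $\pi_1(j^1_\Omega(x,F))=\mathrm d_xF$ on each named compact chart. Consequently $\operatorname{Jac}_\Omega(x,F)=\min\{1,|\det \mathrm d_xF|\}$ is the classical truncated Jacobian. Now fix $K\Subset\Omega$. The map $x\mapsto\operatorname{Jac}_\Omega(x,F)$ is continuous, by the uniform moduli in axiom (5), and positive on the compact set $K$. It therefore has a positive minimum $\delta_K$, so $\inf_{z\in K}\operatorname{Jac}_\Omega(z,F)\ge\min\{1,\delta_K\}>0$. No $z\in K$ realizes $\operatorname{Jac}=0$, and $\Sigma_K(z/F)$ is omitted. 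In fact it is omitted uniformly, which also gives omission in elementary extensions of the fixed smooth interpretation.

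For the second clause I would proceed the same way, with Martin's curved Lewy theorem \cite{Martin2016CurvedLewy} as the analytic input. For a degree-one harmonic map between planar domains carrying the smooth metrics considered there, the Jacobian does not vanish, and the map is a diffeomorphism. The translation step is unchanged, with one extra check. Under chart changes the covector transforms by the transpose-inverse rule of axiom (4). As a result $\det\mathrm d_xF$ changes only by a factor that is nonzero and bounded on compact charts, so the vanishing locus of $\operatorname{Jac}$ does not depend on the chart. Omission on each $K$ again follows from compactness.

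I expect the main obstacle to be exactly this faithfulness of the jet interpretation. One must show that the named symbol $j^1$ in the chosen expansion coincides with the genuine differential, both for Euclidean charts and for Martin's metrics. The Taylor remainder bound in axiom (4) is what pins the covector down uniquely. I would argue that two covectors satisfying the bound at the same point differ by a linear form $\ell$ with $|\ell(\chi(y)-\chi(x))|=o(d(x,y))$, and this forces $\ell=0$. The remaining steps are citations together with the compactness argument above.
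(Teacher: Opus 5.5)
Your proposal is correct and follows essentially the paper's route. RKC supplies the harmonic homeomorphism; the paper takes only injectivity from it and gets the diffeomorphism from Lewy's nonvanishing Jacobian plus the inverse function theorem. Lewy or Martin then supplies $\det(\mathrm{d}_xF)\neq0$, and omission of $\Sigma_K(z/F)$ follows from the compactness bound $\delta_K$ noted just before the theorem. Your additional checks make explicit what the paper leaves implicit: Dirichlet regularity of Jordan domains, the Taylor axiom forcing the named covector to be $\mathrm{d}_xF$, and uniform omission persisting in elementary extensions.

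One caveat, shared by the paper's proof, concerns the second clause. For Martin's curved target metrics the map solves a nonlinear equation and is not an $n$-tuple from the linear sort $H(U)$. So, exactly as in the paper's $p$-harmonic remark, the translation step is not literally unchanged: it requires adjoining a sort of such maps with its own first-jet symbol.
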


\begin{proof}
The RKC theorem first gives injectivity of the harmonic extension into the
convex target. Planar Lewy rigidity then says that the Jacobian of this
harmonic homeomorphism never vanishes. In the language above, this is exactly
the omission of $\Sigma_K(z/F)$ on every compact $K$. Martin's curved-metric
theorem replaces convex boundary data by the degree-one hypothesis and proves
the same pointwise invertibility of $\mathrm{d}_xF$. The inverse function theorem then
turns the jet statement into the asserted local, and hence global,
diffeomorphism statement.
\end{proof}

The $p$-harmonic version in \cite{IwaniecOnninen2019RKC} is read in the same
way after adjoining a $p$-Dirichlet solution sort and its first-jet symbol. It
is not literally a sentence about the linear sort $H(U)$ when $p\ne2$. The
common content is that suitable topological boundary data force the
singular-jet type to be omitted.

\subsection{Thin, polar, and finely open sets}

Fix $U\in\tau_0$. For $n\ge1$, use the bounded predicate
$\operatorname{Ev}_n(x,u)=\min\{u(x),n\}/n$ on $X_U\times S^+(U)$. The family
$\{\operatorname{Ev}_n:n\ge1\}$ determines the extended value of $u$. In
particular, $u(x)=+\infty$ if and only if
$\operatorname{Ev}_n(x,u)=1$ for every $n$. We assume that the language has a
countable regular basis and definable restricted infima over the admissible
sets under consideration.

\begin{definition}[Barrier partial type]
\label{def:barrier-partial-type}
Let $A\subseteq U$ be admissible and let $x\in U$. A barrier code is a tuple
$\lambda=(n,r,s,V)$, where $n\ge1$, $r,s\in\mathbb Q\cap[0,1]$, $r<s$, and
$V$ is a basic neighborhood of $x$. The partial type
$\Pi^\lambda_{A,x}(u)$ says
$\operatorname{Ev}_n(x,u)\le r$ and
$\inf_{y\in(A\setminus\{x\})\cap V}\operatorname{Ev}_n(y,u)\ge s$. If the
punctured infimum is not primitive, these conditions are written over a
countable exhaustion by closed annuli around $x$.
\end{definition}

\begin{lemma}[Barrier characterization]
\label{lem:barrier-characterization-thinness}
The set $A$ is thin at $x$ if and only if
$\Pi^\lambda_{A,x}$ is realized for some barrier code $\lambda$.
\end{lemma}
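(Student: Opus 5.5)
The plan is to reduce both directions to the classical definition of thinness in Brelot potential theory. For $x\notin\overline{A\setminus\{x\}}$, thinness holds trivially; otherwise $x$ is in the fine closure. There, $A$ is thin at $x$ if and only if there is $u\in S^+(U)$ with
$$u(x)<\liminf_{y\to x,\ y\in A\setminus\{x\}}u(y).$$
The barrier partial type $\Pi^\lambda_{A,x}$ is designed to be a truncated, rationally quantized version of this strict inequality. Each direction therefore amounts to translating between the analytic inequality and the conditions $\operatorname{Ev}_n(x,u)\le r$ and $\inf_{y\in(A\setminus\{x\})\cap V}\operatorname{Ev}_n(y,u)\ge s$.

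For the forward direction, assume $A$ is thin at $x$ and fix $u\in S^+(U)$ with $u(x)<\ell:=\liminf_{y\to x,\,y\in A\setminus\{x\}}u(y)$. If $x\notin\overline{A\setminus\{x\}}$, choose a basic $V$ with $(A\setminus\{x\})\cap V=\varnothing$. The infimum is then vacuous (equal to $1$ after truncation), and $u=0$ realizes the type with any $r<s$. In the main case, $u(x)<\infty$, so pick $n>u(x)$. This makes $\operatorname{Ev}_n(x,u)=u(x)/n<1$. Next choose rationals $r<s$ with
$$u(x)/n\le r<s<\min\{\ell,n\}/n,$$
which is possible because $u(x)<\min\{\ell,n\}$. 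By the definition of $\liminf$ there is a basic neighborhood $V$ of $x$ on which $u\ge sn$ along $A\setminus\{x\}$. Since truncation is monotone, this gives the infimum condition. The countable regular basis makes such a $V$ available among the named basic sets. If the punctured infimum is only given through closed annuli, each annular infimum is at least $s$, so every condition in the exhaustion holds.

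For the converse, suppose $u$ realizes $\Pi^\lambda_{A,x}$ with $\lambda=(n,r,s,V)$. Then $\min\{u(x),n\}\le rn<n$, so $u(x)\le rn$. Also $\min\{u(y),n\}\ge sn$ for all $y\in(A\setminus\{x\})\cap V$, which gives $u(y)\ge sn$ there. Hence
$$\liminf_{y\to x,\ y\in A\setminus\{x\}}u(y)\ge sn>rn\ge u(x),$$
and $A$ is thin at $x$ by the classical criterion. The trivial case needs no separate treatment, because the same definition covers it.

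The main obstacle is the annular-exhaustion caveat. One must check that, when the punctured infimum is not primitive, the countably many annular conditions together are equivalent to the single infimum bound. The infimum over $(A\setminus\{x\})\cap V$ is the infimum over the countable union of the sets $(A\cap V)\cap\overline{\{\rho_k\le d_U(y,x)\le\rho_{k-1}\}}$. A single bound on it is therefore equivalent to the same bound on every piece. On each compact annulus the restricted infimum is among the definable infima assumed admissible, so each condition is a legitimate formula. A secondary point is that "realized" should be read in $\mathfrak M$ itself, not in an elementary extension. Since the forward direction produces an actual $u\in S^+(U)$ and the converse uses an actual realizer, no saturation is needed.
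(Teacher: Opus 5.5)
Your proposal is correct and follows essentially the same route as the paper. Both directions reduce to the classical superharmonic barrier criterion: the forward direction shrinks $V$, truncates at a large $n$, and picks rationals $r<s$ between $u(x)$ and the lower limit, while the converse reads the strict separation $u(x)\le rn<sn\le\liminf_{y\to x,\,y\in A\setminus\{x\}}u(y)$ directly off a realization. One small phrasing slip, which your argument never relies on: in the nontrivial case, $x$ lies in the ordinary closure of $A\setminus\{x\}$, not automatically in its fine closure, since membership in the fine closure is precisely non-thinness.
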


\begin{proof}
The classical superharmonic barrier criterion for thinness gives a positive
superharmonic $u$ such that $u(x)$ is strictly smaller than the lower limit of
$u$ along $A\setminus\{x\}$ approaching $x$ \cite{doob1984classical}. After
shrinking $V$, truncating at a sufficiently large $n$, and choosing rational
numbers between the two values, one obtains a realization of
$\Pi^\lambda_{A,x}$. Conversely, a realization of this partial type supplies
the required strict superharmonic separation, and hence a barrier at $x$.
\end{proof}

There is no reason for $\Pi^\lambda_{A,x}$ to have a unique complete
extension. If $T_{A,x}=\operatorname{Th}_{\mathcal L(A,x)}(\mathfrak M)$,
then the correct object is the closed set
$[\Pi^\lambda_{A,x}]=\{q\in S_{S^+(U)}(T_{A,x}):q\supseteq
\Pi^\lambda_{A,x}\}$. A model realizes the partial type precisely when it
realizes at least one member of this set. Consequently, omitting one selected
completion does not omit the barrier type. Every completion must be omitted.

Call a partial type uniformly principal if a single formula has a nonempty
zero set in every model of the complete theory and every element of that zero
set realizes the partial type. This is the continuous-logic form of
principality needed below.

\begin{theorem}[Omitting partial types and thinness]
\label{thm:OTT-persistent-thinness}
Assume that $T_{A,x}$ and its language are countable. Then $A$ is thin at $x$
in every model of $T_{A,x}$ if and only if
$\Pi^\lambda_{A,x}$ is uniformly principal for some barrier code $\lambda$.
If every barrier partial type is non-principal, there is a separable model of
$T_{A,x}$ omitting all of them, and $A$ is not thin at $x$ in that model.
\end{theorem}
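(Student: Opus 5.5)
The plan is to reduce both directions to the barrier characterization, Lemma~\ref{lem:barrier-characterization-thinness}, applied inside an arbitrary model $\mathfrak N\models T_{A,x}$. The lemma tells us that thinness of $A$ at $x$ in $\mathfrak N$ is equivalent to $\mathfrak N$ realizing $\Pi^\lambda_{A,x}$ for some barrier code $\lambda$. The set of barrier codes is countable: $n\ge1$, $r<s$ rational in $[0,1]$, and $V$ ranging over the countable regular basis. Thus the statement becomes a statement about realizing or omitting one member of a countable family of partial types in a countable language, and the continuous omitting types theorem \cite[Theorem 4.9]{Eagle2014omiting} is the tool, as in Lemma~\ref{lem:realization-principal-types}.

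For the easy direction, suppose $\Pi^\lambda_{A,x}$ is uniformly principal, witnessed by a formula $\psi(u)$ whose zero set is nonempty in every model and consists of realizations. Then every $\mathfrak N\models T_{A,x}$ contains some $u$ with $\psi^{\mathfrak N}(u)=0$. That $u$ realizes $\Pi^\lambda_{A,x}$, so Lemma~\ref{lem:barrier-characterization-thinness}, read in $\mathfrak N$, gives thinness. One point needs checking: the lemma's proof uses the classical barrier criterion, so I would note that the conditions in $\Pi^\lambda_{A,x}$ are expressed by the admissible restricted infima (or the countable annulus exhaustion). Hence a realization in $\mathfrak N$ yields a strict superharmonic separation in $\mathfrak N$, and this is taken as the meaning of thinness in an arbitrary model.

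For the converse and the final clause, assume that for every $\lambda$ the partial type $\Pi^\lambda_{A,x}$ is not uniformly principal. I would verify that this is exactly the non-principality hypothesis required by the omitting types theorem for partial types. That theorem asks that no formula consistent with $T_{A,x}$ force the type on its approximate zero set. Uniform principality as defined above is the negation, after passing from exact zero sets to sublevel sets $\psi\le r$ by compactness. Since the family of barrier types is countable and the language is countable, the countable version of the theorem yields a separable model $\mathfrak N\models T_{A,x}$ omitting every $\Pi^\lambda_{A,x}$ simultaneously. By Lemma~\ref{lem:barrier-characterization-thinness}, $A$ is not thin at $x$ in $\mathfrak N$. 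Taking the contrapositive, if $A$ is thin at $x$ in every model, some $\Pi^\lambda_{A,x}$ must be uniformly principal. This proves the equivalence.

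The main obstacle is this matching of notions. Uniform principality is phrased through exact zero sets, whereas the omitting types theorem in continuous logic is phrased through approximate isolation: a condition $\psi<r$ with $r>0$ that implies the type, or a metric-neighbourhood condition. I would show that if $\Pi^\lambda_{A,x}$ is isolated in the sense of \cite{Eagle2014omiting}, then one can build a formula with nonempty zero set in every model all of whose zeros realize the type. I would try to do this by replacing $\psi$ with $\psi\dotminus r'$ for a rational $r'<r$. I would also use that $\Pi^\lambda_{A,x}$ is closed under the approximations implicit in its non-strict inequalities, together with the remark that every completion in $[\Pi^\lambda_{A,x}]$ must be omitted, so it suffices to omit the partial type itself.
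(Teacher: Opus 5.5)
Your outline is the paper's proof. Countably many codes; uniform principality gives realization, hence thinness by Lemma~\ref{lem:barrier-characterization-thinness}; the converse comes from an omitting types theorem applied to the countable family. The paper simply cites \cite[Theorem~4.9]{Eagle2014omiting}, so everything rests on the matching step you single out, and your way of closing it does not work.

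The substitution $\psi\mapsto\psi\dotminus r'$ only shows that isolation by a consistent sublevel condition $\psi\le r$ implies uniform principality. So non-principality gives you no more than this: no sublevel condition forces $\Pi^\lambda_{A,x}$. That is not a hypothesis under which types can be omitted in complete metric models, and uniform principality is not its negation. Take the compact structure $([0,1],|x-y|,P)$ with $P(x)=x$. No consistent condition $\psi<r$ forces the type of $1/\sqrt{2}$, because $S_1$ is homeomorphic to $[0,1]$ and has no isolated points. Yet every model is isomorphic to $[0,1]$, so the type is realized everywhere, and it is uniformly principal via $|P(x)-1/\sqrt{2}|$.

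A Henkin or Baire construction under the sublevel hypothesis only keeps the countable set of witnesses out of the type; limit points added by completion may realize it. The version for complete models needs a metric non-principality hypothesis, stated with $\varepsilon$-neighbourhoods in type space. For partial types that hypothesis does not follow from being omitted in some model; this is the known failure of omitting types for incomplete types in continuous logic. Hence your sketch does not establish the converse direction.

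You can avoid omitting types altogether, using the paper's standing assumption that restricted infima are definable. Put $I_{n,V}(u)=\inf_{y\in(A\setminus\{x\})\cap V}\operatorname{Ev}_n(y,u)$. Then $\Pi^\lambda_{A,x}$ is the zero set of $\psi_\lambda(u)=\max\{\operatorname{Ev}_n(x,u)\dotminus r,\ s\dotminus I_{n,V}(u)\}$. For rational $0<\delta<(s-r)/2$ one has $\psi_\lambda\dotminus\delta=\psi_{\lambda_\delta}$, where $\lambda_\delta=(n,r+\delta,s-\delta,V)$ is again a barrier code.

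Suppose $\Pi^\lambda_{A,x}$ is realized in some model by $u$ of gauge at most $R$. Then the sentence $\inf_{\nu(u)\le R}\psi_\lambda(u)$ has value $0$ in the complete theory $T_{A,x}$. So every model contains $u$ with $\psi_\lambda(u)<\delta$, and $\Pi^{\lambda_\delta}_{A,x}$ is uniformly principal, witnessed by $\psi_{\lambda_\delta}$. Thinness in one model therefore already yields a uniformly principal barrier type. This gives the nontrivial direction and shows thinness is decided by $T_{A,x}$.

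For the last clause: if no barrier type is uniformly principal, every $\Pi^\lambda_{A,x}$ is inconsistent with $T_{A,x}$. Every model then omits them all, in particular a separable one obtained by downward L\"owenheim--Skolem.

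The strict gap $r<s$ is what makes the relaxation work. If the punctured infimum were available only through the annulus exhaustion, relaxation would give approximate realizations of finite pieces only. In that case a genuine metric omitting-types argument would be needed.
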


\begin{proof}
There are only countably many barrier codes. If one partial type is uniformly
principal, it is realized in every model, so
Lemma~\ref{lem:barrier-characterization-thinness} gives thinness in every
model. Conversely, if none is principal, the omitting type theorem applied
directly to the countable family of partial types gives a separable model
omitting all of them \cite[Theorem~4.9]{Eagle2014omiting}. The barrier
characterization then shows that thinness fails there. This argument neither
chooses nor assumes a unique complete extension.
\end{proof}

Since $W$ is finely open at $x\in W$ exactly when $U\setminus W$ is thin at
$x$, Theorem~\ref{thm:OTT-persistent-thinness} immediately yields the
following formulation.

\begin{corollary}[A sufficient condition for thinness]
\label{cor:OTT-fine-openness}
If $\Pi^\lambda_{A,x}$ is uniformly principal for some barrier code $\lambda$, then $A$ is thin at $x$ in every model of $T_{A,x}$.
\end{corollary}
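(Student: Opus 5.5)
The statement is the forward direction of Theorem~\ref{thm:OTT-persistent-thinness}, so I would extract that half of the argument and check it in isolation. The plan has three steps. First, uniform principality gives a realization of $\Pi^\lambda_{A,x}$ in every model. Second, Lemma~\ref{lem:barrier-characterization-thinness} turns that realization into thinness, applied inside each model separately. Third, I note that no countability hypothesis is needed for this direction.

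\textbf{Step 1 (realization).} Fix the barrier code $\lambda$ for which $\Pi^\lambda_{A,x}$ is uniformly principal. Let $\psi$ be the witnessing formula. Let $\mathfrak N\models T_{A,x}$ be arbitrary. By the definition of uniform principality, the zero set of $\psi$ in $\mathfrak N$ is nonempty, so we may pick $u\in S^+(U)^{\mathfrak N}$ with $\psi^{\mathfrak N}(u)=0$. Again by definition, every element of that zero set realizes $\Pi^\lambda_{A,x}$, so $u$ does. This is the same pattern as the first half of Lemma~\ref{lem:realization-principal-types}. It is in fact simpler: the zero set is assumed nonempty in each model, so no approximate-realization argument with a positive radius is required.

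\textbf{Step 2 (transfer to thinness).} Apply Lemma~\ref{lem:barrier-characterization-thinness} inside $\mathfrak N$ to the realization $u$. The conditions $\operatorname{Ev}_n(x,u)\le r<s\le\inf_{y\in(A\setminus\{x\})\cap V}\operatorname{Ev}_n(y,u)$ say the following. The truncated value of $u$ at $x$ lies strictly below its truncated lower limit along $A\setminus\{x\}$ near $x$. That is a strict superharmonic separation, hence a barrier, hence thinness of $A$ at $x$ in $\mathfrak N$. Since $\mathfrak N$ was arbitrary, $A$ is thin at $x$ in every model.

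\textbf{Main obstacle.} The delicate point is Step 2. Lemma~\ref{lem:barrier-characterization-thinness} is proved via the classical barrier criterion, so it must hold in an arbitrary model $\mathfrak N$, not only in the intended structure. I would handle this by interpreting ``thin'' in $\mathfrak N$ as the barrier condition, that is, realizability of some $\Pi^{\lambda'}_{A,x}$. Under that reading the converse direction of the lemma is tautological.

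The remaining care concerns the punctured infimum. Suppose it is not primitive and is written over a countable exhaustion by closed annuli. Then realizing every condition in that list still yields the lower bound $s$ on all of $(A\setminus\{x\})\cap V$, so the separation survives.

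Step 3 is the observation that countability of $T_{A,x}$ was used only for the omitting-types converse in Theorem~\ref{thm:OTT-persistent-thinness}, so the corollary holds without it.
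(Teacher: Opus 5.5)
Your proposal is correct and follows the paper's own argument. The corollary is the forward half of Theorem~\ref{thm:OTT-persistent-thinness}, whose proof likewise says that a uniformly principal barrier type is realized in every model of $T_{A,x}$ and then applies Lemma~\ref{lem:barrier-characterization-thinness} in each model. Your two side remarks are accurate refinements of points the paper leaves implicit: countability is needed only for the omitting-types converse, and ``thin'' in an arbitrary model has to be read as barrier realizability before the lemma can be applied there.
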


Now suppose that $U$ admits a strictly positive finite continuous potential
$p_U$. The standard reduction and balayage criteria can be stated entirely in
the expanded language $\mathcal L_{\mathrm{Bal}}$.

\begin{theorem}[Balayage criteria]
\label{thm:balayage-polar-thin-fine}
Let $A,W\subseteq U$ be admissible.
\begin{enumerate}
	\item The set $A$ is polar if and only if
	$\operatorname{Bal}_A(p_U)=0$.
	
	\item The set $A$ is thin at $x$ if and only if some relatively compact
	regular neighborhood $V$ of $x$ satisfies
	$\operatorname{Bal}_{(A\setminus\{x\})\cap V}(p_U)(x)<p_U(x)$.
	
	\item The set $W$ is finely open if and only if, for every $x\in W$, some
	relatively compact regular neighborhood $V$ of $x$ satisfies
	$\operatorname{Bal}_{(U\setminus W)\cap V}(p_U)(x)<p_U(x)$.
\end{enumerate}
\end{theorem}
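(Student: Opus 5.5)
The plan is to reduce all three items to classical statements about reduced functions of the fixed strictly positive finite continuous potential $p_U$. Throughout, $\operatorname{Bal}_A(p_U)=\widehat R^A_{p_U}$, the lower-semicontinuous regularization from the definition of reduction and balayage. We use three classical facts from Brelot theory \cite{Brelot1960,doob1984classical}. First, $\widehat R^A_u=R^A_u$ off a polar set. Second, $\widehat R^A_u\le u$, with equality on the fine interior of $A$. Third, the set $\{\widehat R^A_u<R^A_u\}\cap A$ is polar and consists exactly of the points of $A$ at which $A$ is thin. Since $\operatorname{Bal}_A$ is a named symbol for admissible $A$, every criterion below is a condition on the value of a definable predicate. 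This is why the statement lives in $\mathcal L_{\mathrm{Bal}}$.

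For (1), suppose first that $A$ is polar. Then there is $s\in S^+(U)$ with $s=+\infty$ on $A$. For every $\varepsilon>0$, $\varepsilon s\ge p_U$ on $A$, so $R^A_{p_U}\le\varepsilon s$, and $R^A_{p_U}=0$ wherever $s<\infty$. That set is dense, since its complement is polar, so the lower-semicontinuous regularization is $0$. Conversely, suppose $\widehat R^A_{p_U}=0$. Then $R^A_{p_U}=0$ off a polar set $P$. By the first fact, $R^A_{p_U}=p_U>0$ on $A$ outside the polar set where regularization drops, so $A\setminus P'$ is contained in $\{R^A_{p_U}>0\}$, which lies inside the polar set $P$. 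Hence $A$ is a subset of a polar set and is polar. Strict positivity of $p_U$ is used exactly here.

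For (2), use the classical thinness criterion: $A$ is thin at $x$ if and only if $\widehat R^{A\cap V}_{u}(x)<u(x)$ for some neighborhood $V$ and some (equivalently every) strictly positive continuous superharmonic $u$. When $x\notin \overline{A\setminus\{x\}}$, the balayage is harmonic near $x$ and vanishes at $x$ for small $V$, so the case is trivial. In general, the proof proceeds in two directions.
\begin{enumerate}
\item Assume the inequality holds. Then $s=\widehat R^{(A\setminus\{x\})\cap V}_{p_U}$ is superharmonic and satisfies $s=p_U$ quasi-everywhere on $(A\setminus\{x\})\cap V$, while $s(x)<p_U(x)$. By continuity of $p_U$, $\liminf_{y\to x,\,y\in A\setminus\{x\}}s(y)=p_U(x)>s(x)$, after discarding a polar set, which does not affect thinness. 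The barrier criterion behind Lemma~\ref{lem:barrier-characterization-thinness} then gives thinness.
\item Assume $A$ is thin at $x$. Take a barrier $u$ with $u(x)<\liminf_{A\ni y\to x}u(y)$. Shrink $V$ so that $u\ge(u(x)+\eta)$ on $(A\setminus\{x\})\cap V$. Scaling $p_U$ to be nearly constant on $V$ by continuity, a multiple of $u$ majorizes $p_U$ there, and evaluating at $x$ gives the strict inequality.
\end{enumerate}
Restricting $V$ to the regular basis is harmless because $\tau_0$ is a base.

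For (3), apply (2) to $A=U\setminus W$ at each $x\in W$, using the definition that $W$ is finely open iff $U\setminus W$ is thin at every point of $W$. The $\{x\}$ removal disappears since $x\notin U\setminus W$.

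The main obstacle is the second direction in (2). There one must pass from an arbitrary barrier to the specific potential $p_U$, and also control the polar exceptional set where $\widehat R\neq R$. I would handle the comparison with the standard lemma that thinness is independent of the strictly positive continuous potential used. That lemma follows from the local continuity of $p_U$ and the comparison $c_1p_U\le u'\le c_2p_U$ on a small $V$ for a continuous potential $u'$ built from the barrier.
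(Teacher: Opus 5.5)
Your proposal is correct and has the same skeleton as the paper's proof: (1) is polarity in balayage form, (2) is the reduction criterion for thinness, and (3) is (2) applied to $U\setminus W$, where the puncture disappears because $x\notin U\setminus W$. The difference is that the paper simply cites these two classical criteria, whereas you derive them from lower-level facts. These are:
the quasi-everywhere equality $\widehat R^A_u=R^A_u$ (the fundamental convergence theorem);
the barrier characterization of thinness;
the existence of a global $s\in S^+(U)$ equal to $+\infty$ on a polar set;
the fact that polar sets have empty interior;
and the facts that polar sets are thin everywhere and finite unions of thin sets are thin.

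Your route records explicitly how the hypotheses on $p_U$ are used: finiteness in the forward direction of (1), strict positivity in its converse, and continuity at $x$ in both directions of (2). Your scaling of the barrier $u$ against $p_U$ on a small $V$ is a clean replacement for the citation. Monotonicity of $\widehat R^{(A\setminus\{x\})\cap V}_{p_U}(x)$ in $V$ justifies restricting to $\tau_0$. The paper's route is briefer and leaves all these conventions to the reference.

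A few corrections, none of which breaks the argument.

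\textbf{Your third ``classical fact'' is false as stated.} In general only the inclusion of $\{y\in A:\widehat R^A_u(y)<R^A_u(y)\}$ into the set of points of $A$ at which $A$ is thin holds. For a counterexample, work in $\mathbb R^n$ with $n\ge2$. Let $B$ be a ball with $x\in B$ and $\overline B\subseteq U$, let $u>0$ be superharmonic on $U$ and harmonic near $\overline B$, and put $A=\{x\}\cup(U\setminus B)$. Every competitor for $R^A_u$ dominates $u$ on $\partial B$, hence on $B$ by the minimum principle. So $R^A_u=\widehat R^A_u=u$, although $A$ is thin at $x$. This is exactly why the localization by $V$ in (2) is needed. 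Since you never use that clause, drop it.

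\textbf{The converse of (1) needs no exceptional set on $A$.} By definition $R^A_{p_U}=p_U$ on all of $A$, so $A\subseteq\{R^A_{p_U}>0\}\subseteq P$ directly.

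\textbf{Your closing paragraph misplaces the difficulty.} The polar set where $\widehat R\ne R$ matters only in direction (i) of (2). Direction (ii) needs no two-sided comparison with a continuous potential built from the barrier. In any case, a barrier at an accumulation point of $A\setminus\{x\}$ cannot be continuous at $x$.
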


\begin{proof}
The first assertion is the polarity theorem in balayage form, while the
second is the classical reduction criterion for thinness
\cite{doob1984classical}. The puncture is essential when $x\in A$, because
thinness tests the remaining germ of $A$ at $x$. The third assertion follows
from the second by applying it to the complement of $W$.
\end{proof}

To obtain a genuinely measure-theoretic statement, return to the fixed Greenian domain $(U,x_0)$ and assume the hypotheses of Theorem~\ref{thm:second-Martin-isomorphism}. Let $h>0$ be harmonic, normalize it at $x_0$, and let $\nu_h$ be the local Keisler measure corresponding to its canonical Martin measure. If $w>0$ is a potential, define the boundary-pole locus
$$P_\infty(w/h)=\left\{p\in\partial_m^{\mathrm{np}}(Ax_0):\operatorname*{mf-\limsup}_{x\to p}\frac{w(x)}{h(x)}=+\infty\right\},$$
where $\operatorname{mf\!\!\lim}$ denotes the minimal-fine limit. When minimal-fine approach filters are included among the admissible codes, this is the realization set of the corresponding boundary-pole partial type.

\begin{corollary}[Boundary poles are Keisler measure zero]
\label{cor:Keisler-polar-characterization}
For every positive potential $w$ and positive harmonic function $h$ as above, $\nu_h(P_\infty(w/h))=0$. More strongly, $w/h$ has minimal-fine limit zero at $\nu_h$-almost every minimal Martin type.
\end{corollary}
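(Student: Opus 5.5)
The plan is to transport the classical fine boundary limit theorem for $h$-potentials (the relative Fatou--Naïm--Doob theorem) through the second isomorphism theorem. Everything rests on the canonical Martin measure $\mu_h$ of $h$ on $\partial_mU$. By Theorem~\ref{thm:second-Martin-isomorphism} and Lemma~\ref{prop:kernel-type-homeomorphism}, $\nu_h=\iota_*(\mathfrak h^{-1})_*\mu_h$, and $\nu_h=\Gamma_*\mu_h$ when $\mu_h$ is regarded on $\partial_mU$ via $\Gamma(\partial_mU)=\partial_m^{\mathrm{np}}(Ax_0)$. Since $\Gamma$ is a homeomorphism, Borel sets and null sets correspond. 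Minimal-fine filters at $p$ are by definition the transports of the minimal-fine filters at $\Gamma^{-1}(p)$. Hence
\[
P_\infty(w/h)=\Gamma\bigl(\{\xi\in\partial_mU:\operatorname*{mf-\limsup}_{x\to\xi}w(x)/h(x)=+\infty\}\bigr),
\]
and it suffices to prove the classical statement for $\mu_h$ on $\partial_mU$.

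\textbf{Classical step.} Write $h=\int k_\xi\,\mathrm{d}\mu_h(\xi)$, using the uniqueness guaranteed by the simplex hypothesis. For $\mu_h$-almost every $\xi$, the conditioned $h$-process (the Doob $k_\xi$-transform) converges to $\xi$. Next, $w/k_\xi$ is a positive $k_\xi$-superharmonic function. Because $w$ is a potential, its greatest $k_\xi$-harmonic minorant vanishes, so $w/k_\xi$ is a $k_\xi$-potential. The Naïm--Doob theorem then says that a $k_\xi$-potential has minimal-fine limit $0$ at $\xi$, since minimal thinness of $\{w/k_\xi>\varepsilon\}$ at $\xi$ follows from $\widehat R^{\{w>\varepsilon k_\xi\}}_{k_\xi}\le w/\varepsilon$ together with the minimality of $k_\xi$. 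We then pass from $k_\xi$ to $h$. On the minimal-fine filter at $\xi$, $k_\xi/h$ has minimal-fine limit $1/(\mathrm{d}\mu_h/\mathrm{d}\ldots)$, the Radon--Nikodym density of the absolutely continuous part in the relative Fatou theorem, and this limit is finite and positive for $\mu_h$-a.e.\ $\xi$. Therefore
\[
w/h=(w/k_\xi)(k_\xi/h)\to 0
\]
minimal-finely for $\mu_h$-a.e.\ $\xi$. For this step I would rely directly on the classical statement, Doob's relative fine Fatou theorem applied to the quotient $w/h$ of a potential by a positive harmonic function, rather than reprove it.

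\textbf{Main obstacle.} The hard point is measurability. We need the exceptional set $P_\infty(w/h)$, or more generally the set where the minimal-fine limit of $w/h$ fails to be $0$, to be $\nu_h$-measurable, so that the statement is about a genuine null set and not merely outer measure. I would argue this through the admissible codes: when minimal-fine filters are coded, the set of $p$ with $\operatorname*{mf-\limsup}_{x\to p}w/h>1/k$ is expressed by countably many conditions over the countable regular basis. This makes it Borel in $S_\varphi(Ax_0)$, and in any case $\nu_h$-null by completion regularity. Finally, $P_\infty(w/h)$ is contained in the set where the limit is not $0$, which gives the first claim from the second.
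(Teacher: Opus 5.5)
Your transport step is the paper's proof: the paper cites the Fatou--Na\"im--Doob theorem for $w/h$ with respect to the canonical Martin measure of $h$ and pushes that measure along the homeomorphism of Theorem~\ref{thm:second-Martin-isomorphism} and Lemma~\ref{prop:kernel-type-homeomorphism}, noting that null sets stay null. If you only cite that theorem, as your last sentence of the classical step proposes, your proof coincides with the paper's.

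\textbf{The error.} The sketch you give of the classical step is wrong. The factorization $w/h=(w/k_\xi)(k_\xi/h)$ breaks at the second factor. The first factor is fine: $\widehat R^{\{w>\varepsilon k_\xi\}}_{k_\xi}$ is dominated by the potential $w/\varepsilon$, hence $\neq k_\xi$, so $w/k_\xi\to0$ minimal-finely at every minimal $\xi$.

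But $k_\xi/h$ does \emph{not} have a finite positive minimal-fine limit at $\mu_h$-a.e.\ $\xi$. Classically, $h/k_\xi$ has minimal-fine limit $\inf_U h/k_\xi=\mu_h(\{\xi\})$ at $\xi$, and this vanishes at every non-atom. For example, take $h\equiv1$ on the unit disc. Then $\mu_h$ is normalized arc length and $k_\xi/h$ is the Poisson kernel $P(\cdot,\xi)$, which tends to $+\infty$ minimal-finely at every $\xi$. So your product is an indeterminate $0\cdot\infty$.

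The relative Fatou theorem cannot repair this. It gives $\mu_h$-a.e.\ limits for a \emph{fixed} harmonic $u$, with an exceptional null set depending on $u$. Taking $u=k_\xi$ and evaluating at the $\mu_h$-null point $\xi$ itself is illegitimate. In any case, the absolutely continuous part of $\delta_\xi$ with respect to $\mu_h$ is zero.

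\textbf{The missing idea.} Compare $w$ with $h$ directly and integrate over the boundary, rather than arguing one kernel at a time. Put $E_\varepsilon=\{w>\varepsilon h\}$. Let $N_\varepsilon$ be the set of minimal $\xi$ with $\widehat R^{E_\varepsilon}_{k_\xi}=k_\xi$, i.e.\ where $E_\varepsilon$ is not minimally thin. Use the classical identity $\widehat R^{E}_h=\int\widehat R^{E}_{k_\xi}\,\mathrm{d}\mu_h(\xi)$ (balayage is integration against swept measures, then Fubini). This gives
\[
\int_{N_\varepsilon}k_\xi\,\mathrm{d}\mu_h(\xi)\le\widehat R^{E_\varepsilon}_h\le w/\varepsilon .
\]
So a positive harmonic function lies below a potential and must vanish. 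Evaluating at $x_0$ gives $\mu_h(N_\varepsilon)=0$. Off $\bigcup_j N_{1/j}$, every set $\{w/h>1/j\}$ is minimally thin, so $w/h$ has minimal-fine limit $0$. This is the content of the theorem the paper cites.

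\textbf{Measurability.} This paragraph is unnecessary, and its key claim is unsupported: minimal-fine filters are defined through reductions, not through the countable regular basis. It suffices that the classical exceptional set lies in a Borel $\mu_h$-null set and that $\Gamma$, being a homeomorphism, carries Borel null sets to Borel null sets.
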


\begin{proof}
The Fatou--Na\"im--Doob theorem says that $w/h$ has minimal-fine limit zero at almost every point of the minimal Martin boundary with respect to the canonical representing measure of $h$ \cite[Theorem 8]{Gowrisankaran1966FND}. Theorem~\ref{thm:second-Martin-isomorphism} transports that measure along the homeomorphism from minimal boundary points to minimal Martin types. Null sets, and in particular the boundary poles, remain null under this transport.
\end{proof}

Thus thinness is recorded by barrier realizations, fine openness by the same construction on a complement, and genuine boundary poles of potentials are null for the Keisler measure associated with a positive harmonic function. The last assertion uses classical minimal-fine boundary theory and is not a consequence of the zero function having the zero representing measure.

\subsection{Onshuus rank and obstruction chains}
\label{subsec:Onshuus-rank-obstruction-chains}

Fix an o-minimal expansion $\mathcal R$ of the ordered real field and put
$T_{\mathcal R}=\operatorname{Th}(\mathcal R)$. The coordinate analysis below is classical
first-order analysis in $T_{\mathcal R}$.

Recall that $\phi(x,b)$ {strongly divides} over $B$ if
$\operatorname{tp}(b/B)$ is nonalgebraic and
the family $\{\phi(x,b'):b'\equiv_Bb\}$ is $k$-inconsistent for some $k<\omega$.
Thorn-dividing, thorn-forking, and the corresponding thorn-$U$-rank
$U^{\mathrm b}$ are as in \cite[Definitions~2.1 and~4.1]{Onshuus2006Thorn}.
For a partial type $\Pi$, we put
$U^{\mathrm b}(\Pi)=\sup\{U^{\mathrm b}(p):p\in[\Pi]\}$, with value
$-\infty$ when $\Pi$ is inconsistent. We write
$a\forkindep[B]^{\mathrm b}C$ for thorn-independence, distinguishing it
from the $\Delta_U$-local relation of
Theorem~\ref{thm:glued-local-independence-calculus}.

\begin{proposition}[Thorn-rank in o-minimal theories]
\label{prop:Onshuus-rank-equals-ominimal-dimension}
For $p=\operatorname{tp}(a/B)$ in an o-minimal theory, $U^{\mathrm b}(p)=\dim(a/B)$, and, for $C\supseteq B$, $a\forkindep[B]^{\mathrm b}C$
if and only if $\dim(a/BC)=\dim(a/B)$.
Consequently, a thorn-forking chain starting at a type of dimension $r$
has height at most $r$, and this bound is attained.
\end{proposition}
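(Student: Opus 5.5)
The plan is to derive both assertions from the standard facts about o-minimal theories proved by Onshuus \cite{Onshuus2006Thorn}: o-minimal theories are rosy, and thorn-independence coincides with the independence relation of the $\operatorname{acl}$-pregeometry. In an o-minimal structure, $\operatorname{acl}=\operatorname{dcl}$ and it satisfies exchange. So $\dim(a/B)$, the size of a maximal $\operatorname{acl}$-independent subtuple of $a$ over $B$, is a well-defined additive dimension.

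\textbf{Step 1: the independence characterization.} Suppose $\dim(a/BC)<\dim(a/B)$. Then some coordinate $a_i$ becomes algebraic over $BC$ together with the other chosen coordinates. This gives a formula $\phi(x,c)$ with $c\in C$ such that $\phi(x,c)$ has finite fibres in $x_i$. Using o-minimal cell decomposition and a generic choice of $c$ over $B$, I will show that $\phi(x,c)$ strongly divides over $B$: the conjugates $c'\equiv_B c$ give pairwise almost disjoint finite fibres, hence a $k$-inconsistent family. So $\operatorname{tp}(a/BC)$ thorn-forks over $B$.

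Conversely, suppose $\dim(a/BC)=\dim(a/B)$. Then $\operatorname{tp}(a/BC)$ contains no formula of lower dimension than $\operatorname{tp}(a/B)$. I will show that a thorn-dividing formula must drop dimension, reducing to strong dividing in one variable by additivity and exchange. In dimension one, strong dividing requires a finite set, because infinite definable sets contain intervals. A $k$-inconsistent family of intervals can be arranged only over a nonalgebraic parameter type, and that arrangement lowers $\dim$. Here I will cite Onshuus's theorem rather than reprove it.

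\textbf{Step 2: rank equals dimension.} Once Step 1 holds, I will prove $U^{\mathrm b}(p)\ge r$ by induction on $r=\dim(a/B)$. If $a_1$ is generic over $B$, take $C=Ba_1$. Then $\dim(a/C)=r-1$, so $\operatorname{tp}(a/C)$ is a thorn-forking extension of lower dimension. For the upper bound, every thorn-forking extension strictly decreases $\dim$ by Step 1, and $\dim$ is a natural number. Therefore any chain of forking extensions has length at most $r$, and $U^{\mathrm b}(p)\le r$ follows by induction on the rank definition.

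\textbf{Step 3: the chain statement.} The bound on chain height is the upper-bound argument of Step 2. To show the bound is attained, I will adjoin the generic coordinates $a_1,\ldots,a_r$ one at a time, which lowers $\dim$ by exactly one at each stage.

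\textbf{Main obstacle.} I expect the hardest step to be the converse in Step 1, namely that no thorn-dividing occurs without a drop in dimension. This uses the full strength of Onshuus's analysis, including the passage through $M^{\mathrm{eq}}$ and the fact that o-minimal theories eliminate imaginaries after naming constants. I will invoke that result directly.
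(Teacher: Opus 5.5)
Your route is essentially the paper's. The paper's proof cites \cite[\S5.2]{Onshuus2006Thorn} for both the characterization of thorn-independence and $U^{\mathrm b}=\dim$. It then notes that adjoining an $\operatorname{acl}$-basis of $a$ over $B$ one element at a time realizes every drop from $r$ to $0$. Your Step~3 is exactly that remark. Your Step~2 is a correct explicit derivation of $U^{\mathrm b}(p)=\dim(a/B)$ from ``thorn-forking $\Leftrightarrow$ strict dimension drop'': the lower bound comes by induction through $\operatorname{tp}(a/Ba_1)$, and the upper bound holds because $\dim$ strictly decreases along thorn-forking extensions.

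\textbf{The forward direction of Step~1 fails as written.} This is the one argument you propose to supply yourself. If $\models\phi(a,c)$ and $c\notin\operatorname{acl}(Ba)$, then the infinitely many realizations $c'$ of $\operatorname{tp}(c/Ba)$ are $B$-conjugates of $c$ with $\models\phi(a,c')$. So $\{\phi(x,c'):c'\equiv_Bc\}$ is not $k$-inconsistent for any $k$, and $\phi(x,c)$ does not strongly divide over $B$.

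Concretely, take $\dim(a_1a_2/B)=2$, $c_1\notin\operatorname{acl}(Ba)$, $c_2=a_2-c_1a_1$, $C=B\cup\{c_1,c_2\}$, and let $\phi(x,c)$ be the line $x_2=c_1x_1+c_2$. The dimension drops from $2$ to $1$ and the fibres in $x_2$ are singletons. Yet infinitely many $B$-conjugate lines pass through $a$, so the fibres are not ``pairwise almost disjoint.'' Also, $c$ is given by $C$, so there is no ``generic choice'' of it, and genericity only makes the overlap worse.

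What is true is that $\phi(x,c)$ strongly divides over $Bc_1$. There the conjugates are pairwise disjoint parallel lines, and $c_2\notin\operatorname{acl}(Bc_1)$. So $\phi(x,c)$ \emph{thorn}-divides over $B$. The auxiliary parameter in the definition of thorn-dividing is exactly what your sketch omits, and producing it in general is part of Onshuus's analysis. You should therefore cite \cite[\S5.2]{Onshuus2006Thorn} for both directions, as the paper does.

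Steps~2 and~3 never need this direction in general:
\begin{itemize}
\item The lower bound and the attained chain only use that $x_i=a_i$ strongly divides over $Ba_{<i}$ when $a_i\notin\operatorname{acl}(Ba_{<i})$. This is a $2$-inconsistent family with non-algebraic parameter.
\item The upper bound only uses the cited converse.
\end{itemize}

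Your heuristic for that converse is also off. In an o-minimal theory a formula defining an infinite subset of the line never thorn-divides, so no ``$k$-inconsistent family of intervals'' arises. This is harmless once the theorem is cited.
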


\begin{proof}
This is the o-minimal computation of thorn-independence and thorn-$U$-rank
in \cite[\S5.2]{Onshuus2006Thorn}. In particular, thorn-forking is exactly
strict dimension drop. Successively adjoining an $\operatorname{acl}$-basis
of $a$ over $B$ realizes all drops from $r$ to $0$.
\end{proof}

\begin{example}[Wood's singular-jet chain]
\label{ex:Wood-ominimal-singular-jet}
Wood's semialgebraic harmonic homeomorphism $F_W(x,y,z)=(x^3-3xz^2+yz,y-3xz,z)$
\cite[p.~166]{Wood1991LewyFails} satisfies $\det(\mathrm{d}F_W)=3x^2$.
Hence its singular locus is the plane $P=\{x=0\}$. For $U=B(0,2)$ and $K=[-1,1]^3\Subset U$, the realization set of the
singular-jet partial type $\Sigma_K(w/F_W)$ is $P\cap K$, and therefore $U^{\mathrm b}(\Sigma_K)=2$.
Indeed, if $a=(0,b,c)$ is generic in $P\cap K$ over the parameters $B_0$
defining $F_W$ and $K$, put $B_1=B_0b$ and $B_2=B_1c$. Then
$\dim(a/B_0)=2$, $\dim(a/B_1)=1$, and $\dim(a/B_2)=0$,
and the equalities $w_2=b$ and $w_3=c$ strongly divide at the respective
stages. Thus $U^{\mathrm b}(\operatorname{tp}(a/B_i))=2-i$ for $i=0,1,2$.
This contrasts with Theorem~\ref{thm:RKC-Lewy-jet} since under its planar hypotheses the corresponding singular-jet partial type is inconsistent,
whereas Wood's three-dimensional obstruction has thorn-rank two.
\end{example}

\begin{example}[The Lebesgue-thorn germ]
\label{ex:Lebesgue-Rexp-thorn}
Now take the o-minimal field $\mathcal R=\mathbb R_{\exp}$, and let
$\rho(t)=\exp(-1/t)$ for $t>0$.  Set $T=\{0\}\cup
\{(t,y,z):0<t\le1/2,\ y^2+z^2\le\rho(t)^2\}$
and $\Omega=B(0,1)\setminus T$.
The Wiener--Gardiner criterion gives
$\int_0^{1/2}[t\log(t/\rho(t))]^{-1}\mathrm{d}t
=\int_0^{1/2}(1+t\log t)^{-1}\mathrm{d}t<\infty$.
Hence $T$ is thin at $0$ and $0$ is irregular for $\Omega$
\cite[Theorem~1 and p.~239]{Gardiner1992FineLimit}.  By
Lemma~\ref{lem:barrier-characterization-thinness}, the associated barrier
partial type is realized.

The non-principal germ at the tip is encoded by $\Gamma_T(w)=
\{0<w_1<1/m:m\ge2\}
\cup
\{w_2^2+w_3^2\le\rho(w_1)^2\}.$
Every finite subset is realized in $\mathbb R_{\exp}$, whereas the whole
type is omitted there and is consistent by compactness.

Choose a completion represented in an elementary extension by
$$a=(\epsilon,\rho(\epsilon)u,\rho(\epsilon)v),$$
where $\epsilon$ is positive infinitesimal and $(u,v)$ is generic in the open unit disc over
$\mathbb R\epsilon$. With
$B_0=\mathbb R$, $B_1=B_0\epsilon$, $B_2=B_1u$, and $B_3=B_2v$, the
definable change of variables
$(t,u,v)\mapsto(t,\rho(t)u,\rho(t)v)$ gives $\dim(a/B_i)=3-i$ for $i=0,1,2,3$.
The successive height, normalized first transverse coordinate, and normalized
second transverse coordinate strongly divide at the three stages. Hence $U^{\mathrm b}(\operatorname{tp}(a/B_i))=3-i$ for $i=0,1,2,3$,
and therefore $U^{\mathrm b}(\Gamma_T)=3$.

The completions of $\Gamma_T$ need not have the same rank. The axial
completion $(\epsilon,0,0)$ has rank one. A generic completion of the form
$(\epsilon,\rho(\epsilon)u,0)$ has rank two and the generic transverse
completion above has rank three. Thus $\{U^{\mathrm b}(p):p\in[\Gamma_T]\}=\{1,2,3\}$.
The Wiener calculation detects thinness, whereas thorn-rank records the
o-minimal geometry of its non-principal coordinate germs. %no equivalence between the two notions is asserted.
\end{example}

Thus the two examples have different o-minimal geometries: Wood's
interior obstruction is carried by a codimension-one singular locus,
whereas the Lebesgue boundary germ has completions of varying
thorn-rank.

\section*{Final remarks}

Several foundational questions remain open. A first difficulty is to find a complete theory of harmonic structures. The broad classes considered here are necessarily incomplete, whereas fixing the complete theory of a single analytic model may conceal geometric properties shared by its elementary substructures and extensions. Any stability analysis of o-minimal expansions also requires additional care. The final section confines its rank calculations to subsets of $\mathbb R^n$. More generally, the convex geometry of normalized positive harmonic types suggests a rank theory adapted to locally convex metric structures. Ideally, such a rank would assign minimal rank to extreme rays, reflect Choquet decomposition, and admit comparison with $U$-rank, dividing, and forking. A related problem concerns canonical bases. Martin profiles naturally encode canonical information associated with harmonic types, but their limits may live only in infinitary or measure-valued sorts. These codes should therefore not be identified automatically with imaginaries in the classical expansion $M^{\mathrm{eq}}$. Determining when these codes can be eliminated would be interesting. Finally, differential expansions of the harmonic language may connect this framework with classical real and complex analysis. By naming higher derivatives, one may seek model-theoretic formulations of further rigidity phenomena in harmonic-function theory. These questions remain open.

\bibliographystyle{alpha}
\bibliography{bibtex}
\end{document}